\newcommand{\om}{\omega}
\newcommand{\ba}{\mathcal{K}} 
\newcommand{\ga}{\mathcal{G}}
\newcommand{\fg}{\frak g}
\newcommand{\fk}{\frak k}
\newcommand{\fl}{\frak l}
\newcommand{\fh}{\frak h}
\newcommand{\fn}{\frak n}
\newcommand{\fm}{\frak m}
\newcommand{\fs}{\frak s}
\newcommand{\Ad}{{\rm Ad}}
\newcommand{\Aut}{{\rm Aut}}
\newcommand{\id}{{\rm id}}
\newcommand{\conj}{{\rm conj}}
\newcommand{\ad}{{\rm ad}}
\newtheorem{thm}{Theorem}[section]
\newtheorem{prop}[thm]{Proposition}
\newtheorem{lem}[thm]{Lemma}
\newtheorem{cor}[thm]{Corollary}
\theoremstyle{definition}
\newtheorem{def*}{Definition}[]
\theoremstyle{remark}
\newtheorem{rem}[thm]{Remark}
\newtheorem*{exam}{Example}
\begin{document}
\title{Cartan geometries modeled on skeletons and morphisms induced by extension functors}
\author{Jan Gregorovi\v c}
\address{E.\v Cech  Institute,  Mathematical  Institute  of Charles  University,  Sokolovsk\'a 83, Praha 8 - Karl\'in, Czech Republic}
 \email{jan.gregorovic@seznam.cz}
 \thanks{The author was supported by the Grant agency of the Czech republic under the grant
P201/12/G028. I would like to thank J. Slov\'ak and L. Zalabov\' a for their suggestions and reading of the preliminary version of this article.}
\keywords{Categories of Cartan geometries; skeletons; extension functors; automorphisms; homothety; equivalence of Riemannian metrics}
\subjclass[2010]{53C05; 	53C30,53C29 }

\begin{abstract}
The extension functors between categories of Cartan geometries can be used to define different categories of Cartan geometries with additional morphisms. The Cartan geometries modeled on skeletons can be used for the description of such categories of Cartan geometries and therefore we develop the theory of Cartan geometries modeled on skeletons, in detail.  In particular, we show that there are functors from the categories of Cartan geometries with morphisms induced by extension functors to certain categories of Cartan geometries modeled on skeletons with the usual morphisms. We study how to compute (infinitesimal) automorphisms of Cartan geometries modeled on skeletons and how to compute the morphisms induced by the extension functors for particular Cartan geometries. Some examples and applications of the morphisms induced by extension functors are presented in the setting of the Riemannian geometries.
\end{abstract}
\maketitle
\tableofcontents

\section*{Introduction}

There are several possibilities, how to set up the theory of Cartan geometries. In this article, we will consider only finite dimensional Lie groups, algebras and their representations over real numbers and Cartan geometries on connected smooth manifolds. Then there the following algebraic objects that can be used as the models for the Cartan geometries.

In \cite{S}, the author models Cartan geometries on the infinitesimal Klein geometries $(\fg,\fh)$ assuming that there is triple $(\fg,H,\Ad)$, where $\fg$ is a Lie algebra with Lie subalgebra $\fh$, $H$ is a Lie group with Lie algebra $\fh$ and $\Ad$ is a representation of $H$ in the automorphism group $\Aut(\fg)$ of the Lie algebra $\fg$ extending the adjoint representation $\Ad_H$ of $H$ on $\fh$. In general, we will us the notation $\Ad_G$ for the adjoint representation of $G$ on its Lie algebra $\fg$ in this article.  

In \cite{parabook}, the authors model Cartan geometries on Klein geometries $(G,H)$ for a Lie group $G$ with closed Lie subgroup $H$. 

In \cite{Mor}, the author models Cartan geometries on the following objects, which we will need to use in this article.

\begin{def*}\label{defskel}
We say that $(\fk,L,\rho)$ is a \emph{skeleton} if

\begin{enumerate}
\item $L$ is a Lie group with Lie algebra $\fl$,
\item $\fk$ is a vector space with vector subspace $\fl$,
\item and $\rho$ is a representation of $L$ on $\fk$ such that $\rho|_\fl=\Ad_L$.
\end{enumerate}
\end{def*}

Since the triples $(\fg,H,\Ad)$ corresponding to the infinitesimal Klein geometries $(\fg,\fh)$ and the triples $(\fg,H,\Ad_G|_H)$ corresponding to the Klein geometries $(G,H)$ are skeletons, we can uniformly define the Cartan geometries in the following way.

\begin{def*}\label{defcar}
Let $(\fk,L,\rho)$ be a skeleton. We say that $(\ba\to M,\om)$ is a \emph{Cartan geometry of type $(\fk,L,\rho)$} if

\begin{enumerate}
\item $\ba\to M$ is a principal $L$--bundle over the smooth connected manifold $M$, where we denote by $r^l$ the right action of $l\in L$ on $\ba$ and by $\zeta_X$ the fundamental vector field of $X\in \fl$ on $T\ba$,
\item and $\om$ is a $\fk$--valued one form on $\ba$ such that $$(r^l)^*(\om)=\rho(l)^{-1}\circ \om$$ holds for all $l\in L$,  $\om(\zeta_X)=X$ holds for all $X\in \fl$ and $\om(u): T_u\ba\to \fk$ is a linear isomorphism for all $u\in \ba$.
\end{enumerate}

We say that $\om$ is a \emph{Cartan connection of type $(\fk,L,\rho)$}.
\end{def*}

In \cite{parabook} and \cite{S}, the categories of Cartan geometries are defined using the following morphisms.

\begin{def*}\label{defcar}
Let $(\ba\to M,\om)$ and $(\ba'\to M,'\om')$ be Cartan geometries of type $(\fk,L,\rho)$. We say that a $L$--bundle morphism $\Phi: \ba\to \ba'$ is a \emph{morphism of Cartan geometries of type $(\fk,L,\rho)$} if $$(\Phi)^*(\om')=\om.$$ 

We denote by $\mathcal{C}^{(\fk,L,\rho)}$ the category of Cartan geometries of type $(\fk,L,\rho)$ with these morphisms of Cartan geometries of type $(\fk,L,\rho)$.
\end{def*}

Clearly, the categories of Cartan geometries of type $(G,H)$ or $(\fg,\fh)$ (defined in \cite{parabook} and \cite{S}) are equivalent to the categories $\mathcal{C}^{(\fg,H,\Ad_G|_H)}$ or $\mathcal{C}^{(\fg,H,\Ad)}$, respectively. In the section \ref{sec1}, we review the basic properties of Cartan geometries of type $(\fk,L,\rho)$ and discuss which properties of the Cartan geometries modeled on (infinitesimal) Klein geometries are forgotten by considering them as Cartan geometries modeled on skeletons.

The crucial objects for the definition of different categories of Cartan geometries are extensions between skeletons.

\begin{def*}\label{defext}
We say that $(\alpha,i)$ is a \emph{morphism of the skeleton $(\fg,H,\sigma)$ to the skeleton $(\fk,L,\rho)$} if

\begin{enumerate}
\item $i$ is a Lie group homomorphism $H\to L$,
\item $\alpha$ is a linear map $\fg\to \fk$ extending $di: \fh \to \fl$ such that $$\alpha\circ \sigma(h)=\rho(i(h))\circ\alpha$$ holds for all $h\in H$.
\end{enumerate}

We say that a morphism $(\alpha,i)$ of $(\fg,H,\sigma)$ to $(\fk,L,\rho)$ is an \emph{extension of the $(\fg,H,\sigma)$ to $(\fk,L,\rho)$} if $\alpha$ provides isomorphism of $\fg/\fh$ and $\fk/\fl$.

We say that an extension $(\alpha,i)$ is \emph{injective, bijective}, if $i$ is injective, bijective, respectively.
\end{def*}

There is a natural source of extensions in the case of Klein geometries. If $G$ is a Lie subgroup of Lie group $K$ and $L$ is closed subgroup of $K$ such that $G$ acts transitively on $K/L$ and if we denote by $\iota$ the natural inclusion $G\to K$, then the pair $(d\iota,\iota|_{G\cap L})$ is extension of $(\fg,G\cap L,\Ad_G|_{G\cap L})$ to $(\fk,L,\Ad_{K}|_L)$.

The extensions between skeletons corresponding to Klein geometries provide extension functors (described in \cite[Section 1.5.15.]{parabook}) between the corresponding categories of Cartan geometries. We show (see Theorem \ref{cartoprinc}) that the extensions $(\alpha,i)$ of $(\fg,H,\sigma)$ to $(\fk,L,\rho)$ provide extension functors $$\mathcal{F}_{(\alpha,i)}: \mathcal{C}^{(\fg,H,\sigma)}\to \mathcal{C}^{(\fk,L,\rho)}.$$

There is the following formula for the composition of morphisms $\Phi,\Phi'$ with the extension functors $\mathcal{F}_{(\alpha,i)},\mathcal{F}_{(\alpha',i')}$ in/between the appropriate categories of Cartan geometries:
$$(\Phi \circ \mathcal{F}_{(\alpha,i)})\circ(\Phi'\circ \mathcal{F}_{(\alpha',i')})=(\Phi\circ \mathcal{F}_{(\alpha ,i)}(\Phi'))\circ (\mathcal{F}_{(\alpha\circ \alpha',i\circ i')}).$$
Therefore the following category of Cartan geometry with additional morphisms induced by the extension functors is well--defined.

\begin{def*}\label{symdef0}
The category $\mathcal{C}$ of Cartan geometries is a category of all Cartan geometries modeled on all skeletons with morphisms that are all possible compositions $\Phi \circ \mathcal{F}_{(\alpha,i)}$ of morphisms $\Phi$ of Cartan geometries modeled on skeletons with extension functors $\mathcal{F}_{(\alpha,i)}$.
\end{def*}

It is convenient to use the following equivalent (see Proposition \ref{altdef}) definition of (iso)morphisms in the category $\mathcal{C}$.

\begin{def*}\label{symdef2}
Let $(\alpha,i)$ be an extension of $(\fg,H,\sigma)$ to $(\fk,L,\rho)$, let $(\ga\to I,\gamma)$ be a Cartan geometry of type $(\fg,H,\sigma)$ and let $(\ba\to M,\om)$ be a Cartan geometry of type $(\fk,L,\rho)$. We say that a principal bundle morphism $\Phi:\ga\to \ba$ over $i: H\to L$ is a \emph{$(\alpha,i)$--morphism from $(\ga\to I,\gamma)$ to $(\ba\to M,\om)$} if $$(\Phi)^*(\om)=\alpha\circ \gamma.$$

We say that a $(\alpha,i)$--morphism $\Phi$ is a \emph{$(\alpha,i)$--isomorphism} if both $(\alpha,i)$ and $\Phi$ are bijective.
\end{def*}

In particular, we show (see Proposition \ref{altdef}) that the bijective extensions between different skeletons provide equivalences of the corresponding categories. This means that $(\alpha,i)$--isomorphisms for the bijective extensions of $(\fk,L,\rho)$ to $(\fk,L,\rho)$ are equivalences of Cartan geometries in the category $\mathcal{C}^{(\fk,L,\rho)}$. Therefore as we show on in the section \ref{sec6}, the $(\alpha,i)$--isomorphisms can be applied for resolving the problem of (global) equivalence of geometric structure related to Cartan geometries of type $(\fk,L,\rho)$. Clearly, the bijective extensions of $(\fk,L,\rho)$ to $(\fk,L,\rho)$ form a group and we will use the following notation.

\begin{def*}
We denote by $Ext(\fk,L,\rho)$ the group of bijective extensions of $(\fk,L,\rho)$ to $(\fk,L,\rho)$,
\end{def*}

The Proposition \ref{extcomp} shows, how to compute the group $Ext(\fk,L,\rho)$ as a subgroup of $Gl(\fk)\times \Aut(L)$ for a large class of skeletons.

There are many possibilities, how to extend the categories $\mathcal{C}^{(\fk,L,\rho)}$ inside the category $\mathcal{C}$ of Cartan geometries. These possibilities depend on the choice of the subgroup $\mathcal{S}$ of the group $Ext(\fk,L,\rho)$, because the particular choices of $\mathcal{S}$ can provide additional properties for the morphisms in these categories. For example, the choice $\mathcal{S}=\{(\alpha,\id)\in Ext(\fk,L,\rho)\}$ provides a category with all $(\alpha,\id)$--morphisms that are $L$--bundle morphisms.

\begin{def*}\label{symdef1}
Let $\mathcal{S}$ be a subgroup of the group $Ext(\fk,L,\rho)$. We say that $(\alpha,i)$--morphisms for $(\alpha,i)\in \mathcal{S}$ are \emph{$\mathcal{S}$--morphisms} and denote by $\mathcal{SC}^{(\fk,L,\rho)}$ the category of Cartan geometries of type $(\fk,L,\rho)$ with $\mathcal{S}$--morphisms.

We denote by $\mathcal{S}\Aut(\ba,\om)$ the group of all $(\alpha,i)$--automorphisms of $(\ba\to M,\om)$ for all $(\alpha,i)\in \mathcal{S}$ and we say that $\mathcal{S}\Aut(\ba,\om)$ is an \emph{$\mathcal{S}$--automorphisms group of $(\ba\to M,\om)$}.
\end{def*}

Let us point out that $\{(\id,\id)\}\mathcal{C}^{(\fk,L,\rho)}=\mathcal{C}^{(\fk,L,\rho)}$ holds for the trivial choice $\mathcal{S}=\{(\id,\id)\}$. In particular, the map $\mathcal{S}\Aut(\ba,\om)\to \mathcal{S}$ assigning to an $(\alpha,i)$--automorphism the extension $(\alpha,i)$ is a group homomorphism with kernel $\Aut(\ba,\om)$, i.e., the Lie group of automorphisms $\Aut(\ba,\om)$ of $(\ba\to M,\om)$ is a normal subgroup of the group $\mathcal{S}\Aut(\ba,\om)$.

For skeletons $(\fg,H,\Ad_G|_H)$ corresponding to Klein geometries $(G,H)$, there are several distinguished choices of the subgroup $\mathcal{S}$ of $Ext(\fg,H,\Ad_G|_H)$. For $(\alpha,i)\in Ext(\fg,H,\Ad_G|_H)$, the map $$R_\alpha(X,Y):=\ad_\fg(\alpha(X))\alpha(Y)-\alpha(\ad_\fg(X)Y)$$ measures for $X,Y\in \fg$ the failure of $\alpha$ to be a Lie algebra automorphism of $\fg$ and we can consider the extensions $(\alpha,i)$ with particular image of $R_\alpha$. For example, the subgroup $$\{(\alpha,i)\in Ext(\fg,H,\Ad_G|_H): Im(R_\alpha)\subset \fh\}$$ corresponds to the extension functors that preserve the torsion--freeness of Cartan geometries of type $(G,H)$. Further, there is the subgroup $$\{(\alpha,i)\in Ext(\fg,H,\Ad_G|_H): \alpha\in \Aut(\fg)\}$$ of extensions for which $Im(R_\alpha)=0$. We will see that the subgroup $\{(d\sigma,\sigma|_H): \sigma\in \Aut(G),\ \sigma(H)\subset H\}$ of $Ext(\fg,H,\Ad_G|_H)$ generates the additional $Ext(\fg,H,\Ad_G|_H)$--automorphisms of the flat model of Cartan geometries of type $(G,H)$, see Corollary \ref{modhom}.

In the section \ref{sec2}, we investigate the properties of the group $\mathcal{S}\Aut(\ba,\om)$ in detail. We show (see Theorem \ref{main1}) that if $\mathcal{S}$ is a Lie group with Lie algebra $\fs$, then there is a fully--faithful functor $\mathcal{F}_{\mathcal{S}}$ (see Definition \ref{defskelpr}) from the category $\mathcal{SC}^{(\fk,L,\rho)}$ to a particular category of Cartan geometries with usual morphisms such that $$\Aut(\mathcal{F}_{\mathcal{S}}(\ba\to M,\om))=\mathcal{F}_{\mathcal{S}}(\mathcal{S}\Aut(\ba,\om)).$$ Therefore $\mathcal{S}\Aut(\ba,\om)$ is a Lie group of dimension at most $dim(\Aut(\ba,\om))+dim(\frak{s})$. Let us point out that even for Cartan geometries modeled on Klein geometries, the functor $\mathcal{F}_{\mathcal{S}}$ ends in the categories of Cartan geometries modeled on skeletons, which is one of the main reasons, why we work with Cartan geometries modeled on skeletons in this article.

In the section \ref{sec5}, we discuss how to compute the automorphism and $\mathcal{S}$--automorphism groups for the Cartan geometries modeled on skeletons using the functor $\mathcal{F}_{\mathcal{S}}$, see Proposition \ref{autcomp0}, and Theorems \ref{autcomp} and \ref{homcomp}.

In the section \ref{sec6}, we show that if we view Riemannian geometries as Cartan geometries of type $(\mathbb{R}^n\oplus \frak{so}(n),O(n),\Ad_{\mathbb{R}^n\rtimes O(n)}|_{O(n)})$, then the homotheties of Riemannian geometries corresponds to $\mathcal{S}$--morphisms of for certain choice of $\mathcal{S}\subset Ext(\mathbb{R}^n\oplus \frak{so}(n),O(n),\Ad_{\mathbb{R}^n\rtimes O(n)}|_{O(n)})$, see Proposition \ref{riehom}. Further, we show that the problem to determine all (globally) non--equivalent of Riemannian metrics with the same Levi--Civita connection can be solved by the computation of certain $\mathcal{S}$--automorphism groups on the holonomy reduction, see Theorems \ref{autext} and \ref{meteq}.

\section{The basic properties of Cartan geometries modeled on skeletons}\label{sec1}

Let us list the main differences and similarities between the properties of Cartan geometries modeled on skeletons and the Cartan geometries modeled on (infinitesimal) Klein geometries. Let us recall that we will denote by $(\fg,H,\Ad_G|_H)$ the skeletons corresponding to Klein geometries $(G,H)$.

\begin{enumerate}
\item The map assigning to a (infinitesimal) Klein geometry the corresponding skeleton is neither injective nor surjective.

\begin{itemize}
\item There are skeletons $(\fk,L,\rho)$, which do not correspond to any (infinitesimal) Klein geometry. For example, consider a Klein geometry $(G,P)$, where $G$ is semisimple and $P$ parabolic subgroup of $G$, and denote by $\fg^i$ the corresponding $P$--invariant filtration of the Lie algebra $\fg$, c.f. \cite{parabook}. If $\fg\neq \fg^{-1}$, then $(\fg^{-1},P,\Ad_G|_P)$ is a skeleton corresponding to no Klein geometry, because any compatible Lie algebra structure on $\fg^{-1}$ is completely determined by the restricted root space decomposition of $\fg$ and thus there is none if $\fg\neq \fg^{-1}$.
\item There are skeletons $(\fk,L,\rho)$, which correspond to several Klein geometries. For example the Klein geometries $(O(n+1),O(n))$, $(O(n,1),O(n))$ and $(\mathbb{R}^n\rtimes O(n),O(n))$ share the same skeleton $(\mathbb{R}^n\oplus \frak{so}(n),O(n),\Ad_{\mathbb{R}^n\rtimes O(n)}|_{O(n)})$. In general, all mutants of infinitesimal Klein geometry $(\fg,\fh)$ (see \cite{S}) share the same skeleton.
\end{itemize}

\item It is clear from point (1) that the main information we are losing by considering the Cartan geometries modeled on skeletons are the notions of curvature and flat model of the Cartan geometry. In fact, these two notions are closely related. If we choose Cartan geometry $(\bar \ba\to \bar M,\bar \om)$ of type $(\fk,L,\rho)$ as our flat model, then we can define curvature of Cartan geometry $(\ba\to M,\om)$ of type $(\fk,L,\rho)$ as
$$\kappa(\om,\bar \om)(X,Y):=d\om(\om^{-1}(X),\om^{-1}(Y))-d\bar \om(\bar \om^{-1}(X),\bar \om^{-1}(Y)).$$

Indeed, the usual curvature of Cartan geometries of type $(G,H)$ is obtained by the choice of the Cartan geometry $(G\to G/H,\om_G)$ as the flat model, where $\om_G$ is the Maurer--Cartan form of $G$.

\item Let us now discuss the trivial morphisms in the category $\mathcal{C}^{(\fk,L,\rho)}$, i.e., the morphisms with the identity as the underlying map. For this, we need the notion of effective skeleton.

\begin{def*}\label{defskelpr}
We say that the maximal normal subgroup $N$ of $L$ such that $(id-\rho(n))\fk\subset \fn$ holds for all $n\in N$ is a \emph{kernel of $(\fk,L,\rho)$}.

We say that a skeleton $(\fk,L,\rho)$ is \emph{effective} if the kernel of $(\fk,L,\rho)$ is trivial.
\end{def*}

Let us prove that there are no trivial morphisms in the category $\mathcal{C}^{(\fk,L,\rho)}$ in the case of effective skeletons.

\begin{thm} \label{rigid}
Let $N$ be the kernel of $(\fk,L,\rho)$ and let $\phi_1$ and $\phi_2$ be two morphisms between two Cartan geometries $(\ba\to M,\om)$ and $(\ba'\to M,'\om')$ of type $(\fk,L,\rho)$ which cover the same base mapping $M\to M'$. Then there is smooth map $\psi: \ba\to N$ to the kernel $N$ of $(\fk,L,\rho)$ such that $\phi_2(u)=\phi_1(u)\psi(u)$ holds for all $u\in \ba$.

In particular, if $(\fk,L,\rho)$ is effective, then $\phi_2=\phi_1$.
\end{thm}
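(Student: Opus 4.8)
The plan is to compare the two morphisms fiberwise. Since $\phi_1$ and $\phi_2$ cover the same base map $M\to M'$, for every $u\in\ba$ the points $\phi_1(u)$ and $\phi_2(u)$ lie in the same fiber of $\ba'\to M'$; hence there is a unique $\psi(u)\in L$ with $\phi_2(u)=\phi_1(u)\psi(u)$. First I would check that $\psi$ is smooth (local triviality of the bundle plus uniqueness) and that it transforms by conjugation under the principal $L$--action: applying $r^l$ to $\phi_2(u\cdot l)=\phi_1(u\cdot l)\psi(u\cdot l)$ and using that $\phi_1,\phi_2$ are $L$--bundle morphisms gives $\psi(u\cdot l)=l^{-1}\psi(u)l$.

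Next I would exploit the two defining properties of a Cartan connection, namely $\phi_j^*\om'=\om$ and the equivariance $(r^l)^*\om'=\rho(l)^{-1}\circ\om'$. Writing $\phi_2 = r^{\psi}\circ\phi_1$ (the "pointwise right translation by $\psi$") and differentiating, one gets for $\xi\in T_u\ba$ a formula of the shape
$$
\om(\xi) \;=\; (\phi_2^*\om')(\xi) \;=\; \rho(\psi(u))^{-1}\bigl(\om(\xi)\bigr) \;+\; \delta\psi(\xi),
$$
where $\delta\psi$ denotes the left (or right) logarithmic derivative of $\psi$, an $\fl$--valued one-form. Since the same identity with $\phi_1$ in place of $\phi_2$ just reads $\om=\om$, subtracting yields
$$
\bigl(\id-\rho(\psi(u))^{-1}\bigr)\bigl(\om(\xi)\bigr) \;=\; -\,\delta\psi(\xi)\ \in\ \fl
\qquad\text{for all }\xi\in T_u\ba.
$$
Because $\om(u)\colon T_u\ba\to\fk$ is a linear isomorphism, $\om(\xi)$ ranges over all of $\fk$ as $\xi$ varies, so this says precisely that $(\id-\rho(\psi(u)))\fk\subset\fl$ for every $u$.

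Now I would feed this into the definition of the kernel $N$ of $(\fk,L,\rho)$. The condition $(\id-\rho(\psi(u)))\fk\subset\fl$ is exactly the pointwise condition, but to conclude $\psi(u)\in N$ I need $\psi(u)$ to lie in a \emph{normal} subgroup satisfying that condition. Here the conjugation equivariance $\psi(u\cdot l)=l^{-1}\psi(u)l$ from the first step does the work: the image of $\psi$ (together with its $L$--conjugates, which are again in the image since $M$ is connected and $\ba\to M$ has connected fibers after passing to the identity component... ) generates a subgroup of $L$ on which $(\id-\rho(n))\fk\subset\fl$ holds and which is normalized by $L$; by maximality this subgroup is contained in $N$. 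I would phrase this cleanly by noting that the subgroup generated by all $\psi(u)$, $u\in\ba$, is $L$--invariant under conjugation, hence its normal closure still satisfies the kernel condition, hence lies in $N$; therefore $\psi(u)\in N$ for all $u$. The effective case $N=\{e\}$ then forces $\psi\equiv e$, i.e. $\phi_2=\phi_1$.

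The main obstacle I anticipate is the bookkeeping in the differentiation step: getting the Leibniz-type formula for $(r^\psi\circ\phi_1)^*\om'$ correct, including the contribution of $d\psi$ through the fundamental vector fields $\zeta_X$ (using $\om'(\zeta_X)=X$), and making sure the logarithmic-derivative term genuinely lands in $\fl$ rather than just in $\fk$ — which it does precisely because it is the pullback of a fundamental vector field's image under $\om'$. A secondary point requiring a little care is the passage from "$(\id-\rho(\psi(u)))\fk\subset\fl$ for each $u$" to "$\psi(u)$ lies in the \emph{maximal normal} such subgroup", which is where connectedness of $M$ and the conjugation-equivariance of $\psi$ are used.
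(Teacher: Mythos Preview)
Your approach is exactly what the paper invokes (it simply cites the analogous proof for Klein geometries, \cite[Proposition~1.5.3]{parabook}): define $\psi$ fiberwise, differentiate $\phi_2=r^{\psi}\circ\phi_1$, and extract a constraint on $\psi$ from $\phi_1^*\om'=\phi_2^*\om'=\om$.

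There is, however, a real gap in your passage to the kernel. The kernel of $(\fk,L,\rho)$ is defined in the paper as the maximal normal subgroup $N\subset L$ with $(\id-\rho(n))\fk\subset\fn$ for all $n\in N$ --- note $\fn$, not $\fl$. Your identity $(\id-\rho(\psi(u))^{-1})\om(\xi)=\pm\,\delta\psi(\xi)$, together with surjectivity of $\om_u$, gives $(\id-\rho(\psi(u))^{-1})\fk=\delta\psi(T_u\ba)$; you note this lies in $\fl$, but calling that ``exactly the pointwise condition'' for the kernel is not correct, and the normal--closure step you sketch does not by itself upgrade $\fl$ to $\fn$. The repair is to sharpen where $\delta\psi$ lands. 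Let $N'$ be the closure in $L$ of the subgroup generated by $\psi(\ba)$; by the conjugation equivariance $\psi(u\cdot l)=l^{-1}\psi(u)l$ that you established, $N'$ is a closed normal, hence embedded Lie, subgroup. Since $\psi$ factors smoothly through $N'$, the logarithmic derivative $\delta\psi$ actually takes values in $\fn'$, so in fact $(\id-\rho(\psi(u)))\fk\subset\fn'$ for every $u$. This condition is stable under products, inverses and limits (using $\rho(n')\fn'=\Ad_L(n')\fn'=\fn'$ by normality), hence holds throughout $N'$; maximality of $N$ then yields $N'\subset N$ and therefore $\psi(\ba)\subset N$.
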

\begin{proof}
The Theorem can be proved by the proof of the analogous statement in the case of Cartan geometries modeled on Klein geometries, \cite[Proposition 1.5.3]{parabook}. Indeed, the proof requires only the skeleton $(\fg,H,\Ad_G|_H)$ instead of the Klein geometry $(G,H)$ to prove the fact that the map $\psi$ has values in group with the defining properties of the kernel $N$.
\end{proof}

\item The characterization of (infinitesimal) automorphisms remains the same. 

\begin{thm}\label{Liegp}
The automorphism group $\Aut(\ba,\om)$ of the Cartan geometry $(\ba\to M,\om)$ of type $(\fk,L,\rho)$ is a Lie group of dimension at most $dim(\fk)$. In particular, the Lie algebra of $\Aut(\ba,\om)$ consists of infinitesimal automorphisms, i.e., the vector fields $\xi$ on $T\ba$ such that $\mathcal{L}_\xi\om=0$ and $(r^l)^*(\xi)=\xi$ hold for all $l\in L$, which are complete.
\end{thm}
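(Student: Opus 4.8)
The plan is to exploit that a Cartan connection $\om$ of type $(\fk,L,\rho)$ is, in particular, an absolute parallelism on the total space $\ba$: by definition $\om(u)\colon T_u\ba\to\fk$ is a linear isomorphism at each point, so $\om$ identifies $T\ba$ with $\ba\times\fk$ and, in particular, $\dim\ba=\dim\fk$. For $X\in\fk$ let $\om^{-1}(X)$ denote the \emph{constant vector field} on $\ba$ with $\om(\om^{-1}(X))=X$; these fields span $T_u\ba$ at every $u$. First I would observe that any $\Phi\in\Aut(\ba,\om)$, satisfying $\Phi^*\om=\om$, has each $\om^{-1}(X)$ as a $\Phi$--invariant field and therefore intertwines the flows $\Fl_t^{\om^{-1}(X)}$; since finitely many concatenated such flow segments reach every point of the connected manifold $\ba$ from a fixed base point $u_0$, the map $\Phi$ is uniquely determined by the single value $\Phi(u_0)$. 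In other words the evaluation $\Phi\mapsto\Phi(u_0)$ embeds $\Aut(\ba,\om)$ injectively into $\ba$. (This rigidity is the infinitesimal counterpart of Theorem \ref{rigid}.)

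With this rigidity available, the Lie group structure on $\Aut(\ba,\om)$ would be obtained exactly as for Cartan geometries of type $(G,H)$ in \cite{parabook} and \cite{S}, namely from the classical theorem of Kobayashi that the group of diffeomorphisms of a connected manifold preserving a given absolute parallelism is a Lie group acting freely on that manifold. Concretely, $\Aut(\ba,\om)$ is the subgroup of the parallelism automorphism group of $(\ba,\om)$ consisting of those diffeomorphisms that are in addition $L$--bundle morphisms; preserving $\om$ already forces $\Phi$ to preserve the fundamental vector fields $\zeta_X=\om^{-1}(X)$ for $X\in\fl$, hence to commute with the flows $r^{\exp tX}$, and the passage from this to full $L$--equivariance — including the effect of a possibly disconnected $L$ — is carried out verbatim as in the Klein case. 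Freeness of the action together with $\dim\ba=\dim\fk$ then gives $\dim\Aut(\ba,\om)\le\dim\fk$.

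For the Lie algebra, I would use that the Lie algebra of the Lie group $\Aut(\ba,\om)$, viewed through its free action on $\ba$, is realized by the fundamental vector fields of that action. A vector field $\xi$ on $\ba$ arises this way precisely when it is complete and its flow is a one--parameter group of automorphisms, i.e. $(\Fl_t^\xi)^*\om=\om$ and $\Fl_t^\xi\circ r^l=r^l\circ\Fl_t^\xi$ for all $t$ and $l\in L$. Differentiating at $t=0$ gives $\mathcal{L}_\xi\om=0$ and $(r^l)^*\xi=\xi$; conversely, starting from these two infinitesimal conditions and differentiating along the flow shows that every $\Fl_t^\xi$, where defined, is an $L$--equivariant automorphism, so a complete such $\xi$ integrates to a one--parameter subgroup of $\Aut(\ba,\om)$ with generator $\xi$. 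Hence the Lie algebra of $\Aut(\ba,\om)$ consists exactly of the complete vector fields $\xi$ with $\mathcal{L}_\xi\om=0$ and $(r^l)^*\xi=\xi$. That these form a finite--dimensional Lie algebra is transparent from the rigidity above: Cartan's formula turns $\mathcal{L}_\xi\om=0$ into the first--order system $\eta\cdot\om(\xi)=-d\om(\xi,\eta)$, so $\xi$ is determined by $\om(\xi)(u_0)\in\fk$, again bounding the dimension by $\dim\fk$.

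The step that will need the most care — and it is exactly the point of the author's remark after Theorem \ref{rigid} — is not a new computation but the verification that nothing in the Klein--model proof genuinely used the Lie group $G$ rather than just the skeleton data: one must check that the parallelism automorphism group really does reduce to the $L$--bundle automorphisms (the equivariance upgrade and closedness of the subgroup) and that the cited Lie--group--from--parallelism theorem applies to $\ba$ with $\om^{-1}$, with $(\fk,L,\rho)$ everywhere playing the role that $(\fg,H,\Ad_G|_H)$ plays in \cite{parabook}. Once this bookkeeping is done, the statement ``remains the same'' as in the Klein case.
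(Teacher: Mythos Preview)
Your approach is exactly the paper's: the author simply cites \cite[Theorem 1.5.11]{parabook} and remarks that the Klein--case proof goes through unchanged, and you have spelled out that very proof (Kobayashi's parallelism argument plus the identification of the Lie algebra). One small point to tidy: $\ba$ need not be connected when $L$ is not, so in the rigidity step you should combine flows of constant vector fields with the $L$--equivariance you already invoke, rather than assert connectedness of $\ba$ outright.
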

\begin{proof}
The Theorem can be proved by the proof of the analogous statement in the case of Cartan geometries modeled on Klein geometries, see \cite[Theorem 1.5.11]{parabook}.
\end{proof}

We will show in the section \ref{sec5}, how to compute the (infinitesimal) automorphisms of the Cartan geometries of type $(\fk,L,\rho)$. In fact, we will work in the category $\mathcal{SC}^{(\fk,L,\rho)}$ in the section \ref{sec5} and the results for the category $\mathcal{C}^{(\fk,L,\rho)}$ will follow for the choice $\mathcal{S}=\{(\id,\id)\}$.

\item There are several classes of morphisms of skeletons that can be used for unified description of natural connections associated to Cartan geometries modeled on skeletons. This description generalizes the results \cite[Theorem 1.5.6.]{parabook} and \cite[Lemma 1.5.15]{parabook} that hold for the Cartan geometries modeled on Klein geometries.

\begin{thm}\label{cartoprinc}
Let $(\alpha,i)$ be a morphisms of the skeleton $(\fg,H,\sigma)$ to the skeleton $(\fk,L,\rho)$ and let $(\pi: \ga\to M,\gamma)$ be a Cartan geometry of type $(\fg,H,\sigma)$. Then:

\begin{enumerate}
\item If $(\alpha,i)$ is extension, then there is unique Cartan connection $\om^\alpha$ of type $(\fk,L,\rho)$ on the principal $L$--bundle $\ga\times_{i(H)}L\to M$ satisfying $$\iota^*(\om^\alpha)=\alpha\circ \gamma,$$ where $\iota$ is the natural inclusion of $\ga\to \ga\times_{i(H)}L$. The assignment $$\mathcal{F}_{(\alpha,i)}(\ga\to M,\gamma):=(\ga\times_{i(H)}L\to M,\om^\alpha)$$
and the formula
$$\mathcal{F}_{(\alpha,i)}(\Phi)\circ r^l \circ \iota(u):=r^l\circ \iota(\Phi(u))$$
for morphism $\Phi\in \mathcal{C}^{(\fg,H,\sigma)}$, $l\in L$ and $u\in \ga$ together define an extension functor $\mathcal{F}_{(\alpha,i)}: \mathcal{C}^{(\fg,H,\sigma)}\to \mathcal{C}^{(\fk,L,\rho)}.$
\item If $\fk=\fl$, then there is unique principal connection $\om^\alpha$ on the principal $L$--bundle $\ga\times_{i(H)}L\to M$ such that $$\iota^*(\om^\alpha)=\alpha\circ \gamma$$ holds. The assignment $\gamma\mapsto \om^\alpha$ is a functor from the category $\mathcal{C}^{(\fg,H,\sigma)}$ to the category of principal bundle connections.
\item If $L=Gl(\fm)$ and $\fk=\fl=\frak{gl}(\fm)$ for some vector space $\fm$, then there is unique linear connection $\nabla^\alpha$ on the vector bundle $\ga\times_{i(H)}\fm \to M$ associated to the natural action of $L$ on $\fm$ such that $$(T\pi)^*(\nabla^\alpha s)(\xi)=D_{\gamma(\xi)}s+\alpha\circ \gamma(\xi)(s)$$ holds for all sections $s$ of $\ga\times_{i(H)}\fm$ and all $\xi\in T\ga$, where the fundamental derivative $D_{t}s(u)$ is given by the directional derivative of $s$ in direction $\gamma^{-1}(\gamma(\xi)(u))$ at $u\in \ga$. The assignment $\gamma\mapsto \nabla^\alpha$ is a functor from the category $\mathcal{C}^{(\fg,H,\sigma)}$ to the category of linear connections on the associated vector bundles.
\end{enumerate}

Moreover, if $\fk$ has a chosen Lie algebra structure, then the curvature of $\om^\alpha$ or $\nabla^\alpha$ as the function $\kappa_\alpha: \ga\to \wedge^2(\fg/\fh)^*\otimes \fk$ is given by formula $$\kappa_\alpha(X+\fh,Y+\fh)=\alpha(d\gamma(\gamma^{-1}(X),\gamma^{-1}(Y)))+\ad_\fk(\alpha(X))(\alpha(Y))$$ for $X,Y\in \fg/\fh$.
\end{thm}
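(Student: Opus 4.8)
The plan is to construct $\om^\alpha$ by hand on the associated bundle and then transport morphisms, adapting the Klein--geometry arguments of \cite[Section 1.5.15]{parabook} so that only the skeleton data are used. For part (1), realize $\ga\times_{i(H)}L$ as the quotient of $\ga\times L$ by the free proper $H$--action $(u,l)\cdot h=(uh,i(h)^{-1}l)$, with residual principal right $L$--action $r^l[u,l']=[u,l'l]$, projection $[u,l]\mapsto\pi(u)$ onto $M=\ga/H$, and inclusion $\iota(u)=[u,e]$; note $\iota(uh)=[u,i(h)]=r^{i(h)}(\iota(u))$, so $T\iota$ carries $\zeta_Y(u)$ to $\zeta_{di(Y)}(\iota(u))$ for $Y\in\fh$. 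Every tangent vector at $[u,l]$ can be written, non--uniquely, as $Tr^l\cdot T\iota\cdot\xi+\zeta_Z([u,l])$ with $\xi\in T_u\ga$, $Z\in\fl$, and I would define
$$\om^\alpha\big(Tr^l\cdot T\iota\cdot\xi+\zeta_Z([u,l])\big):=\rho(l)^{-1}\big(\alpha(\gamma(\xi))\big)+Z.$$
The first task is well--definedness: independence of the representative $(u,l)$ inside its $H$--orbit uses $(r^h)^*\gamma=\sigma(h)^{-1}\circ\gamma$ together with $\alpha\circ\sigma(h)=\rho(i(h))\circ\alpha$, and the same two identities show the prescription $\iota^*\om^\alpha=\alpha\circ\gamma$ is consistent even when $i$ is not injective, since $\alpha\circ\gamma$ is then invariant under the right action of $\ker(i)$; independence of the decomposition reduces, after applying $Tr^{l^{-1}}$, to the vertical case $\xi=\zeta_Y(u)$, $Y\in\fh$, where it follows from $\gamma(\zeta_Y)=Y$, $\alpha|_\fh=di$ and $\rho|_\fl=\Ad_L$. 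Smoothness of $\om^\alpha$ is then local and routine.

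Next I would check that $\om^\alpha$ is a Cartan connection of type $(\fk,L,\rho)$. Reproduction of fundamental fields $\om^\alpha(\zeta_Z)=Z$ and the equivariance $(r^l)^*\om^\alpha=\rho(l)^{-1}\circ\om^\alpha$ are immediate from the defining formula, and $\iota^*\om^\alpha=\alpha\circ\gamma$ holds by construction. The only genuinely new point compared with the induced--principal--connection case is that $\om^\alpha([u,l])$ is a linear isomorphism onto $\fk$: here $dim(\ga\times_{i(H)}L)=dim(M)+dim(L)=dim(\fg/\fh)+dim(\fl)=dim(\fk/\fl)+dim(\fl)=dim(\fk)$, the form $\om^\alpha$ restricts to an isomorphism of the vertical subspace onto $\fl$, and modulo verticals it is induced by the isomorphism $\gamma:T_{\pi(u)}M\to\fg/\fh$ followed by the isomorphism $\fg/\fh\to\fk/\fl$ determined by $\alpha$ --- and this last isomorphism is precisely the defining property of an extension, the only place that hypothesis is used. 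Uniqueness of $\om^\alpha$ follows from Theorem \ref{rigid} (or directly, since $\iota(\ga)$ and the $L$--action generate the whole bundle and any two $L$--equivariant solutions agreeing on $\iota(\ga)$ coincide). For the functor, given $\Phi:\ga\to\ga'$ in $\mathcal{C}^{(\fg,H,\sigma)}$ I would set $\mathcal{F}_{(\alpha,i)}(\Phi)[u,l]:=[\Phi(u),l]$ (the content of the displayed formula), check it is a well--defined $L$--bundle morphism over the base map of $\Phi$, and verify that it pulls the Cartan connection on $\ga'\times_{i(H)}L$ back to $\om^\alpha$; by the spanning argument it suffices to check this on $\iota(\ga)$, where it reads $\Phi^*(\alpha\circ\gamma')=\alpha\circ\Phi^*\gamma'=\alpha\circ\gamma$. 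Compatibility with composition and identities is then immediate.

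For part (2) I would run the same construction with $\fk=\fl$, so $\rho=\Ad_L$: the formula $\om^\alpha(Tr^l\cdot T\iota\cdot\xi+\zeta_Z)=\Ad(l^{-1})(\alpha(\gamma(\xi)))+Z$ manifestly defines a principal connection on $\ga\times_{i(H)}L$ --- one simply drops the isomorphism claim, which is not expected since $dim(M)$ need not vanish, and correspondingly the extension hypothesis is not used --- with functoriality as in part (1). For part (3) I would apply the standard bijective correspondence between principal connections on a principal $Gl(\fm)$--bundle and linear connections on its associated standard vector bundle to the connection from part (2); the resulting $\na^\alpha$ on $\ga\times_{i(H)}\fm$ is characterized by $(T\pi)^*(\na^\alpha s)(\xi)=D_{\gamma(\xi)}s+\alpha\circ\gamma(\xi)(s)$, which is just the translation of $\iota^*\om^\alpha=\alpha\circ\gamma$ into the covariant--derivative description --- the flat term $D_{\gamma(\xi)}s$ plus the connection form $\alpha\circ\gamma(\xi)\in\frak{gl}(\fm)$ acting on $s$ --- and functoriality is inherited from part (2).

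Finally, when $\fk$ carries a Lie bracket the structure form $d\om^\alpha+\tfrac12[\om^\alpha,\om^\alpha]$ is horizontal, hence the associated function is determined by its restriction to $\iota(\ga)$; there $\iota^*\om^\alpha=\alpha\circ\gamma$ gives $\alpha\circ d\gamma+\tfrac12[\alpha\circ\gamma,\alpha\circ\gamma]$, and evaluating on $\gamma^{-1}(X),\gamma^{-1}(Y)$ yields $\alpha(d\gamma(\gamma^{-1}(X),\gamma^{-1}(Y)))+[\alpha(X),\alpha(Y)]_\fk=\alpha(d\gamma(\gamma^{-1}(X),\gamma^{-1}(Y)))+\ad_\fk(\alpha(X))(\alpha(Y))$, which is the claimed $\kappa_\alpha$; the same computation applies to $\na^\alpha$ once its curvature is identified with the structure function of the associated principal connection. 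I expect the main obstacle to be precisely the well--definedness and isomorphism verification for $\om^\alpha$ in part (1) --- tracking the two independent ambiguities in the expression $Tr^l\cdot T\iota\cdot\xi+\zeta_Z$ and isolating where the extension property is genuinely needed; everything downstream is routine bookkeeping modeled on \cite{parabook}.
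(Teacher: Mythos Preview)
Your proposal is correct and follows precisely the approach the paper invokes: the paper's own proof is deliberately terse, simply observing that the arguments of \cite[Theorem 1.5.6, Lemma 1.5.15, Section 1.5.8, Proposition 1.3.4, Corollary 1.5.7, Proposition 1.5.16]{parabook} go through unchanged because they use only the skeleton data $(\alpha,i)$, and your explicit construction of $\om^\alpha$ on $\ga\times_{i(H)}L$, the well--definedness and isomorphism checks (correctly isolating the extension hypothesis as the sole place the isomorphism $\fg/\fh\cong\fk/\fl$ is needed), and the curvature computation via $\iota^*\om^\alpha=\alpha\circ\gamma$ are exactly what those referenced proofs contain. One small correction: your appeal to Theorem~\ref{rigid} for uniqueness is misplaced, since that result concerns morphisms between Cartan geometries rather than uniqueness of a connection form; your parenthetical direct argument (any two $L$--equivariant forms agreeing on $\iota(\ga)$ coincide) is the right one and is all that is needed.
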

\begin{proof}
The existence, uniqueness and functoriality of the construction of $\om^\alpha$ can be proved by the proofs of the analogous statements in the case of Cartan geometries modeled on Klein geometries, see \cite[Theorem 1.5.6]{parabook} and \cite[Lemma 1.5.15]{parabook}. Indeed, the requirements on the morphism $(\alpha,i)$ of skeletons are precisely those that are required by the \cite[Theorem 1.5.6]{parabook} and \cite[Lemma 1.5.15]{parabook}. Then we can follow the \cite[Section 1.5.8.]{parabook} to show that the fundamental derivative is well--defined and has the same properties as the fundamental derivative on Cartan geometries modeled on Klein geometries. Therefore the claim for $\nabla^\alpha$ then follows from the usual relation between the principal connection and the linnear connections on the associated vector bundles (cf.  \cite[Proposition 1.3.4]{parabook}).

The claim about $\kappa_\alpha$ follows by the same arguments as in proofs of \cite[Corollary 1.5.7]{parabook} and \cite[Proposition 1.5.16]{parabook} even in the cases, when the curvature of the Cartan geometry $(\ga\to M,\gamma)$ is not available.
\end{proof}

Let us remark that the tractor connections in \cite{parabook} for Cartan geometries of type $(\fg,H,\Ad_G|_H)$ and representations $\lambda: G\to Gl(\fm)$ correspond to morphisms $(d\lambda,\lambda|_H)$ of skeleton $(\fg,H,\Ad_G|_H)$ to skeleton $(\frak{gl}(\fm),Gl(\fm),\Ad_{Gl(\fm)})$.

\item As in the case of Cartan geometries modeled on Klein geometries c.f. \cite[Proposition 1.5.15.]{parabook}, the extension functors can be used for the complete classification of homogeneous Cartan geometries of type $(\fk,L,\rho)$. Indeed, in the same way as in \cite[Proposition 1.5.15.]{parabook}, it follows that, if $G$ is a transitive subgroup of the automorphisms group of $(\ba\to M,\om)$, then there is injective extension $(\alpha,i)$ of effective skeleton $(\fg,H,\Ad_G|_H)$ corresponding to Klein geometry $(G,H)$ to $(\fk,L,\rho)$ such that the Cartan geometry $(\ba\to M,\om)$ is isomorphic to $\mathcal{F}_{(\alpha,i)}(G\to G/H,\om_G)$. Conversely,  $\mathcal{F}_{(\alpha,i)}(G\to G/H,\om_G)$ is a homogeneous Cartan geometry of type $(\fk,L,\rho)$ for any extension $(\alpha,i)$ of skeleton $(\fg,H,\Ad_G|_H)$ corresponding to Klein geometry $(G,H)$ to $(\fk,L,\rho)$. Indeed, $G$ is a transitive subgroup of the automorphisms group of $\mathcal{F}_{(\alpha,i)}(G\to G/H,\om_G)$, because $\mathcal{F}_{(\alpha,i)}$ is a functor.

\item Finally, let us show, how to compute the group $Ext(\fk,L,\rho)$ for the effective skeletons $(\fk,L,\rho)$. 

\begin{prop}\label{extcomp}
If $(\fk,L,\rho)$ is an effective skeleton, then 
$$Ext(\fk,L,\rho)\cong\{\alpha\in N_{Gl(\fk)}(\rho(L)): \alpha|_{\fl}=\Ad_{Gl(\fk)}(\alpha)|_{d\rho(\fl)}\},$$
where we denote by $N_{Gl(\fk)}(\rho(L))$ the normalizer of $\rho(L)$ in $Gl(\fk)$.

In particular, $Ext(\fk,L,\rho)$ is linear algebraic Lie group with Lie algebra
$$\frak{ext}(\fk,L,\rho)\cong \{\alpha\in \frak{gl}(\fk): \exp(\alpha)\in Ext(\fk,L,\rho)\}.$$
\end{prop}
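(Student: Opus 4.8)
The plan is to show that the forgetful assignment $(\alpha,i)\mapsto\alpha$ realizes the asserted isomorphism, effectiveness being exactly what allows $i$ to be recovered from $\alpha$. First I would record the consequences of effectiveness: if $\rho(n)=\id$ for some $n\in L$, then $(\id-\rho(n))\fk=\{0\}$, so the subgroup $\ker\rho$ satisfies the defining property of the kernel $N$ of $(\fk,L,\rho)$ and is therefore trivial. Hence $\rho\colon L\to Gl(\fk)$ is an injective immersion, $\rho\colon L\to\rho(L)$ is an isomorphism onto the immersed Lie subgroup $\rho(L)\subseteq Gl(\fk)$, and $d\rho\colon\fl\to d\rho(\fl)$ is a Lie algebra isomorphism; from now on I identify $\fl$ with $d\rho(\fl)$ along $d\rho$. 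For $\alpha\in N_{Gl(\fk)}(\rho(L))$ the conjugation $\conj_\alpha:=\alpha(\cdot)\alpha^{-1}$ preserves $\rho(L)$, hence also its Lie algebra $d\rho(\fl)$, so $i_\alpha:=\rho^{-1}\circ\conj_\alpha\circ\rho\in\Aut(L)$ is well defined, and under the identification above $di_\alpha=\Ad_{Gl(\fk)}(\alpha)|_{d\rho(\fl)}$.

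Next I would check that $(\alpha,i)\mapsto\alpha$ maps $Ext(\fk,L,\rho)$ injectively into the right-hand side. Given a bijective extension $(\alpha,i)$ of $(\fk,L,\rho)$ to itself, the equivariance $\alpha\circ\rho(l)=\rho(i(l))\circ\alpha$ says $\conj_\alpha(\rho(l))=\rho(i(l))\in\rho(L)$ for all $l\in L$, so $\alpha\in N_{Gl(\fk)}(\rho(L))$, and by injectivity of $\rho$ we must have $i=i_\alpha$; thus $(\alpha,i)$ is determined by $\alpha$. Differentiating $\rho\circ i=\conj_\alpha\circ\rho$ at the identity and combining with the requirement $\alpha|_\fl=di$ that $\alpha$ extend $di$ (Definition \ref{defext}) gives $\alpha|_\fl=di=\Ad_{Gl(\fk)}(\alpha)|_{d\rho(\fl)}$, so $\alpha$ lies in the right-hand side.

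Conversely, for any $\alpha$ in the right-hand side I would set $i:=i_\alpha$; then the equivariance holds by construction of $i_\alpha$, and $\alpha|_\fl=\Ad_{Gl(\fk)}(\alpha)|_{d\rho(\fl)}=di_\alpha$, so $\alpha$ extends $di$. Since $\alpha|_\fl=di$ is an isomorphism $\fl\to\fl$ and $\alpha\in Gl(\fk)$, the map $\alpha$ preserves $\fl$ and descends to an isomorphism of $\fk/\fl$, so $(\alpha,i)$ is a bijective extension of $(\fk,L,\rho)$ to itself mapping to $\alpha$. Hence $(\alpha,i)\mapsto\alpha$ is a bijection onto the right-hand side, and, since composition of extensions is componentwise, it is an isomorphism of groups.

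For the final assertion, once one knows $\rho(L)$ is closed in $Gl(\fk)$ (or more generally that $N_{Gl(\fk)}(\rho(L))$ is algebraic, which is the relevant setting), the right-hand side is cut out in $Gl(\fk)$ by the conditions that $\alpha$ normalizes the subalgebra $d\rho(\fl)$, that $\alpha$ normalizes the component structure of $\rho(L)$, and that $\alpha|_\fl=\Ad_{Gl(\fk)}(\alpha)|_{d\rho(\fl)}$; each of these is polynomial in the entries of $\alpha$ and of $\alpha^{-1}$. Hence $Ext(\fk,L,\rho)$ is a linear algebraic subgroup of $Gl(\fk)$, and its Lie algebra is the set of $\alpha\in\frak{gl}(\fk)$ whose one-parameter subgroup lies in $Ext(\fk,L,\rho)$, as stated. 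The points I expect to need the most care are the use of effectiveness above — it is precisely the injectivity of $\rho$ that pins down $i=i_\alpha$ — and the reduction, in the last step, of the condition ``$\alpha$ normalizes the (possibly non-closed) immersed subgroup $\rho(L)$'' to the displayed algebraic conditions; the remaining verifications are routine.
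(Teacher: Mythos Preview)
Your proposal is correct and follows essentially the same route as the paper: project $(\alpha,i)\mapsto\alpha$, use effectiveness to get injectivity of $\rho$ and hence recover $i=\rho^{-1}\circ\conj_\alpha\circ\rho$, and then verify that the image of the projection is exactly the displayed set. You supply more detail than the paper, in particular for the ``linear algebraic'' assertion (which the paper simply declares clear) and in flagging the subtlety about whether $\rho(L)$ is closed, but the argument is the same.
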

\begin{proof}
If we consider the map $pr_1(Ext(\fk,L,\rho))\subset Gl(\fk)$ given by projection to the first factor, then $Ker(pr_1)$ consist by extensions $(\id,i)$ for automorphisms $i$ of $L$ such that $di=\id$. On the other hand, the map $\rho$ is injective, because $Ker(\rho)$ is by definition contained in kernel of $(\fk,L,\rho)$. Thus the property (2) of morphisms of skeletons imply that $i(l)=\rho^{-1}(\alpha\circ \rho(l)\circ \alpha^{-1})$ holds for all $l\in L$ and thus $i=\id$. Therefore $pr_1$ is injective and the set $\{\alpha\in N_{Gl(\fk)}(\rho(L)): \alpha|_{\fl}=\Ad_{Gl(\fk)}(\alpha)|_{d\rho(\fl)}\}$ precisely characterizes the image of $pr_1$, because if $\alpha|_{\fl}=\Ad_{Gl(\fk)}(\alpha)|_{d\rho(\fl)}$, then there is unique Lie algebra automorphisms $i(l):=\rho^{-1}(\alpha\circ \rho(l)\circ \alpha^{-1})$ such that $(\alpha,i)$ is extension. In particular, the last claim is clear.
\end{proof}

Since it is simpler to compute the normalizers of linear spaces then subgroups, we can compute the Lie groups of infinitesimal versions of extensions, which are defined as
$$IExt(\fk,\fl,d\rho):=\{\alpha\in N_{Gl(\fk)}(d\rho(\fl)): \alpha|_\fl=\Ad_{Gl(\fk)}(\alpha)|_{d\rho(\fl)}\}$$
and their Lie algebras
$$\frak{iext}(\fk,\fl,d\rho):=\{\alpha\in \frak{gl}(\fk): [\alpha,d\rho(\fl)]\subset d\rho(\fl), \alpha|_\fl=\ad_{\frak{gl}(\fk)}(\alpha)|_{d\rho(\fl)}\},$$
where $[,]$ is the bracket in $\frak{gl}(\fk).$

The following Lemma follows from standard relations between the Lie groups and Lie algebras.

\begin{lem}
It holds:
\begin{enumerate}
\item The Lie groups $IExt(\fk,\fl,d\rho)$ are linear algebraic Lie groups and $\frak{iext}(\fk,\fl,d\rho)$ are linear algebraic Lie algebras.

\item $Ext(\fk,L,\rho)\subset IExt(\fk,\fl,d\rho)$ and $\frak{ext}(\fk,L,\rho)\subset \frak{iext}(\fk,\fl,d\rho)$.

\item The element $\alpha\in IExt(\fk,\fl,d\rho)$ is element of $Ext(\fk,L,\rho)$ if and only if conjugation by $\alpha$ permutes the connected components of $\rho(L)$.

\item The element $\alpha\in \frak{iext}(\fk,\fl,d\rho)$ is element of $\frak{ext}(\fk,L,\rho)$ if and only if $\exp(\alpha)$ permutes the connected components of $\rho(L)$.
\end{enumerate}
\end{lem}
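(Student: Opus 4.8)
The plan is to deduce all four items from Proposition \ref{extcomp} together with two standard facts: an invertible linear map normalizing a Lie group $\rho(L)\subset Gl(\fk)$ necessarily normalizes its Lie algebra $d\rho(\fl)$; and, conversely, normalizing $d\rho(\fl)$ already forces normalization of the identity component $\rho(L)^\circ$. With this, items (1) and (2) become essentially formal, and the content of (3) and (4) is exactly that the passage from $IExt(\fk,\fl,d\rho)$ to $Ext(\fk,L,\rho)$ consists precisely in imposing the component condition.

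For (1) I would observe that the two conditions cutting out $IExt(\fk,\fl,d\rho)$ inside $Gl(\fk)$ are polynomial: writing $\alpha^{-1}=\det(\alpha)^{-1}\mathrm{adj}(\alpha)$, the condition $\alpha\, d\rho(\fl)\,\alpha^{-1}\subset d\rho(\fl)$ becomes, after multiplication by $\det(\alpha)$, the vanishing of finitely many polynomials in the entries of $\alpha$ (this is just the stabilizer of the subspace $d\rho(\fl)$ under the conjugation action on the Grassmannian of $\frak{gl}(\fk)$), while the linear condition $\alpha|_\fl=\Ad_{Gl(\fk)}(\alpha)|_{d\rho(\fl)}$, i.e.\ $d\rho(\alpha X)\,\alpha=\alpha\, d\rho(X)$ for $X\in\fl$, is already polynomial. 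Hence $IExt(\fk,\fl,d\rho)$ is Zariski closed in $Gl(\fk)$, so a linear algebraic group, and its Lie algebra is cut out by the linearizations of these equations, which is exactly the algebraic Lie algebra $\frak{iext}(\fk,\fl,d\rho)$. For (2): if $\alpha\in N_{Gl(\fk)}(\rho(L))$, then conjugation by $\alpha$ is an automorphism of $\rho(L)$, hence carries $\mathrm{Lie}(\rho(L))=d\rho(\fl)$ onto itself, so $\alpha\in N_{Gl(\fk)}(d\rho(\fl))$; since the second defining condition is literally the same in Proposition \ref{extcomp} and in the definition of $IExt$, the characterization in Proposition \ref{extcomp} gives $Ext(\fk,L,\rho)\subset IExt(\fk,\fl,d\rho)$, and taking Lie algebras (both being algebraic, by (1) and Proposition \ref{extcomp}) — equivalently, using the $\exp$--descriptions — yields $\frak{ext}(\fk,L,\rho)\subset \frak{iext}(\fk,\fl,d\rho)$.

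For (3) the key observation is that for $\alpha\in IExt(\fk,\fl,d\rho)$ conjugation by $\alpha$ always preserves $\rho(L)^\circ$: from $\alpha\, d\rho(\fl)\,\alpha^{-1}=d\rho(\fl)$ we get $\alpha\exp(Y)\alpha^{-1}=\exp(\alpha Y\alpha^{-1})\in\exp(d\rho(\fl))$ for all $Y\in d\rho(\fl)$, and since $\rho(L)^\circ$ is generated by $\exp(d\rho(\fl))$ this gives $\mathrm{conj}_\alpha(\rho(L)^\circ)\subset\rho(L)^\circ$, with equality by applying the same argument to $\alpha^{-1}$. Consequently $\mathrm{conj}_\alpha$ sends each connected component $g\,\rho(L)^\circ$ of $\rho(L)$ to the coset $(\alpha g\alpha^{-1})\rho(L)^\circ$, which is contained in $\rho(L)$ exactly when $\alpha g\alpha^{-1}\in\rho(L)$. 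Therefore $\alpha\in N_{Gl(\fk)}(\rho(L))$ if and only if $\mathrm{conj}_\alpha$ maps every connected component of $\rho(L)$ onto a connected component of $\rho(L)$, i.e.\ permutes the connected components; since $\alpha\in IExt$ already satisfies the second defining condition of Proposition \ref{extcomp}, that proposition then says $\alpha\in Ext(\fk,L,\rho)$ if and only if conjugation by $\alpha$ permutes the connected components of $\rho(L)$. Item (4) is (3) applied to $\exp(\alpha)$: for $\alpha\in\frak{iext}(\fk,\fl,d\rho)$ one has $\exp(\alpha)\in IExt(\fk,\fl,d\rho)$ (the exponential of a Lie algebra element lands in the group), so by (3) $\exp(\alpha)\in Ext(\fk,L,\rho)$ iff conjugation by $\exp(\alpha)$ permutes the connected components of $\rho(L)$; and by the $\exp$--characterization of $\frak{ext}(\fk,L,\rho)$ in Proposition \ref{extcomp}, $\exp(\alpha)\in Ext(\fk,L,\rho)$ iff $\alpha\in\frak{ext}(\fk,L,\rho)$.

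The only point requiring care is the equivalence in (3) between ``$\alpha$ normalizes $\rho(L)$'' and ``$\mathrm{conj}_\alpha$ permutes the connected components'': one must use that $IExt$--membership already forces $\mathrm{conj}_\alpha$ to fix $\rho(L)^\circ$, so that the coset structure of the components is respected, and one should read ``permutes'' as a bijection of the (possibly infinite) set of components, which is what guarantees the reverse inclusion $\rho(L)\subset\alpha\rho(L)\alpha^{-1}$ as well as $\alpha\rho(L)\alpha^{-1}\subset\rho(L)$. Everything else reduces to the description of $Ext(\fk,L,\rho)$ and $\frak{ext}(\fk,L,\rho)$ from Proposition \ref{extcomp} and to routine facts relating algebraic groups to their Lie algebras via $\exp$.
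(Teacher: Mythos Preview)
Your argument is correct and is precisely the elaboration of what the paper leaves implicit: the paper's own ``proof'' is the single sentence that the lemma follows from standard relations between Lie groups and Lie algebras, and what you have written is exactly the unpacking of those standard relations via Proposition~\ref{extcomp}. There is no alternative route here to compare.
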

\end{enumerate}

\section{The categories of Cartan geometries with morphisms induced by extension functors}\label{sec2}

Let us start by checking that the definitions \ref{symdef0} and \ref{symdef2} of morphisms in the category $\mathcal{C}$ of Cartan geometries are equivalent.

\begin{prop}\label{altdef}
Let $(\alpha,i)$ be an extension of the skeleton $(\fg,H,\sigma)$ to the skeleton $(\fk,L,\rho)$, let $(\ga\to I,\gamma)$ be a Cartan geometry of type $(\fg,H,\sigma)$ and let $(\ba\to M,\om)$ be a Cartan geometry of type $(\fk,L,\rho)$. Then:

\begin{enumerate}
\item If $\Phi\circ \mathcal{F}_{(\alpha,i)}$ is a morphism from $(\ga\to I,\gamma)$ to $(\ba\to M,\om)$ in the category $\mathcal{C}$ and $\iota$ is the natural inclusion of $\ga\to \mathcal{F}_{(\alpha,i)}(\ga)$, then $$\Phi\circ \iota: \ga\to \ba$$ is a $(\alpha,i)$--morphism from $(\ga\to I,\gamma)$ to $(\ba\to M,\om)$.
\item If $\Phi$ is a $(\alpha,i)$--morphism from $(\ga\to I,\gamma)$ to $(\ba\to M,\om)$ and we define $$\tilde \Phi(r^l\circ \iota(u)):=r^{l}\circ \iota(\Phi(u)): \mathcal{F}_{(\alpha,i)}\ga\to \ba$$ for $u\in \ga$ and $l\in L$, then $\tilde \Phi\circ \mathcal{F}_{(\alpha,i)}$ is a morphism from $(\ga\to I,\gamma)$ to $(\ba\to M,\om)$ in the category $\mathcal{C}$.
\end{enumerate}

Moreover, 

\begin{itemize}
\item If $i$ is injective, then the extension functor $\mathcal{F}_{(\alpha,i)}$ is faithful.
\item If $i$ is bijective, then the extension functor $$\mathcal{F}_{(\alpha^{-1},i^{-1})}:\mathcal{C}^{(\fk,L,\rho)}\to \mathcal{C}^{(\fk,L,\rho)}$$ is faithful, too, and $\mathcal{C}^{(\fk,L,\rho)}$ and $\mathcal{C}^{(\fg,H,\sigma)}$ are equivalent categories.
\end{itemize}
In particular, the isomorphisms in the category $\mathcal{C}$ are in bijective correspondence with the $(\alpha,i)$--isomorphisms.
\end{prop}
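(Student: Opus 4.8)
The plan is to prove the statements (1) and (2) by unwinding Definitions~\ref{symdef0} and \ref{symdef2} together with the uniqueness clause of Theorem~\ref{cartoprinc}(1), and then to obtain the faithfulness, the equivalence of categories, and the description of isomorphisms by formal arguments. For (1), note that by Definition~\ref{symdef0} the assertion that $\Phi\circ\mathcal{F}_{(\alpha,i)}$ is a morphism of $\mathcal{C}$ means exactly that $\Phi: \mathcal{F}_{(\alpha,i)}\ga\to\ba$ is an $L$--bundle morphism with $\Phi^*(\om)=\om^\alpha$; since $\iota(u\cdot h)=\iota(u)\cdot i(h)$, the composite $\Phi\circ\iota$ is a principal bundle morphism over $i$, and $(\Phi\circ\iota)^*(\om)=\iota^*(\Phi^*\om)=\iota^*(\om^\alpha)=\alpha\circ\gamma$ by the defining property of $\om^\alpha$ in Theorem~\ref{cartoprinc}(1), so $\Phi\circ\iota$ is an $(\alpha,i)$--morphism. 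For (2), I would first check, by a direct computation using the $i$--equivariance of $\Phi$, that $\tilde\Phi$ is well defined and is an $L$--bundle morphism over $\id_L$ covering the same base map as $\Phi$ (if $r^l\circ\iota(u)=r^{l'}\circ\iota(u')$, then $u'=u\cdot h$ and $l'=i(h)^{-1}l$ for some $h\in H$, whence $r^l(\Phi(u))=r^{l'}(\Phi(u'))$). Then $\tilde\Phi^*(\om)$ is $L$--equivariant, reproduces the fundamental vector fields, and satisfies $\iota^*(\tilde\Phi^*\om)=(\tilde\Phi\circ\iota)^*(\om)=\Phi^*(\om)=\alpha\circ\gamma$; since every tangent vector of $\mathcal{F}_{(\alpha,i)}\ga$ is a sum of a fundamental vector and an $L$--translate of a vector tangent to $\iota(\ga)$, these properties force $\tilde\Phi^*(\om)=\om^\alpha$, which is characterised by the same data in Theorem~\ref{cartoprinc}(1). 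Hence $\tilde\Phi$ is a morphism of type $(\fk,L,\rho)$ and $\tilde\Phi\circ\mathcal{F}_{(\alpha,i)}$ is a morphism of $\mathcal{C}$.

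Next I would check that the two constructions of (1) and (2) are mutually inverse; both composites reduce to evident identities on $\iota(\ga)$ and extend over $\mathcal{F}_{(\alpha,i)}\ga$ by $L$--equivariance. For faithfulness, the formula defining $\mathcal{F}_{(\alpha,i)}$ on morphisms gives $\mathcal{F}_{(\alpha,i)}(\Phi)\circ\iota=\iota\circ\Phi$, and since the inclusion $\iota: \ga'\to\mathcal{F}_{(\alpha,i)}\ga'$ is injective precisely when $\ker i$ is trivial, injectivity of $i$ lets one recover $\Phi$ from $\mathcal{F}_{(\alpha,i)}(\Phi)$; thus $\mathcal{F}_{(\alpha,i)}$ is faithful.

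When $i$ is bijective, $di: \fh\to\fl$ is a linear isomorphism and, as $\alpha$ also induces an isomorphism $\fg/\fh\to\fk/\fl$, the five lemma shows that $\alpha: \fg\to\fk$ is a linear isomorphism; one then checks from Definition~\ref{defext} that $(\alpha^{-1},i^{-1})$ is an extension of $(\fk,L,\rho)$ to $(\fg,H,\sigma)$, so $\mathcal{F}_{(\alpha^{-1},i^{-1})}$ is faithful by the previous step. The composition formula for morphisms of $\mathcal{C}$ from the introduction — equivalently, associativity of associated bundles together with the uniqueness in Theorem~\ref{cartoprinc}(1) — provides natural isomorphisms $\mathcal{F}_{(\alpha,i)}\circ\mathcal{F}_{(\alpha^{-1},i^{-1})}\cong\mathcal{F}_{(\id,\id)}\cong\id$ and $\mathcal{F}_{(\alpha^{-1},i^{-1})}\circ\mathcal{F}_{(\alpha,i)}\cong\id$, so these two functors are mutually quasi--inverse and the categories $\mathcal{C}^{(\fk,L,\rho)}$ and $\mathcal{C}^{(\fg,H,\sigma)}$ are equivalent.

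Finally, I would use the same composition formula to see that sending a morphism $\Phi\circ\mathcal{F}_{(\alpha,i)}$ of $\mathcal{C}$ to its underlying principal bundle morphism $\Phi\circ\iota$ is functorial, so $\Phi\circ\mathcal{F}_{(\alpha,i)}$ is an isomorphism of $\mathcal{C}$ exactly when $\Phi\circ\iota$ is a bijection whose inverse is again a morphism of $\mathcal{C}$. A bijective $\Phi\circ\iota$, being a principal bundle morphism over $i$, forces $i$ — and hence $\alpha$ — to be bijective; conversely, if $(\alpha,i)$ and $\Phi\circ\iota$ are both bijective, i.e.\ $\Phi\circ\iota$ is an $(\alpha,i)$--isomorphism in the sense of Definition~\ref{symdef2}, then $(\Phi\circ\iota)^{-1}$ is an $(\alpha^{-1},i^{-1})$--morphism which by (2) yields a morphism of $\mathcal{C}$ inverse to $\Phi\circ\mathcal{F}_{(\alpha,i)}$. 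Hence the bijection of (1)/(2) restricts to the asserted bijection between isomorphisms of $\mathcal{C}$ and $(\alpha,i)$--isomorphisms. I expect the only genuine obstacle to be the step in (2) establishing $\tilde\Phi^*(\om)=\om^\alpha$; everything else is either a routine equivariance computation or a formal deduction, and it is precisely there that the uniqueness clause of Theorem~\ref{cartoprinc}(1) carries the weight.
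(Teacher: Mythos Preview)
Your argument is correct and follows essentially the same route as the paper: parts (1) and (2) are handled identically, with the uniqueness clause of Theorem~\ref{cartoprinc}(1) doing the work in (2), and the bijective case is the same formal deduction from the composition rule for extension functors. The one methodological difference is in the faithfulness step: the paper invokes Theorem~\ref{Liegp} to reduce equality of morphisms to equality at a single point and then compares the images $\iota(\Phi(v))$ and $\iota(\Phi'(v))$, whereas you use the cleaner observation that $\mathcal{F}_{(\alpha,i)}(\Phi)\circ\iota=\iota\circ\Phi$ together with the injectivity of $\iota$ on the target when $\ker i$ is trivial. Your version avoids the appeal to Theorem~\ref{Liegp} and is slightly more direct; the paper's version has the minor advantage of making explicit why the single--point rigidity of Cartan morphisms is relevant, but both arguments are equivalent in content.
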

\begin{proof}
If $\Phi: \mathcal{F}_{(\alpha,i)}(\ga\to I,\gamma)\to (\ba\to M,\om)$ is a morphism of Cartan geometries of type $(\fk,L,\rho)$, then $\Phi\circ \iota$ satisfies $\Phi\circ \iota(r^h\circ u)=r^{i(h)}\circ\Phi(\iota(u))$ for all $h\in H$ and $(\Phi\circ \iota)^*(\om)=\iota^*(\mathcal{F}_{(\alpha,i)}(\gamma))=\alpha\circ \gamma$, i.e., $\Phi\circ \iota$ is a $(\alpha,i)$--morphism. 

Conversely, if $\Phi$ is a $(\alpha,i)$--morphism from $(\ga\to I,\gamma)$ to $(\ba\to M,\om)$, then $\tilde \Phi(r^{i(h)^{-1}l} \circ \iota(r^h\circ u))=r^{i(h)^{-1}l}\circ\Phi(r^{h}\circ u)=r^{l}\circ\Phi(u)$ holds for all $h\in H$ and $l\in L$. Thus $\tilde \Phi$ is a well--defined $L$--bundle morphism and $\tilde \Phi\circ \iota=\Phi$ holds by the construction. Therefore $\iota^*\circ (\tilde \Phi^*(\om))=\Phi^*(\om)=\alpha\circ\gamma$ and thus $\tilde \Phi^*(\om)=\mathcal{F}_{(\alpha,i)}(\gamma)$ follows from the uniqueness part of Theorem \ref{cartoprinc}. Therefore $\tilde \Phi\circ \mathcal{F}_{(\alpha,i)}$ is a morphism from $(\ga\to I,\gamma)$ to $(\ba\to M,\om)$ in the category $\mathcal{C}$.

One of the consequences of Theorem \ref{Liegp} is that we can decide the equality of morphisms of Cartan geometries in $\mathcal{C}^{(\fg,H,\sigma)}$ or $\mathcal{C}^{(\fk,L,\rho)}$ by equality of images of single point. In particular, if morphisms $\Phi,\Phi'\in \mathcal{C}^{(\fg,H,\sigma)}$ map chosen point $v\in \ga$ on the points $u\in \ga$ and $r^h\circ u'\in \ga$ for $h\in H$, then $\mathcal{F}_{(\alpha,i)}(\Phi),\mathcal{F}_{(\alpha,i)}(\Phi')\in \mathcal{C}^{(\fk,L,\rho)}$ map the point $\iota(v)\in \mathcal{F}_{(\alpha,i)}(\ga)$ on the points $\iota(u)$ and $r^{i(h)}\circ \iota(u')$ and thus if $i$ is injective, then the functor $\mathcal{F}_{(\alpha,i)}$ is faithful. If $i$ is bijective, then $(\alpha^{-1},i^{-1})$ is a well--defined extension of $(\fk,L,\rho)$ to $(\fg,H,\sigma)$ and the second claim follows, because the composition of $\mathcal{F}_{(\alpha,i)}$ and $\mathcal{F}_{(\alpha^{-1},i^{-1})}$ is the identity functor.
\end{proof}

Let us start with the construction of the functor $\mathcal{F}_{\mathcal{S}}$ that will allow us to characterize and compute the $\mathcal{S}$--automorphism groups. Firstly, we will need a general construction of extension functors between the categories $\mathcal{SC}^{(\fk,L,\rho)}$ and $\mathcal{S'C}^{(\fk',L',\rho')}$.

\begin{prop}\label{extinsc}
Let $(\alpha,i)$ be an extension of $(\fk,L,\rho)$ to $(\fk',L',\rho')$ and let $F: \mathcal{S}\to \mathcal{S'}$ be a group homomorphism satisfying $F(\beta,j)\circ (\alpha,i)=(\alpha,i)\circ (\beta,j)$ for all $(\beta,j)\in \mathcal{S}$. Then there is an extension functor $\mathcal{F}_{(\alpha,i),F}$ from $\mathcal{SC}^{(\fk,L,\rho)}$ to $\mathcal{S'C}^{(\fk',L',\rho')}$ given as
$$\mathcal{F}_{(\alpha,i),F}(\ba\to M,\om):=\mathcal{F}_{(\alpha,i)}(\ba\to M,\om)$$
for $(\ba\to M,\om)\in \mathcal{SC}^{(\fk,L,\rho)}$ and
$$\mathcal{F}_{(\alpha,i),F}(\Phi\circ \mathcal{F}_{(\beta,j)}):=\mathcal{F}_{(\alpha,i)}(\Phi)\circ \mathcal{F}_{F(\beta,j)}$$
for the morphism $\Phi\circ\mathcal{F}_{(\beta,j)}$ in the category $\mathcal{SC}^{(\fk,L,\rho)}$.
\end{prop}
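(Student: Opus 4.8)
The plan is to reduce everything to Theorem~\ref{cartoprinc} and Proposition~\ref{altdef} and to isolate which half of the compatibility condition on $F$ does which job. By Definition~\ref{symdef1} and Proposition~\ref{altdef}, an $\mathcal{S}$--morphism from $(\ga\to I,\gamma)$ to $(\ba\to M,\om)$ is the same datum as a $(\beta,j)$--morphism $\Psi\colon\ga\to\ba$ for a unique $(\beta,j)\in\mathcal{S}$: the homomorphism $j$ is recovered from the equivariance of $\Psi$ (the $L$--action on $\ba$ is free) and $\beta$ from $\Psi^*\om=\beta\circ\gamma$ (the form $\gamma$ is a pointwise isomorphism), and then the honest morphism $\Phi$ in a presentation $\Phi\circ\mathcal{F}_{(\beta,j)}$ of this $\mathcal{S}$--morphism equals $\widetilde\Psi$ (notation of Proposition~\ref{altdef}), hence is determined too. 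So the formula in the statement is unambiguous, and it is enough to build $\mathcal{F}_{(\alpha,i),F}$ directly on such $\Psi$ and then to check that it coincides with $\mathcal{F}_{(\alpha,i)}(\Phi)\circ\mathcal{F}_{F(\beta,j)}$.

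Writing $(\beta',j'):=F(\beta,j)\in\mathcal{S}'$ and letting $\iota_\ga\colon\ga\hookrightarrow\mathcal{F}_{(\alpha,i)}\ga$, $\iota_\ba\colon\ba\hookrightarrow\mathcal{F}_{(\alpha,i)}\ba$ be the natural inclusions, I would set
$$\mathcal{F}_{(\alpha,i),F}(\Psi)\bigl(r^{l'}\circ\iota_\ga(u)\bigr):=r^{j'(l')}\circ\iota_\ba(\Psi(u)),\qquad u\in\ga,\ l'\in L'.$$
The first task is to check that this is a well--defined principal $L'$--bundle morphism over $j'$ covering the base map of $\Psi$. Independence of the representative $r^{l'}\iota_\ga(u)$ unwinds, via the relation $[r^lu,l']=[u,i(l)l']$ defining $\ga\times_{i(L)}L'$, to the single identity $i\circ j=j'\circ i$, which is exactly the $L'$--component of the hypothesis $F(\beta,j)\circ(\alpha,i)=(\alpha,i)\circ(\beta,j)$; equivariance over $j'$ and smoothness are then immediate, and $\mathcal{F}_{(\alpha,i),F}(\Psi)\circ\iota_\ga=\iota_\ba\circ\Psi$ holds by construction.

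The second task is to check that $\mathcal{F}_{(\alpha,i),F}(\Psi)$ is a $(\beta',j')$--morphism, i.e.\ that $\mathcal{F}_{(\alpha,i),F}(\Psi)^*(\om^\alpha)=\beta'\circ\gamma^\alpha$, where $\gamma^\alpha$, $\om^\alpha$ are the Cartan connections on $\mathcal{F}_{(\alpha,i)}\ga$, $\mathcal{F}_{(\alpha,i)}\ba$ furnished by Theorem~\ref{cartoprinc}, so that $\iota_\ga^*\gamma^\alpha=\alpha\circ\gamma$ and $\iota_\ba^*\om^\alpha=\alpha\circ\om$. Both sides are $\fk'$--valued one--forms on $\mathcal{F}_{(\alpha,i)}\ga$; they have the same equivariance (this uses $\beta'\circ\rho'(l')=\rho'(j'(l'))\circ\beta'$, valid since $(\beta',j')$ is a morphism of skeletons) and the same values $\beta'|_{\fl'}=dj'$ on the fundamental vector fields, so by the uniqueness clause of Theorem~\ref{cartoprinc} it suffices to compare their pullbacks along $\iota_\ga$; using $\mathcal{F}_{(\alpha,i),F}(\Psi)\circ\iota_\ga=\iota_\ba\circ\Psi$ together with $\iota_\ga^*\gamma^\alpha=\alpha\circ\gamma$, $\iota_\ba^*\om^\alpha=\alpha\circ\om$ and $\Psi^*\om=\beta\circ\gamma$, this reduces to $\alpha\circ\beta=\beta'\circ\alpha$, the $\fk'$--component of the hypothesis. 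So $\mathcal{F}_{(\alpha,i),F}$ takes values in $\mathcal{S'C}^{(\fk',L',\rho')}$, and since $\mathcal{F}_{(\alpha,i),F}(\Psi)\circ\iota_\ga=\iota_\ba\circ\Psi$, Proposition~\ref{altdef} identifies it with $\mathcal{F}_{(\alpha,i)}(\widetilde\Psi)\circ\mathcal{F}_{F(\beta,j)}$, as required.

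Finally, functoriality. Identities are preserved because $(\id,\id)\in\mathcal{S}$, $F(\id,\id)=(\id,\id)$ ($F$ is a homomorphism), and $\mathcal{F}_{(\alpha,i)}$ preserves identities. For composites, the guiding identity is
$$\mathcal{F}_{(\alpha,i)}\circ\mathcal{F}_{(\beta,j)}=\mathcal{F}_{(\alpha,i)\circ(\beta,j)}=\mathcal{F}_{F(\beta,j)\circ(\alpha,i)}=\mathcal{F}_{F(\beta,j)}\circ\mathcal{F}_{(\alpha,i)},$$
whose outer equalities are the compatibility of extension functors with composition of extensions and whose middle equality is the hypothesis; feeding this, the composition formula for morphisms in $\mathcal{C}$, the functoriality of $\mathcal{F}_{(\alpha,i)}$ and the homomorphism property of $F$ into the expansion of $\mathcal{F}_{(\alpha,i),F}(\Psi_2\circ\Psi_1)$ and comparing with $\mathcal{F}_{(\alpha,i),F}(\Psi_2)\circ\mathcal{F}_{(\alpha,i),F}(\Psi_1)$ yields the claim, modulo the canonical identifications of iterated associated bundles. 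The main obstacle is the bookkeeping of the first two tasks --- keeping straight that the $L'$--component of the compatibility enters in the descent to the associated bundle while the $\fk'$--component enters in the Cartan connection identity --- and doing so without any curvature or flat model available on the source geometry, which is exactly why the uniqueness clause of Theorem~\ref{cartoprinc}, rather than an explicit curvature computation, is the right tool.
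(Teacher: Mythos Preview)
Your argument is correct and rests on the same identity the paper uses, namely $\mathcal{F}_{(\alpha,i)}\circ\mathcal{F}_{(\beta,j)}=\mathcal{F}_{F(\beta,j)}\circ\mathcal{F}_{(\alpha,i)}$ coming from the hypothesis $F(\beta,j)\circ(\alpha,i)=(\alpha,i)\circ(\beta,j)$; the functoriality check via the composition formula for morphisms in $\mathcal{C}$ is essentially the paper's computation. The only real difference is presentational: the paper stays at the level of the formal compositions $\Phi\circ\mathcal{F}_{(\beta,j)}$ and simply asserts well--definedness of the target morphism from the functor identity, whereas you pass to the $(\beta,j)$--morphism picture of Proposition~\ref{altdef}, build the bundle map explicitly, and isolate that the $L'$--component $i\circ j=j'\circ i$ governs descent to the associated bundle while the $\fk'$--component $\alpha\circ\beta=\beta'\circ\alpha$ governs the Cartan connection identity. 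That extra bookkeeping (including your uniqueness remark for $(\beta,j)$) is not in the paper's proof but is a welcome clarification rather than a different strategy.
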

\begin{proof}
Since $\mathcal{F}_{(\alpha,i)}\circ \mathcal{F}_{(\beta,j)}(\ba\to M,\om)=\mathcal{F}_{F(\beta,j)}\circ \mathcal{F}_{(\alpha,i)}(\ba\to M,\om)$ holds by the assumption, it is clear that $\mathcal{F}_{(\alpha,i),F}(\Phi\circ \mathcal{F}_{(\beta,j)})$ is a well--defined morphism in the category $\mathcal{S'C}^{(\fk',L',\rho')}$. Thus to show that $\mathcal{F}_{(\alpha,i),F}$ is a functor it suffices to show that $\mathcal{F}_{(\alpha,i),F}$ is compatible with the compositions. Since $F(\beta,j)\circ (\alpha,i)=(\alpha,i)\circ (\beta,j)$, we can compute 
\begin{align*} 
&\mathcal{F}_{(\alpha,i),F}(\Phi\circ \mathcal{F}_{(\beta,j)}\circ \bar \Phi\circ \mathcal{F}_{(\bar \beta,\bar j)})
=\mathcal{F}_{(\alpha,i)}(\Phi\circ  \mathcal{F}_{(\beta,j)}(\bar \Phi))\circ \mathcal{F}_{F(\beta,j)\circ F(\bar \beta,\bar j)}\\
&=\mathcal{F}_{(\alpha,i)}(\Phi)\circ \mathcal{F}_{F(\beta,j)\circ (\alpha,i)}(\bar \Phi)\circ \mathcal{F}_{F(\beta,j)}\circ \mathcal{F}_{F(\bar \beta,\bar j)}\\
&=\mathcal{F}_{(\alpha,i),F}(\Phi\circ \mathcal{F}_{(\beta,j)})\circ \mathcal{F}_{(\alpha,i),F}(\bar \Phi\circ \mathcal{F}_{(\bar \beta,\bar j)})).
\end{align*}
\end{proof}

Let us construct the skeleton for the target of the functor $\mathcal{F}_{\mathcal{S}}$. Suppose $\mathcal{S}$ is a subgroup of $Ext(\fk,L,\rho)$ and a Lie subgroup of $Gl(\fk)\times \Aut(L)$. We will always denote by $\fs$ the Lie algebra of $\mathcal{S}$. Then the projection to second component $pr_2$ is a Lie group homomorphism $\mathcal{S}\to \Aut(L)$ and we can form a semidirect product $L\rtimes_{pr_2}\mathcal{S}$ of Lie groups. The Lie algebra $\fl\oplus \fs$ of $L\rtimes_{pr_2}\mathcal{S}$ has the Lie bracket
$$[X+Z,Y+W]=\ad_\fl(X)(Y)+dpr_1(Z)(Y)-dpr_1(W)(X)+\ad_{\fs}(Z)(W)$$
for $X,Y\in \fl$ and $Z,W\in \fs$, where we use the fact that the elements $dpr_1(Z),dpr_1(W)\in \frak{gl}(\fk)$ preserve $\fl$. Moreover, if $\mathcal{S}$ is a Lie subgroup of the group $\{(\alpha,\id)\in Ext(\fk,L,\rho)\}$, then $L\rtimes_{pr_2}\mathcal{S}=L\times \mathcal{S}$ is a direct product of Lie groups. 

We can extend the action $\rho$ by the adjoint representation of $L\rtimes_{pr_2}\mathcal{S}$ to obtain skeleton $(\fk\oplus \frak{s},L\rtimes_{pr_2} \mathcal{S},\rho_{\mathcal{S}})$, where $$\rho_{\mathcal{S}}(l,(\alpha,i))(Y+W):=\rho(l)(\alpha(Y))+\Ad_{L\rtimes_{pr_2}\mathcal{S}}(l,(\alpha,i))(W)$$ for $(l,(\alpha,i))\in L\rtimes_{pr_2}\mathcal{S}$, $Y\in \fk$ and $W\in \frak{s}$. It holds $$d\rho_{\mathcal{S}}(X+Z)(Y+W)=d\rho(X)(Y)+dpr_1(Z)(Y)-dpr_1(W)(X)+\ad_{\fs}(Z)(W)$$ for $X\in \fl$, $Y\in \fk$ and $Z,W\in \fs$. Let us point out that in general, there is no compatible Lie bracket of $\fk$ such that $dpr_1(\fs)\subset \frak{gl}(\fk)$ acts by derivations on $\fk$.

As a next step of the construction of the functor $\mathcal{F}_{\mathcal{S}}$, we construct an extension functor according to the Proposition \ref{extinsc}.

\begin{prop}\label{extdescr}
Suppose $\mathcal{S}$ is a subgroup of $Ext(\fk,L,\rho)$ and a Lie subgroup of $Gl(\fk)\times \Aut(L)$. Then the natural inclusion $\iota: L\to L\rtimes_{pr_2}\mathcal{S}$ and $\alpha_\iota: \fk\to \fk\oplus \fs$ provide an injective extension $(\alpha_\iota,\iota)$ of $(\fk,L,\rho)$ to $(\fk\oplus \frak{s},L\rtimes_{pr_2}\mathcal{S},\rho_{\mathcal{S}})$ and $F: \mathcal{S}\to Ext(\fk\oplus \frak{s},L\rtimes_{pr_2}\mathcal{S},\rho_{\mathcal{S}})$ given as $$(\beta,j)\mapsto ((\beta,\Ad_{\mathcal{S}}(\beta,j),(j,\conj(\beta,j)))$$ is an injective homomorphism of Lie groups such that $F(\beta,j)\circ (\alpha_\iota,\iota)=(\alpha_\iota,\iota)\circ (\beta,j)$ holds for all $(\beta,j)\in \mathcal{S}$. In particular, $\mathcal{F}_{(\alpha_\iota,\iota),F}$ is a faithful extension functor from $\mathcal{SC}^{(\fk,L,\rho)}$ to $Ext(\fk\oplus \frak{s},L\rtimes_{pr_2}\mathcal{S},\rho_{\mathcal{S}})\mathcal{C}^{(\fk\oplus \frak{s},L\rtimes_{pr_2}\mathcal{S},\rho_{\mathcal{S}})}$.
\end{prop}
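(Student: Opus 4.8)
The plan is to verify the three assertions of Proposition~\ref{extdescr} in turn, namely (i) that $(\al_\iota,\iota)$ is an injective extension of $(\fk,L,\rho)$ to $(\fk\oplus\fs,L\rtimes_{pr_2}\mathcal{S},\rho_{\mathcal{S}})$, (ii) that $F$ is a well--defined injective Lie group homomorphism into $Ext(\fk\oplus\fs,L\rtimes_{pr_2}\mathcal{S},\rho_{\mathcal{S}})$ satisfying the intertwining relation $F(\beta,j)\circ(\al_\iota,\iota)=(\al_\iota,\iota)\circ(\beta,j)$, and (iii) that the resulting extension functor $\mathcal{F}_{(\al_\iota,\iota),F}$ supplied by Proposition~\ref{extinsc} is faithful. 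For (i), I would check directly against the Definition of morphism/extension of skeletons: $\iota:L\to L\rtimes_{pr_2}\mathcal{S}$ is the standard inclusion of the first factor (hence an injective Lie group homomorphism), $\al_\iota:\fk\to\fk\oplus\fs$ is the inclusion of the first summand, it extends $d\iota:\fl\to\fl\oplus\fs$, and the equivariance $\al_\iota\circ\rho(l)=\rho_{\mathcal{S}}(\iota(l))\circ\al_\iota$ is immediate from the defining formula for $\rho_{\mathcal{S}}$, since $\rho_{\mathcal{S}}(l,(\id,\id))(Y+0)=\rho(l)(Y)$. It is an extension because $\al_\iota$ induces the identity $\fk/\fl\to(\fk\oplus\fs)/(\fl\oplus\fs)$, which is an isomorphism.

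For (ii), the key point is that $F(\beta,j)=\big((\beta,\Ad_{\mathcal{S}}(\beta,j)),(j,\conj(\beta,j))\big)$ really lands in $Ext(\fk\oplus\fs,L\rtimes_{pr_2}\mathcal{S},\rho_{\mathcal{S}})$. Here the linear part $\beta\oplus\Ad_{\mathcal{S}}(\beta,j)\in Gl(\fk\oplus\fs)$ and the Lie group part is the automorphism of $L\rtimes_{pr_2}\mathcal{S}$ acting by $j$ on $L$ and by conjugation with $(\beta,j)$ on $\mathcal{S}$ — this is an automorphism precisely because $pr_2$ intertwines $\conj(\beta,j)$ on $\mathcal{S}$ with $j$ on $\Aut(L)$, i.e.\ $pr_2(\conj(\beta,j)(\gamma,k))=j\circ pr_2(\gamma,k)\circ j^{-1}$, which holds since $(\beta,j)$ and $(\gamma,k)$ are composed as pairs. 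One then checks the two skeleton--extension axioms for $F(\beta,j)$ relative to $\rho_{\mathcal{S}}$, which reduce to: $\beta$ normalizes $\rho(L)$ (true since $(\beta,j)\in Ext(\fk,L,\rho)$) and $\Ad_{\mathcal{S}}(\beta,j)$ is the adjoint action on $\fs$ of the group element, so differentiating the Lie-group part recovers the required linear action on $\fk\oplus\fs$. Injectivity of $F$ is clear from the first coordinate $\beta$, and homomorphism property from the product rules for $\Ad_{\mathcal{S}}$ and $\conj$. The intertwining relation $F(\beta,j)\circ(\al_\iota,\iota)=(\al_\iota,\iota)\circ(\beta,j)$ is a direct computation: on the linear side, $(\beta\oplus\Ad_{\mathcal{S}}(\beta,j))\circ\al_\iota = \al_\iota\circ\beta$ since $\al_\iota$ is the inclusion of the first summand and $\beta$ is the first-summand part; on the group side, the automorphism of $L\rtimes_{pr_2}\mathcal{S}$ determined by $F(\beta,j)$ restricted along $\iota:L\to L\rtimes_{pr_2}\mathcal{S}$ is just $j$, which equals $\iota\circ j$.

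Given (i) and (ii), part (iii) is essentially automatic: Proposition~\ref{extinsc} already hands us the extension functor $\mathcal{F}_{(\al_\iota,\iota),F}$, so only faithfulness remains. For this I would invoke Proposition~\ref{altdef}, which states that an extension functor $\mathcal{F}_{(\al,i)}$ is faithful whenever $i$ is injective; here $\iota$ is injective, so $\mathcal{F}_{(\al_\iota,\iota)}$ is faithful on ordinary morphisms, and since $\mathcal{F}_{(\al_\iota,\iota),F}$ acts as $\mathcal{F}_{(\al_\iota,\iota)}$ on objects and sends $\Phi\circ\mathcal{F}_{(\beta,j)}$ to $\mathcal{F}_{(\al_\iota,\iota)}(\Phi)\circ\mathcal{F}_{F(\beta,j)}$ with $F$ injective and each $\mathcal{F}_{F(\beta,j)}$ distinct, no two distinct $\mathcal{S}$--morphisms can have the same image. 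I expect the main obstacle to be bookkeeping in (ii): correctly unwinding the semidirect-product automorphism $(j,\conj(\beta,j))$ and verifying that $\big(\beta\oplus\Ad_{\mathcal{S}}(\beta,j),\,(j,\conj(\beta,j))\big)$ satisfies the equivariance axiom against $\rho_{\mathcal{S}}$ — in particular keeping straight which pieces come from $\rho$, which from $\Ad_{L\rtimes_{pr_2}\mathcal{S}}$, and that the mixed term $-dpr_1(W)(X)$ in the bracket formula for $\fl\oplus\fs$ is respected. Everything else is routine verification against the definitions recalled in Section~\ref{sec1}.
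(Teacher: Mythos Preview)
Your proposal is correct and follows essentially the same route as the paper: both verify directly that $(\al_\iota,\iota)$ is an injective extension by construction, check by straightforward computation that $F(\beta,j)$ lands in $Ext(\fk\oplus\fs,L\rtimes_{pr_2}\mathcal{S},\rho_{\mathcal{S}})$ and is a homomorphism, observe the intertwining relation from the fact that the components of $F(\beta,j)$ preserve the decompositions $\fk\oplus\fs$ and $L\rtimes_{pr_2}\mathcal{S}$, and then appeal to Proposition~\ref{extinsc}. Your explicit treatment of faithfulness via Proposition~\ref{altdef} together with injectivity of $F$ is in fact more detailed than the paper's, which simply attributes the remaining claims to Proposition~\ref{extinsc}.
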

\begin{proof}
It is obvious that $(\alpha_\iota,\iota)$ is injective extension by the construction of the skeleton $(\fk\oplus \frak{s},L\rtimes_{pr_2}\mathcal{S},\rho_{\mathcal{S}})$. Further, it is simple computation to check that $(\beta,j)\mapsto ((\beta,\Ad_{\mathcal{S}}(\beta,j)),(j,\conj(\beta,j)))$ is element of $Ext(\fk\oplus \frak{s},L\rtimes_{pr_2}\mathcal{S},\rho_{\mathcal{S}})$ for all $(\beta,j)\in \mathcal{S}$ and $F(\beta,j)\circ F(\bar \beta,\bar j)=((\beta,\Ad_{\mathcal{S}}(\beta,j))\circ(\bar \beta,\Ad_{\mathcal{S}}(\bar \beta,\bar j)),(j,\conj(\beta,j))\circ(\bar j,\conj(\bar \beta,\bar j)))=((\beta\circ\bar \beta,\Ad_{\mathcal{S}}(\beta,j)\circ\Ad_{\mathcal{S}}(\bar \beta,\bar j)),(j\circ \bar j,\conj(\beta,j)\circ \conj(\bar \beta,\bar j)))=F((\beta,j)\circ(\bar \beta,\bar j))$ holds for all $(\beta,j),(\bar \beta,\bar j)\in \mathcal{S}$. Therefore $F$ is a injective homomorphism of Lie groups and $F(\beta,j)\circ (\alpha_\iota,\iota)=(\alpha_\iota,\iota)\circ (\beta,j)$ holds, because the maps $(\beta,\Ad_{\mathcal{S}}(\beta,j))$ and $(j,\conj(\beta,j))$ preserve the decompositions $\fk\oplus \fs$ and $L\rtimes_{pr_2}\mathcal{S}$, respectively. Thus the remaining claims follow from the Proposition \ref{extinsc}.
\end{proof}

As the final step of  the construction of $\mathcal{F}_{\mathcal{S}}$, we modify the functor $\mathcal{F}_{(\alpha_\iota,\iota),F}$ from the Proposition \ref{extdescr}. For this, we observe an existence of a particular class of morphisms in the category $Ext(\fk\oplus \frak{s},L\rtimes_{pr_2}\mathcal{S},\rho_{\mathcal{S}})\mathcal{C}^{(\fk\oplus \frak{s},L\rtimes_{pr_2}\mathcal{S},\rho_{\mathcal{S}})}$.

\begin{lem}\label{nonefext}
The right multiplication by an element $(e,(\beta,j))\in L\rtimes_{pr_2}\mathcal{S}$ is an $((\beta^{-1},\Ad_{\mathcal{S}}(\beta,j)^{-1}),(j^{-1},\conj(\beta,j)^{-1}))$--automorphism on each Cartan geometry of type $(\fk\oplus \frak{s},L\rtimes_{pr_2}\mathcal{S},\rho_{\mathcal{S}})$. In particular, $$r^{(e,(\beta,j))}\circ \mathcal{F}_{(\alpha_\iota,\iota),F}(\Phi)\in \mathcal{C}^{(\fk\oplus \frak{s},L\rtimes_{pr_2}\mathcal{S},\rho_{\mathcal{S}})}$$ holds for all $(\beta,j)\in \mathcal{S}$ and each $(\beta,j)$--morphism $\Phi$ in $\mathcal{SC}^{(\fk,L,\rho)}$.
\end{lem}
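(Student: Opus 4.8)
The plan is to verify the first assertion directly from Definition~\ref{symdef2} and then to deduce the ``in particular'' clause formally, by the composition rule for the category $\mathcal{C}$. Write $g:=(e,(\beta,j))\in L\rtimes_{pr_2}\mathcal{S}$, and note first that, with $F$ as in Proposition~\ref{extdescr}, the extension named in the statement is precisely $F(\beta,j)^{-1}$: indeed $F(\beta,j)=((\beta,\Ad_{\mathcal{S}}(\beta,j)),(j,\conj(\beta,j)))$ and $F$ is a group homomorphism, so $F(\beta,j)^{-1}=((\beta^{-1},\Ad_{\mathcal{S}}(\beta,j)^{-1}),(j^{-1},\conj(\beta,j)^{-1}))$.

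For the first assertion I would make two checks. First, that $r^{g}$ is a principal $(L\rtimes_{pr_2}\mathcal{S})$--bundle morphism over the group--automorphism component $(j^{-1},\conj(\beta,j)^{-1})$ of $F(\beta,j)^{-1}$: on any principal $P$--bundle and for any $p\in P$ the map $r^{p}$ covers the inner automorphism $\conj(p^{-1})$, so $r^{g}$ covers conjugation by $g^{-1}=(e,(\beta,j)^{-1})$, and a short computation in the semidirect product identifies this conjugation with the automorphism acting by $j^{-1}$ on the factor $L$ and by $\conj(\beta,j)^{-1}$ on the factor $\mathcal{S}$. Second, that $(r^{g})^{*}\om=(\beta^{-1},\Ad_{\mathcal{S}}(\beta,j)^{-1})\circ\om$: by the equivariance $(r^{l})^{*}\om=\rho_{\mathcal{S}}(l)^{-1}\circ\om$ of the Cartan connection one has $(r^{g})^{*}\om=\rho_{\mathcal{S}}(g)^{-1}\circ\om=\rho_{\mathcal{S}}(e,(\beta,j)^{-1})\circ\om$, and unwinding the definition of $\rho_{\mathcal{S}}$ on an argument with trivial $L$--component, together with $\Ad_{L\rtimes_{pr_2}\mathcal{S}}(e,s)|_{\frak{s}}=\Ad_{\mathcal{S}}(s)$, shows that $\rho_{\mathcal{S}}(e,(\beta,j)^{-1})$ acts as $\beta^{-1}$ on $\fk$ and as $\Ad_{\mathcal{S}}(\beta,j)^{-1}$ on $\frak{s}$, i.e.\ it equals the linear component of $F(\beta,j)^{-1}$. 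These two facts are exactly the requirements of Definition~\ref{symdef2} for $r^{g}$ to be an $F(\beta,j)^{-1}$--morphism from $(\ba\to M,\om)$ to itself; as $r^{g}$ is a diffeomorphism and $F(\beta,j)^{-1}\in Ext(\fk\oplus\frak{s},L\rtimes_{pr_2}\mathcal{S},\rho_{\mathcal{S}})$ is a bijective extension, $r^{g}$ is an $F(\beta,j)^{-1}$--automorphism.

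For the ``in particular'' clause I would first record that $\mathcal{F}_{(\alpha_\iota,\iota),F}$ carries a $(\beta,j)$--morphism to an $F(\beta,j)$--morphism: by Proposition~\ref{altdef} a $(\beta,j)$--morphism $\Phi$ of $\mathcal{SC}^{(\fk,L,\rho)}$ corresponds to a morphism $\bar\Phi\circ\mathcal{F}_{(\beta,j)}$ of $\mathcal{C}$ with $\bar\Phi$ an ordinary morphism of $\mathcal{C}^{(\fk,L,\rho)}$, and by the definition of the functor in Propositions~\ref{extinsc} and \ref{extdescr} its image is $\mathcal{F}_{(\alpha_\iota,\iota)}(\bar\Phi)\circ\mathcal{F}_{F(\beta,j)}$. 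Composing on the left with the $F(\beta,j)^{-1}$--automorphism $r^{g}$ and applying the composition rule for $\mathcal{C}$ recalled in the introduction, $r^{g}\circ\mathcal{F}_{(\alpha_\iota,\iota),F}(\Phi)$ becomes an $(F(\beta,j)^{-1}\circ F(\beta,j))$--morphism, i.e.\ an $(\id,\id)$--morphism, which is the same thing as an ordinary morphism in $\mathcal{C}^{(\fk\oplus\frak{s},L\rtimes_{pr_2}\mathcal{S},\rho_{\mathcal{S}})}$.

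I expect the only mildly delicate point to be the bookkeeping of inverses inside $L\rtimes_{pr_2}\mathcal{S}$ and inside $Ext(\fk\oplus\frak{s},L\rtimes_{pr_2}\mathcal{S},\rho_{\mathcal{S}})$ — concretely, the fact that $\rho_{\mathcal{S}}$ restricted to elements of the form $(e,s)$ is block--diagonal with respect to the decomposition $\fk\oplus\frak{s}$, equal to $pr_1(s)$ on $\fk$ and to $\Ad_{\mathcal{S}}(s)$ on $\frak{s}$; once that is in hand, both checks above and the final composition are routine.
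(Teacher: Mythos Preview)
Your proposal is correct and follows essentially the same route as the paper's own proof: the paper dispatches the first claim by saying it ``follows from the definition of Cartan geometries'' (exactly your equivariance/pullback check identifying $\rho_{\mathcal{S}}(e,(\beta,j))^{-1}$ and $\conj(g^{-1})$ with the two components of $F(\beta,j)^{-1}$), and it dispatches the second by citing Proposition~\ref{extdescr} (which gives that $\mathcal{F}_{(\alpha_\iota,\iota),F}(\Phi)$ is an $F(\beta,j)$--morphism, after which composing with your $F(\beta,j)^{-1}$--automorphism yields an ordinary morphism). Your write-up simply makes explicit the bookkeeping the paper leaves implicit.
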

\begin{proof}
The first claim follows from the definition of Cartan geometries. Then the second claim follow from the Proposition \ref{extdescr}.
\end{proof}

Since $\mathcal{F}_{(\alpha_\iota,\iota),F}(\Phi)\circ r^{(e,(\bar \beta,\bar j))}=r^{(e,(\beta,j)\circ (\bar \beta,\bar j)\circ (\beta,j)^{-1})}\circ \mathcal{F}_{(\alpha_\iota,\iota),F}(\Phi)$ holds for all $(\beta,j),(\bar \beta,\bar j)\in \mathcal{S}$ and each $(\beta,j)$--morphism $\Phi$ in $\mathcal{SC}^{(\fk,L,\rho)}$, the formula $r^{(e,(\beta,j))}\circ \mathcal{F}_{(\alpha_\iota,\iota),F}(\Phi)$ is compatible with the composition of $(\beta,j)$--morphisms with $(\beta',j')$--morphisms. Therefore we can define the following functor.

\begin{def*}\label{defskelpr}
We denote by $\mathcal{F}_{\mathcal{S}}$ the functor $\mathcal{SC}^{(\fk,L,\rho)}$ to  $\mathcal{C}^{(\fk\oplus \frak{s},L\rtimes_{pr_2}\mathcal{S},\rho_{\mathcal{S}})}$ defined as
$$\mathcal{F}_{\mathcal{S}}(\ba\to M,\om):=\mathcal{F}_{(\alpha_\iota,\iota)}(\ba\to M,\om)$$
for $(\ba\to M,\om)\in \mathcal{SC}^{(\fk,L,\rho)}$ and
$$\mathcal{F}_{\mathcal{S}}(\Phi):=r^{(e,(\beta,j))}\circ \mathcal{F}_{(\alpha_\iota,\iota),F}(\Phi)$$
for $(\beta,j)$--morphisms $\Phi$ in $\mathcal{SC}^{(\fk,L,\rho)}$.
\end{def*}

Thus it remains to show that the functor $\mathcal{F}_{\mathcal{S}}$ is full in order to relate the properties of categories $\mathcal{SC}^{(\fk,L,\rho)}$ and $\mathcal{C}^{(\fk\oplus \frak{s},L\rtimes_{pr_2}\mathcal{S},\rho_{\mathcal{S}})}$.

\begin{thm}\label{main1}
Suppose $\mathcal{S}$ is a subgroup of $Ext(\fk,L,\rho)$ and a Lie subgroup of $Gl(\fk)\times \Aut(L)$. Let $\Phi$ be a $(\beta,j)$--morphism from $(\ba\to M,\om)$ to $(\ba'\to M',\om')$ in $\mathcal{SC}^{(\fk,L,\rho)}$. Then $$\mathcal{F}_{\mathcal{S}}(\Phi)(r^{(l,(\bar \beta,\bar j))}\circ \iota(u))=r^{(j(l),(\beta,j)\circ(\bar \beta,\bar j))}\circ \iota(\Phi(u))$$
holds for all $u\in \ba$ and all $(l,(\bar \beta,\bar j))\in L\rtimes_{pr_2}\mathcal{S}$ and the functor $\mathcal{F}_{\mathcal{S}}$ is full.

In particular, $$\Aut(\mathcal{F}_{\mathcal{S}}(\ba\to M,\om))=\mathcal{F}_{\mathcal{S}}(\mathcal{S}\Aut(\ba,\om))$$ is a Lie group of dimension at most $dim(\Aut(\ba,\om))+dim(\frak{s})$.
\end{thm}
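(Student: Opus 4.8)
The plan is to establish the displayed formula for $\mathcal{F}_{\mathcal{S}}(\Phi)$ first (by unwinding the three-step construction of $\mathcal{F}_{\mathcal{S}}$), then to prove that $\mathcal{F}_{\mathcal{S}}$ is full (the substantive part), and finally to read off the statements about automorphism groups from fullness, faithfulness of $\mathcal{F}_{\mathcal{S}}$ and Theorem~\ref{Liegp}. Write $P:=L\rtimes_{pr_2}\mathcal{S}$ and $\tilde\om:=\om^{\alpha_\iota}$ for the Cartan connection on $\mathcal{F}_{\mathcal{S}}(\ba\to M,\om)=\ba\times_{\iota(L)}P$; recall that the inclusion $\iota\colon\ba\to\mathcal{F}_{\mathcal{S}}(\ba)$ satisfies $\iota(\ba)\cdot P=\mathcal{F}_{\mathcal{S}}(\ba)$ and $\iota^*\tilde\om=\alpha_\iota\circ\om$, which takes values in $\fk\subset\fk\oplus\fs$. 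By the definition of $\mathcal{F}_{\mathcal{S}}$ on morphisms (Definition~\ref{defskelpr}) together with Proposition~\ref{extinsc}, $\mathcal{F}_{\mathcal{S}}(\Phi)=r^{(e,(\beta,j))}\circ\mathcal{F}_{(\alpha_\iota,\iota)}(\tilde\Phi)\circ\mathcal{F}_{F(\beta,j)}$, where $\tilde\Phi$ is the $L$-bundle morphism corresponding to the $(\beta,j)$-morphism $\Phi$ via Proposition~\ref{altdef} (so $\tilde\Phi(\iota(u))=\Phi(u)$). Tracing $\iota(u)$ through these three constituents, using the pointwise description of extension functors in Theorem~\ref{cartoprinc} at the neutral group element and the identity $F(\beta,j)\circ(\alpha_\iota,\iota)=(\alpha_\iota,\iota)\circ(\beta,j)$, one gets $\mathcal{F}_{\mathcal{S}}(\Phi)(\iota(u))=r^{(e,(\beta,j))}\circ\iota(\Phi(u))$. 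Since $\mathcal{F}_{\mathcal{S}}(\Phi)$ is a morphism in $\mathcal{C}^{(\fk\oplus\fs,P,\rho_{\mathcal{S}})}$, hence a $P$-bundle morphism and thus $P$-equivariant, applying equivariance to $r^{(l,(\bar\beta,\bar j))}\circ\iota(u)$ and using $(e,(\beta,j))\cdot(l,(\bar\beta,\bar j))=(j(l),(\beta,j)\circ(\bar\beta,\bar j))$ in $P$ yields the displayed formula; I would double-check the bookkeeping by verifying directly that the right-hand side is $P$-equivariant and compatible with the $\rho_{\mathcal{S}}$-equivariance of $\tilde\om$.

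For fullness, let $\Psi$ be a morphism from $\mathcal{F}_{\mathcal{S}}(\ba\to M,\om)$ to $\mathcal{F}_{\mathcal{S}}(\ba'\to M',\om')$, i.e.\ a $P$-bundle morphism with $\Psi^*\tilde\om'=\tilde\om$. Every point of $\mathcal{F}_{\mathcal{S}}(\ba')$ has a unique expression $r^{(e,s)}\circ\iota(u')$ with $s\in\mathcal{S}$ and $u'\in\ba'$ (the $\mathcal{S}$-slot is intrinsic), so I may write, in a unique and smooth way, $\Psi(\iota(u))=r^{(e,s(u))}\circ\iota(\phi(u))$ with $s\colon\ba\to\mathcal{S}$ and $\phi\colon\ba\to\ba'$, and $\Psi$ is then determined by $s,\phi$ by $P$-equivariance. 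The argument runs in three steps. First, equivariance against $\{(l,\id):l\in L\}$, together with $r^{(l,\id)}\circ\iota=\iota\circ r^l$ and the semidirect multiplication, forces $s(r^lu)=s(u)$ and $\phi(r^lu)=r^{j_u(l)}\circ\phi(u)$ where $s(u)=(\beta_u,j_u)$; thus $s$ is $L$-invariant and descends to a smooth $\bar s\colon M\to\mathcal{S}$. Second, pulling $\tilde\om'$ back along $\Psi\circ\iota$: a direct computation using the right-equivariance of $\tilde\om'$ and $\tilde\om'(\zeta_W)=W$ for $W\in\fs$ shows that the $\fs$-component of $(\Psi\circ\iota)^*\tilde\om'$ equals the (left) logarithmic derivative of $s$, while $(\Psi\circ\iota)^*\tilde\om'=\iota^*\tilde\om=\alpha_\iota\circ\om$ has vanishing $\fs$-component; hence $s$ is locally constant, so $\bar s$ is locally constant, so $s\equiv(\beta,j)$ for some $(\beta,j)\in\mathcal{S}$ by connectedness of $M$. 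Third, with $s$ constant the first step says $\phi$ is a principal bundle morphism over $j\colon L\to L$, and the $\fk$-component of the same pullback identity reads $\phi^*\om'=\beta\circ\om$; hence $\Phi:=\phi$ is a $(\beta,j)$-morphism in $\mathcal{SC}^{(\fk,L,\rho)}$. By the formula above $\mathcal{F}_{\mathcal{S}}(\Phi)$ and $\Psi$ agree on $\iota(\ba)$ and both are $P$-equivariant, so $\mathcal{F}_{\mathcal{S}}(\Phi)=\Psi$, and $\mathcal{F}_{\mathcal{S}}$ is full.

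Faithfulness of $\mathcal{F}_{\mathcal{S}}$ is immediate from the formula, which shows that $\mathcal{F}_{\mathcal{S}}(\Phi)$ determines both $(\beta,j)$ and $\Phi$ (again by uniqueness of the canonical form of a point of $\mathcal{F}_{\mathcal{S}}(\ba')$); alternatively it follows from the faithfulness of $\mathcal{F}_{(\alpha_\iota,\iota),F}$ in Proposition~\ref{extdescr} post-composed with the diffeomorphism $r^{(e,(\beta,j))}$. Specializing fullness to $\ba'=\ba$, $M'=M$: any $\Psi\in\Aut(\mathcal{F}_{\mathcal{S}}(\ba\to M,\om))$ is $\mathcal{F}_{\mathcal{S}}(\Phi)$ for a $(\beta,j)$-morphism $\Phi\colon\ba\to\ba$ with $(\beta,j)\in\mathcal{S}$; writing also $\Psi^{-1}=\mathcal{F}_{\mathcal{S}}(\Phi')$, the composition rule for $\mathcal{S}$-morphisms together with faithfulness give $\Phi\circ\Phi'=\Phi'\circ\Phi=\id$, so $\Phi$ is a $(\beta,j)$-automorphism, and conversely $\mathcal{F}_{\mathcal{S}}$ carries $\mathcal{S}$-automorphisms to automorphisms by functoriality. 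Hence $\mathcal{F}_{\mathcal{S}}$ restricts to a group isomorphism $\mathcal{S}\Aut(\ba,\om)\xrightarrow{\ \sim\ }\Aut(\mathcal{F}_{\mathcal{S}}(\ba\to M,\om))$; the latter is a Lie group by Theorem~\ref{Liegp} applied to the Cartan geometry $\mathcal{F}_{\mathcal{S}}(\ba\to M,\om)$ of type $(\fk\oplus\fs,P,\rho_{\mathcal{S}})$, so $\mathcal{S}\Aut(\ba,\om)$ is a Lie group, and the bound $\dim\le\dim\Aut(\ba,\om)+\dim\fs$ follows from the natural homomorphism $\mathcal{S}\Aut(\ba,\om)\to\mathcal{S}$ with kernel $\Aut(\ba,\om)$ noted after Definition~\ref{symdef1}.

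I expect the main obstacle to be the second step of the fullness argument: extracting, from the single identity $\Psi^*\tilde\om'=\tilde\om$ restricted along $\iota$, that the $\mathcal{S}$-valued function $s$ is constant. This hinges on correctly identifying the $\fs$-component of the pulled-back Cartan connection with the logarithmic derivative of $s$ --- which uses the fact, built into the construction of $(\fk\oplus\fs,P,\rho_{\mathcal{S}})$, that the $\fs$-directions at points off $\iota(\ba)$ are exactly fundamental vector fields for $\fs\subset\fk\oplus\fs$ --- followed by the descent to the connected base $M$; setting up the canonical smooth decomposition $\Psi(\iota(u))=r^{(e,s(u))}\circ\iota(\phi(u))$ and checking its well-definedness is the other point requiring care.
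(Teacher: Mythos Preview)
Your proof is correct. The derivation of the displayed formula and the automorphism group claims follow essentially the same lines as the paper. For fullness the paper argues slightly differently: rather than introducing global functions $s\colon\ba\to\mathcal{S}$ and $\phi\colon\ba\to\ba'$ and then showing $\delta^L s=0$ from the vanishing $\fs$-component of $(\Psi\circ\iota)^*\tilde\om'$, the paper fixes a single point $u$, writes $\tilde\Phi(\iota(u))=r^{(e,(\beta,j))}\circ\iota(\Phi(u))$ there, and propagates this canonical form along flows of constant vector fields $\om^{-1}(X)$ for $X\in\fk$ --- using that $\tilde\Phi$ intertwines flows of $(\mathcal{F}_{\mathcal{S}}\om)^{-1}(\alpha_\iota(X))$ and that $Tr^{(e,(\beta,j))}$ carries these flows into $\iota(\ba')\cdot(e,(\beta,j))$ --- invoking connectedness of $M$ at the end. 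These are the infinitesimal and the integrated versions of the same idea; your version makes the role of the $\fs$-component transparent, while the paper's avoids the auxiliary decomposition and the attendant smoothness checks for $s$ and $\phi$. For the dimension bound, the paper simply cites Theorem~\ref{Liegp}; your explicit use of the homomorphism $\mathcal{S}\Aut(\ba,\om)\to\mathcal{S}$ with kernel $\Aut(\ba,\om)$ is the cleaner justification of the specific estimate $\dim\Aut(\ba,\om)+\dim\fs$.
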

\begin{proof}
Let $u\in \ba$ and $(l,(\bar \beta,\bar j))\in L\rtimes_{pr_2}\mathcal{S}$. Then \begin{align*}
\mathcal{F}_{\mathcal{S}}(\Phi)(r^{(l,(\bar \beta,\bar j))}\circ \iota(u))&=r^{(e,(\beta,j))}\circ\mathcal{F}_{(\alpha_\iota,\iota),F}(\Phi)(r^{(l,(\bar \beta,\bar j))}\circ \iota(u))\\
&=r^{(e,(\beta,j))}\circ r^{(j(l),\conj(\beta,j)(\bar \beta,\bar j))}\circ \mathcal{F}_{(\alpha_\iota,\iota)}(\Phi)(\iota(u))\\
&=r^{(j(l),(\beta,j)\circ(\bar \beta,\bar j))}\circ \iota(\Phi(u))
\end{align*}
holds.

Conversely, suppose $\tilde \Phi$ is a morphism in $\mathcal{C}^{(\fk\oplus \frak{s},L\rtimes_{pr_2}\mathcal{S},\rho_{\mathcal{S}})}$ from $\mathcal{F}_{\mathcal{S}}(\ba\to M,\om)$ to $\mathcal{F}_{\mathcal{S}}(\ba'\to M',\om')$ satisfying $\tilde \Phi(\iota(u))=r^{(e,(\beta,j))}\circ \iota(\Phi(u))$ for some fixed $u\in \ba$. Then \begin{align*}
\tilde \Phi(\iota(Fl_t^{\om^{-1}(X)}(u)))&=\tilde \Phi(Fl_t^{(\mathcal{F}_{\mathcal{S}}(\om))^{-1}(\alpha_\iota(X))}(\iota(u)))=Fl_t^{(\mathcal{F}_{\mathcal{S}}(\om))^{-1}(\alpha_\iota(X))}(r^{(e,(\beta,j))}\circ \iota(\Phi(u)))\\
&=r^{(e,(\beta,j))}\circ \iota(Fl_t^{\om^{-1}(\beta(X))}(\Phi(u)))
\end{align*} 
holds for all $X\in \fk$, because $$Tr^{(e,(\beta,j))}(\mathcal{F}_{\mathcal{S}}(\om))^{-1}(\Ad_{L\rtimes_{pr_2}\mathcal{S}}(e,(\beta,j))(\alpha_\iota(X)))=Tr^{(e,(\beta,j))}\circ T\iota\circ \om^{-1}(\beta(X))$$
holds at all points of $\iota(\ba)$. Since $M$ is connected and $\tilde \Phi$ is $(L\rtimes_{pr_2} \mathcal{S})$--bundle morphism, the extension $(\beta,j)\in \mathcal{S}$ does not depend on the choice of $u\in \ba$ and $$\Phi:=\iota^{-1}\circ r^{(e,(\beta,j)^{-1})}\circ \tilde \Phi|_{\iota(\ba)}$$ 
is a well--defined map $\ba\to\ba'$. By definition, $(\Phi)^*(\om')=\beta\circ \om$ holds and $$\iota(\Phi(r^l\circ u))=r^{(e,(\beta,j)^{-1})}\circ r^{\iota(l)}\circ\tilde \Phi(\iota(u))= r^{\iota(j(l))}\circ r^{(e,(\beta,j)^{-1})}\circ\tilde \Phi(\iota(u))=\iota(r^{j(l)}\circ \Phi(u))$$ holds for all $l\in L$ and $u\in \ba$, thus $\Phi$  is a $(\beta,j)$--morphism from $(\ba\to M,\om)$ to $(\ba'\to M',\om')$. Therefore the functor $\mathcal{F}_{\mathcal{S}}$ is full.

If $(\ba\to M,\om)=(\ba'\to M',\om')$, then the claim $\Aut(\mathcal{F}_{\mathcal{S}}(\ba\to M,\om))\cong \mathcal{S}\Aut(\ba,\om)$ follows and the dimension estimate follows from the Theorem \ref{Liegp}.
\end{proof}

Let us point out that $\mathcal{F}_{\mathcal{S}}(\mathcal{SC}^{(\fk,L,\rho)})$ is only a full subcategory of $\mathcal{C}^{(\fk\oplus \frak{s},L\rtimes_{pr_2}\mathcal{S},\rho_{\mathcal{S}})}$, in general.

As we observed in Lemma \ref{nonefext}, if we consider the right action $r^{l}$ for $l\in L$, then we obtain a trivial $(\rho(l)^{-1},\conj(l)^{-1})$--automorphism of each Cartan geometry of type $(\fk,L,\rho)$. If $(\rho(l)^{-1},\conj(l)^{-1})\in \mathcal{S}$, then from Theorem \ref{main1} follows that
$$\mathcal{F}_{\mathcal{S}}(r^{l})(\iota(u))=r^{(l,(\rho(l)^{-1},\conj(l)^{-1}))}\circ \iota(u)$$ holds for all $u\in \ba.$ Therefore the kernel of $(\fk\oplus\frak{s},L\rtimes_{pr_2} \mathcal{S},\rho_{\mathcal{S}})$ is not trivial, in general.

Let us consider a general situation first and denote by $N$ the kernel of $(\fk,L,\rho)$. Then the projections $\pi_\fn: \fk\to \fk/\fn$ and $\pi_N: L\to L/N$ satisfy the conditions of an extension and thus there is a natural extension functor from $\mathcal{C}^{(\fk,L,\rho)}$ to $\mathcal{C}^{(\fk/\fn,L/N,\rho)}$. Thus from Theorem \ref{rigid} follows that the kernel of $\mathcal{F}_{(\pi_\fn,\pi_N)}$ on automorphisms are the trivial automorphisms in $\mathcal{C}^{(\fk,L,\rho)}$. Let us look on the situation in the categories $\mathcal{S}\mathcal{C}^{(\fk,L,\rho)}$.

\begin{lem}\label{funceff}
There is unique group homomorphisms $F: Ext(\fk,L,\rho)\to Ext(\fk/\fn,L/N,\rho)$ such that $\mathcal{F}_{(\pi_\fn,\pi_N)}\circ \mathcal{F}_{(\beta,j)}=\mathcal{F}_{F(\beta,j)}\circ \mathcal{F}_{(\pi_\fn,\pi_N)}$ holds for all $(\beta,j)\in Ext(\fk,L,\rho)$ and the kernel of $F$ is the normal subgroup $$\{(\beta,j)\in Ext(\fk,L,\rho): (\id-\beta)\fk\subset \fn\}.$$ In particular, we will use uniform notation $$\mathcal{F}_{eff}:=\mathcal{F}_{(\pi_\fn,\pi_N),F}: \mathcal{SC}^{(\fk,L,\rho)}\to F \mathcal{(S)C}^{(\fk/\fn,L/N,\rho)}$$ for all extension functors induced by $(\pi_\fn,\pi_N)$ and $F$.
\end{lem}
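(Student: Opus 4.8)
The plan is to build $F$ by descending the data along the projections $(\pi_\fn,\pi_N)$, so the first thing to verify is that every $(\beta,j)\in Ext(\fk,L,\rho)$ descends. The crucial observation is that $N$ coincides with $\{l\in L:(\id-\rho(l))\fk\subset\fn\}$: since $\rho|_\fl=\Ad_L$ preserves the ideal $\fn$, the subspace $\fn$ is $\rho(L)$--invariant, so $\rho$ induces a homomorphism $\bar\rho\colon L\to Gl(\fk/\fn)$, and $\{l:(\id-\rho(l))\fk\subset\fn\}$ is exactly $Ker(\bar\rho)$, hence a closed normal subgroup of $L$; it contains $N$ and satisfies the defining condition of the kernel, so it equals $N$ by maximality, and $\bar\rho$ descends to an injective homomorphism $L/N\hookrightarrow Gl(\fk/\fn)$. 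Now from $\beta\circ\rho(n)=\rho(j(n))\circ\beta$ one gets $(\id-\rho(j(n)))\fk=\beta\big((\id-\rho(n))\fk\big)\subset\beta(\fn)=dj(\fn)$, which is the Lie algebra of the normal subgroup $j(N)$; thus $j(N)$ is a normal subgroup satisfying the kernel condition, so $j(N)\subset N$ by maximality, and applying this also to $(\beta^{-1},j^{-1})\in Ext(\fk,L,\rho)$ yields $j(N)=N$, hence also $\beta(\fn)=\fn$. Therefore $\beta$ descends to $\bar\beta\in Gl(\fk/\fn)$ and $j$ descends to $\bar j\in\Aut(L/N)$, and descending the structure equations of an extension shows $(\bar\beta,\bar j)\in Ext(\fk/\fn,L/N,\rho)$. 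I set $F(\beta,j):=(\bar\beta,\bar j)$.

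That $F$ is a group homomorphism is immediate, since the passage to the descended maps respects composition and identities. For the intertwining identity, note that $\bar\beta\circ\pi_\fn=\pi_\fn\circ\beta$ and $\bar j\circ\pi_N=\pi_N\circ j$ by construction, so the morphisms of skeletons $(\pi_\fn,\pi_N)\circ(\beta,j)$ and $F(\beta,j)\circ(\pi_\fn,\pi_N)$ are both equal to $(\pi_\fn\circ\beta,\pi_N\circ j)$; applying $\mathcal{F}_{(\alpha,i)}\circ\mathcal{F}_{(\alpha',i')}=\mathcal{F}_{(\alpha\circ\alpha',i\circ i')}$ (the composition formula from the Introduction with trivial underlying morphisms), both $\mathcal{F}_{(\pi_\fn,\pi_N)}\circ\mathcal{F}_{(\beta,j)}$ and $\mathcal{F}_{F(\beta,j)}\circ\mathcal{F}_{(\pi_\fn,\pi_N)}$ equal $\mathcal{F}_{(\pi_\fn\circ\beta,\pi_N\circ j)}$. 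For uniqueness, I would use that an extension functor determines its underlying morphism of skeletons---the linear part is read off from $\iota^*(\om^\mu)=\mu\circ\om$ on any Cartan geometry of type $(\fk,L,\rho)$ (such exist), and the group part from the induced principal bundle structure on the associated bundle---so any homomorphism with the stated property must satisfy $F(\beta,j)\circ(\pi_\fn,\pi_N)=(\pi_\fn,\pi_N)\circ(\beta,j)$, and then surjectivity of $\pi_\fn$ and $\pi_N$ pins it down.

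For the kernel: if $(\beta,j)\in Ker(F)$ then $\bar\beta=\id$ and $\bar j=\id$, that is $(\id-\beta)\fk\subset\fn$ and $j(l)l^{-1}\in N$ for all $l\in L$. Conversely, the first condition already implies the second, since descending $\rho(j(l))=\beta\circ\rho(l)\circ\beta^{-1}$ and using $\bar\beta=\id$ gives $\bar\rho(j(l))=\bar\rho(l)$, whence $j(l)l^{-1}\in Ker(\bar\rho)=N$ by injectivity of the induced map $L/N\hookrightarrow Gl(\fk/\fn)$. Hence $Ker(F)=\{(\beta,j)\in Ext(\fk,L,\rho):(\id-\beta)\fk\subset\fn\}$, which is normal as the kernel of a homomorphism. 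Finally, the compatibility $F(\beta,j)\circ(\pi_\fn,\pi_N)=(\pi_\fn,\pi_N)\circ(\beta,j)$ established above is exactly the hypothesis of Proposition \ref{extinsc} for the extension $(\pi_\fn,\pi_N)$ and the homomorphism $F|_{\mathcal{S}}\colon\mathcal{S}\to F(\mathcal{S})$, so $\mathcal{F}_{eff}:=\mathcal{F}_{(\pi_\fn,\pi_N),F}$ is a well-defined extension functor from $\mathcal{SC}^{(\fk,L,\rho)}$ to $F(\mathcal{S})\mathcal{C}^{(\fk/\fn,L/N,\rho)}$.

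The step I expect to be the main obstacle is the well-definedness of the descent, i.e.\ the identities $j(N)=N$ and $\beta(\fn)=\fn$: everything hinges on the reformulation $N=\{l\in L:(\id-\rho(l))\fk\subset\fn\}=Ker(\bar\rho)$, after which the construction of $F$, the intertwining identity, and the kernel computation are all routine descent arguments.
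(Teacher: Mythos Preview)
Your proof is correct and follows essentially the same approach as the paper: both arguments establish $j(N)\subset N$ and $\beta(\fn)\subset\fn$ by checking that $j(N)$ is normal and satisfies the defining condition of the kernel, then invoking maximality of $N$, and both finish by appealing to Proposition~\ref{extinsc}. Your version is more complete---you make explicit the characterisation $N=Ker(\bar\rho)$, supply the uniqueness argument, and work out the kernel of $F$, all of which the paper's terse proof leaves implicit---but the core mechanism is identical.
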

\begin{proof}
To prove the first claim, it clearly suffices to show that $\beta(\fn)\subset \fn$ and $j(N)\subset N$ holds for all $(\beta,j)\in Ext(\fk,L,\rho)$, because then $F(\beta,j)=(\pi_\fn,\pi_N)\circ(\beta,j)\circ(\pi_\fn,\pi_N)^{-1}$ is well--defined. It holds $lj(n)l^{-1}=j(j^{-1}(l)nj^{-1}(l^{-1}))\in j(N)$ for $n\in N$ and $l\in L$, thus $j(N)$ is normal subgroup of $L$. Further, $(\id-\rho(j(n))=\beta\circ (\id-\rho(n))\circ \beta^{-1}$ holds for $n\in N$, thus $(\id-\rho(j(n))\fk\subset \beta(\fn)=dj(\fn)$. Thus $j(N)\subset N$ and $\beta(\fn)\subset \fn$ hold due to maximality of the kernel $N$.

Therefore the conditions of the Proposition \ref{extinsc} are satisfied and there are the extension functors $\mathcal{F}_{eff}$.
\end{proof}

We remark that the map $F: Ext(\fk,L,\rho)\to Ext(\fk/\fn,L/N,\rho)$ is not surjective, in general, because the automorphisms $j$ of $L/N$ does not have to be extendable to automorphisms of $L$ and there does not have to be compatible inclusion of $\beta\in Gl(\fk/\fn)$ into $Gl(\fk)$.

Thus we only need to investigate in the situation, when $(\fk,L,\rho)$ is effective, whether there are other trivial $\mathcal{S}$--automorphisms then $r^l$ for $l\in L$ and $(\rho(l)^{-1},\conj(l)^{-1})\in \mathcal{S}$ mentioned above.

\begin{thm}\label{effective}
Suppose that $(\fk,L,\rho)$ is effective and assume that $\mathcal{S}$ is a subgroup of $Ext(\fk,L,\rho)$ and a Lie subgroup of $Gl(\fk)\times \Aut(L)$, and denote by  $N_\mathcal{S}$ the kernel of $(\fk\oplus\frak{s},L\rtimes_{pr_2} \mathcal{S},\rho_{\mathcal{S}})$.

An element $(l,(\beta,j))\in L\rtimes_{pr_2} \mathcal{S}$ is in $N_\mathcal{S}$ if and only if $(\id -\rho(l)\circ \beta)\fk=0$ holds.

In particular, $$N_{\mathcal{S}}=\{(l,(\rho(l^{-1}),\conj(l)^{-1})): l\in L, (\rho(l^{-1}),\conj(l)^{-1})\in \mathcal{S}\}$$ and $$\mathcal{S}\Aut(\ba,\om)/ \{r^l: (\rho(l^{-1}),\conj(l)^{-1})\in \mathcal{S}\} \cong \Aut(\mathcal{F}_{eff}\circ \mathcal{F}_{\mathcal{S}}(\ba\to M,\om)).$$
\end{thm}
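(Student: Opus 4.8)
The plan is to establish the displayed equivalence first; both ``in particular'' assertions then follow quickly. Indeed, $(\id-\rho(l)\circ\beta)\fk=0$ is the same as $\rho(l)\circ\beta=\id_{\fk}$, i.e.\ $\beta=\rho(l^{-1})$; as $(\beta,j)\in\mathcal{S}\subseteq Ext(\fk,L,\rho)$ satisfies $\beta\circ\rho(h)=\rho(j(h))\circ\beta$ and $\rho$ is injective (the skeleton being effective), this forces $j(h)=l^{-1}hl$, exactly as in the proof of Proposition \ref{extcomp}; conversely $(\beta,j)=(\rho(l^{-1}),\conj(l)^{-1})$ gives $\rho(l)\circ\beta=\id_{\fk}$. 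So the equivalence yields the stated description of $N_{\mathcal{S}}$. For the second assertion, write $\mathcal{F}_{eff}=\mathcal{F}_{(\pi_{\fn_{\mathcal{S}}},\pi_{N_{\mathcal{S}}})}$ for the effectivization of $(\fk\oplus\frak{s},L\rtimes_{pr_2}\mathcal{S},\rho_{\mathcal{S}})$, where $\fn_{\mathcal{S}}$ denotes the Lie algebra of $N_{\mathcal{S}}$. By Theorem \ref{main1}, $\mathcal{F}_{\mathcal{S}}$ identifies $\mathcal{S}\Aut(\ba,\om)$ with $\Aut(\mathcal{F}_{\mathcal{S}}(\ba\to M,\om))$; composing with $\mathcal{F}_{eff}$ gives a group homomorphism onto $\Aut(\mathcal{F}_{eff}\circ\mathcal{F}_{\mathcal{S}}(\ba\to M,\om))$ (surjectivity by the lifting argument behind \cite[Proposition 1.5.3]{parabook}) whose kernel, by Theorem \ref{rigid}, consists of the trivial automorphisms of $\mathcal{F}_{\mathcal{S}}(\ba\to M,\om)$. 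By fullness in Theorem \ref{main1} every such trivial automorphism is $\mathcal{F}_{\mathcal{S}}(\Phi)$ for some $(\beta,j)$--automorphism $\Phi$ with $(\beta,j)\in\mathcal{S}$; since $\mathcal{F}_{\mathcal{S}}(\Phi)$ covers $\id_M$ we get $\Phi(u)=u\cdot h(u)$ for a smooth $h\colon\ba\to L$, hence $\mathcal{F}_{\mathcal{S}}(\Phi)(\iota(u))=r^{(h(u),(\beta,j))}\circ\iota(u)$, and Theorem \ref{rigid} forces $(h(u),(\beta,j))\in N_{\mathcal{S}}$; by the equivalence $(\beta,j)=(\rho(h(u)^{-1}),\conj(h(u))^{-1})$, so $h$ is constant (injectivity of $\rho$) and $\Phi=r^{h}$ with $(\rho(h^{-1}),\conj(h)^{-1})\in\mathcal{S}$. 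Thus the kernel is exactly $\{r^l\colon(\rho(l^{-1}),\conj(l)^{-1})\in\mathcal{S}\}$ and the isomorphism follows.

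It remains to prove the equivalence. For the direction ``$\Rightarrow$'', let $(l,(\beta,j))\in N_{\mathcal{S}}$, so $(\id-\rho_{\mathcal{S}}(l,(\beta,j)))(\fk\oplus\frak{s})\subseteq\fn_{\mathcal{S}}$. Restricting to $\fk\oplus 0$ and using $\rho_{\mathcal{S}}(l,(\beta,j))|_{\fk}=\rho(l)\circ\beta$ gives $(\id-\rho(l)\circ\beta)\fk\subseteq\fn_{\mathcal{S}}\cap(\fk\oplus 0)$. I claim this intersection is $0$. Since $\fn_{\mathcal{S}}\subseteq\fl\oplus\frak{s}$, it equals $\fn_{\mathcal{S}}\cap(\fl\oplus 0)$, which is the Lie algebra $\fn_0$ of $N_0:=\{l'\in L\colon(l',e)\in N_{\mathcal{S}}\}$. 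Because the projection $L\rtimes_{pr_2}\mathcal{S}\to\mathcal{S}$ has kernel the normal subgroup $\{(l',e)\colon l'\in L\}\cong L$, the group $N_0$ is normal in $L$; and for $l'\in N_0$, restricting $(\id-\rho_{\mathcal{S}}(l',e))(\fk\oplus\frak{s})\subseteq\fn_{\mathcal{S}}$ to $\fk\oplus 0$ gives $(\id-\rho(l'))\fk\subseteq\fn_{\mathcal{S}}\cap(\fl\oplus 0)=\fn_0$. Hence $N_0$ is a normal subgroup of $L$ with the defining property of the kernel of $(\fk,L,\rho)$, so $N_0=\{e\}$ by effectiveness, whence $\fn_{\mathcal{S}}\cap(\fk\oplus 0)=0$ and $(\id-\rho(l)\circ\beta)\fk=0$.

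For ``$\Leftarrow$'', suppose $(\id-\rho(l)\circ\beta)\fk=0$; then $(\beta,j)=(\rho(l^{-1}),\conj(l)^{-1})$ as above, and $(\rho(l^{-1}),\conj(l)^{-1})\in\mathcal{S}$ because $(l,(\beta,j))\in L\rtimes_{pr_2}\mathcal{S}$. Fix any Cartan geometry $(\ba\to M,\om)$ of type $(\fk,L,\rho)$ (such exist, e.g.\ on a trivial $L$--bundle over $\fk/\fl$). As recorded after Lemma \ref{nonefext}, $r^l\in\mathcal{S}\Aut(\ba,\om)$ and $\mathcal{F}_{\mathcal{S}}(r^l)$ is an automorphism of $\mathcal{F}_{\mathcal{S}}(\ba\to M,\om)$ over $\id_M$ with $\mathcal{F}_{\mathcal{S}}(r^l)(\iota(u))=r^{(l,(\rho(l^{-1}),\conj(l)^{-1}))}\circ\iota(u)$. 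Applying Theorem \ref{rigid} to $\mathcal{F}_{\mathcal{S}}(r^l)$ and $\id$ (both over $\id_M$) gives a smooth map $\psi$ into $N_{\mathcal{S}}$ with $\mathcal{F}_{\mathcal{S}}(r^l)(\tilde u)=r^{\psi(\tilde u)}\tilde u$; evaluating at $\tilde u=\iota(u)$ and using freeness of the principal action gives $\psi(\iota(u))=(l,(\rho(l^{-1}),\conj(l)^{-1}))\in N_{\mathcal{S}}$, i.e.\ $(l,(\beta,j))\in N_{\mathcal{S}}$.

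The genuinely non-formal input is the direction ``$\Rightarrow$'': the point is to upgrade ``$(\id-\rho(l)\circ\beta)\fk\subseteq\fn_{\mathcal{S}}$'' to ``$=0$'' by locating the copy of $L$ inside $L\rtimes_{pr_2}\mathcal{S}$ and invoking effectiveness of $(\fk,L,\rho)$. (One can instead prove ``$\Leftarrow$'' by checking directly that $\{(l,(\rho(l^{-1}),\conj(l)^{-1}))\colon(\rho(l^{-1}),\conj(l)^{-1})\in\mathcal{S}\}$ is a normal subgroup of $L\rtimes_{pr_2}\mathcal{S}$ with the kernel property, using the extension relation $\beta\circ\rho=\rho\circ j$ and the explicit form of $\rho_{\mathcal{S}}$ and $\Ad_{L\rtimes_{pr_2}\mathcal{S}}$; the Cartan--geometric argument above avoids unwinding the adjoint action on the semidirect product.)
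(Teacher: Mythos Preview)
Your proof is correct and your ``$\Rightarrow$'' direction is essentially the paper's argument, only spelled out in more detail: both reduce to showing $\fn_{\mathcal{S}}\cap\fk=0$ via effectiveness of $(\fk,L,\rho)$, and you make explicit the normal subgroup $N_0\subset L$ that the paper leaves implicit.

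The ``$\Leftarrow$'' direction is where you diverge. The paper argues purely algebraically: it sets $N':=\{(l,(\beta,j)):(\id-\rho(l)\circ\beta)\fk=0\}$, observes that this condition is stable under conjugation in $L\rtimes_{pr_2}\mathcal{S}$, so $N'$ is normal, and then invokes maximality of the kernel to get $N'\subseteq N_{\mathcal{S}}$ (leaving the verification that $N'$ actually satisfies the kernel condition on $\fk\oplus\fs$ to the reader). You instead choose an auxiliary Cartan geometry, note that $r^l$ is a trivial $\mathcal{S}$--automorphism, and apply Theorem~\ref{rigid} to the pair $\mathcal{F}_{\mathcal{S}}(r^l)$ and $\id$ to read off $(l,(\rho(l^{-1}),\conj(l)^{-1}))\in N_{\mathcal{S}}$ directly. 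Your route is neat in that it bypasses any computation with $\Ad_{L\rtimes_{pr_2}\mathcal{S}}$ on $\fs$; the paper's route is more self--contained but, as written, suppresses the check that $(\id-\rho_{\mathcal{S}}(n'))\fs\subset\fn'$ for $n'\in N'$. Your parenthetical remark at the end correctly identifies the paper's approach as the alternative.

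For the final isomorphism, both you and the paper identify the kernel as the trivial $\mathcal{S}$--automorphisms $r^l$; neither gives a full argument for surjectivity of $\Aut(\mathcal{F}_{\mathcal{S}}(\ba,\om))\to\Aut(\mathcal{F}_{eff}\circ\mathcal{F}_{\mathcal{S}}(\ba,\om))$. Your appeal to ``the lifting argument behind \cite[Proposition~1.5.3]{parabook}'' is not quite the right citation (that result is about uniqueness up to $N$--valued gauge, not about lifting automorphisms through the quotient by $N$), so if you want to be fully rigorous here you should either supply the lift directly or note that in this particular situation $\mathcal{F}_{\mathcal{S}}(\ba)\to\mathcal{F}_{eff}\circ\mathcal{F}_{\mathcal{S}}(\ba)$ admits a section compatible with the structure.
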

\begin{proof}
It holds that $\rho_{\mathcal{S}}(L\rtimes_{pr_2} \mathcal{S})\fk\subset \fk$. Thus $(\id-\rho_{\mathcal{S}}(l,(\beta,j)))\fk\subset \fn_\mathcal{S}\cap \fk$ holds for $(l,(\beta,j))\in N_\mathcal{S}$ and thus $\fn_\mathcal{S}\cap \fk=0$. Therefore $(\id -\rho(l)\circ \beta)\fk=0$ holds for all $(l,(\beta,j))\in N_\mathcal{S}$

Conversely, consider $N':=\{(l,(\beta,j))\in L\rtimes_{pr_2} \mathcal{S}: (\id -\rho(l)\circ \beta)\fk=0\}$. Since the condition $(\id -\rho(l)\circ \beta)\fk=0$ is stable under the conjugation of the element $(l,(\beta,j))$ by elements of $L\rtimes_{pr_2} \mathcal{S}$, the subgroup $N'$ is normal subgroup of $L\rtimes_{pr_2} \mathcal{S}$ containing $N_{\mathcal{S}}$ and we get $N'=N_{\mathcal{S}}$ by the maximality of the kernel.

If $\id=\rho(l)\circ \beta$ holds for $(l,(\beta,j))\in N_{\mathcal{S}}$, then $(\beta,j)=(\rho(l^{-1}),\conj(l)^{-1})$ follows from the Proposition \ref{extcomp} and therefore the $\mathcal{S}$--automorphisms $r^l$ for $l\in L$ and $(\rho(l)^{-1},\conj(l)^{-1})\in \mathcal{S}$ exhaust all trivial automorphisms in $\mathcal{S}\Aut(\ba,\om)$.
\end{proof}

We remark that in order to get a splitting $\Aut(\mathcal{F}_{eff}\circ \mathcal{F}_{\mathcal{S}}(\ba\to M,\om))\to \mathcal{S}\Aut(\ba,\om)$, we need to choose a splitting $\mathcal{S}/(\{(\rho(l),\conj(l)),l\in L \}\cap \mathcal{S})\to \mathcal{S}$.

\section{Computation of the automorphism groups}\label{sec5}

Let us start with a Cartan geometry $(\ba\to M,\om)$ of type $(\fk,L,\rho)$ and suppose $\mathcal{S}$ is a subgroup of $Ext(\fk,L,\rho)$ and a Lie subgroup of $Gl(\fk)\times \Aut(L)$. We can modify the method in \cite[Lemma 1.5.12]{parabook} for the computation of infinitesimal automorphisms and compute the Lie algebra of the Lie group $\mathcal{S}\Aut(\ba,\om)$. We consider the bundle $$\mathcal{A}_{\mathcal{S}}M:=\mathcal{F}_{\mathcal{S}}(\ba)\times_{\rho_{\mathcal{S}}}(\fk\oplus \fs),$$ because there is bijection between sections of $\mathcal{A}_{\mathcal{S}}M$ and $L\rtimes_{pr_2} \mathcal{S}$--invariant vector fields on $\mathcal{F}_{\mathcal{S}}(\ba)$ provided by the Cartan connection $\mathcal{F}_{\mathcal{S}}(\om)$. This means that the infinitesimal automorphisms of $\mathcal{F}_{\mathcal{S}}(\ba\to M,\om)$ can be viewed as sections of $\mathcal{A}_{\mathcal{S}}M$ and the formula in Theorem \ref{Liegp} leads to the following characterization of infinitesimal automorphisms.

\begin{prop}\label{autcomp0}
Let $(\ba\to M,\om)$ be a Cartan geometry of type $(\fk,L,\rho)$ and suppose $\mathcal{S}$ is a subgroup of $Ext(\fk,L,\rho)$ and a Lie subgroup of $Gl(\fk)\times \Aut(L)$. Then the infinitesimal automorphism of $\mathcal{F}_{\mathcal{S}}(\ba\to M,\om)$ are sections of  $\mathcal{A}_{\mathcal{S}}M$ parallel w.r.t. to the unique linear connection $\nabla$ on $\mathcal{A}_{\mathcal{S}}M$ satisfying $$(T\Pi)^*(\nabla s)(\mathcal{F}_{\mathcal{S}}\om)^{-1}(t))=D_{t}s-d(\mathcal{F}_{\mathcal{S}}\om)(\mathcal{F}_{\mathcal{S}}\om)^{-1}(t),(\mathcal{F}_{\mathcal{S}}\om)^{-1}(s))$$ for all sections $t,s$ of $\mathcal{A}_{\mathcal{S}}M$, where $\Pi: \mathcal{A}_{\mathcal{S}}M\to TM$ is the natural projection and the fundamental derivative $D_{t}s(v)$ is given by the directional derivative of $s$ in direction of $(\mathcal{F}_{\mathcal{S}}\om)^{-1}(t(v))$ at $v\in \mathcal{F}_{\mathcal{S}}(\ba)$.
\end{prop}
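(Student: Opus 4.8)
The statement to prove, Proposition \ref{autcomp0}, characterizes infinitesimal automorphisms of $\mathcal{F}_{\mathcal{S}}(\ba\to M,\om)$ as parallel sections of a specific linear connection on $\mathcal{A}_{\mathcal{S}}M$.

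My plan is to reduce everything to the known case. By Theorem \ref{Liegp}, applied to the Cartan geometry $\mathcal{F}_{\mathcal{S}}(\ba\to M,\om)$ of type $(\fk\oplus\fs, L\rtimes_{pr_2}\mathcal{S},\rho_{\mathcal{S}})$, the infinitesimal automorphisms are precisely the $(L\rtimes_{pr_2}\mathcal{S})$-invariant vector fields $\xi$ on $\mathcal{F}_{\mathcal{S}}(\ba)$ with $\mathcal{L}_\xi(\mathcal{F}_{\mathcal{S}}\om)=0$. Via the Cartan connection $\mathcal{F}_{\mathcal{S}}\om$, such invariant vector fields are in bijective correspondence with sections $s$ of the adjoint-type bundle $\mathcal{A}_{\mathcal{S}}M = \mathcal{F}_{\mathcal{S}}(\ba)\times_{\rho_{\mathcal{S}}}(\fk\oplus\fs)$ — namely $\xi \leftrightarrow s$ where $s$ is the equivariant function $(\mathcal{F}_{\mathcal{S}}\om)(\xi)$. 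So the task is to translate the condition $\mathcal{L}_\xi(\mathcal{F}_{\mathcal{S}}\om)=0$ into a condition on $s$.

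The key computational step is the standard Cartan-formula identity: for an invariant vector field $\xi$ corresponding to equivariant function $s$, and any $t\in\fk\oplus\fs$ with horizontal-type lift $(\mathcal{F}_{\mathcal{S}}\om)^{-1}(t)$, one has
$$\mathcal{L}_\xi(\mathcal{F}_{\mathcal{S}}\om)\big((\mathcal{F}_{\mathcal{S}}\om)^{-1}(t)\big) = D_t s - d(\mathcal{F}_{\mathcal{S}}\om)\big((\mathcal{F}_{\mathcal{S}}\om)^{-1}(t),(\mathcal{F}_{\mathcal{S}}\om)^{-1}(s)\big),$$
which is exactly the expression appearing on the right-hand side of the displayed formula in the statement. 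This is essentially the content of \cite[Lemma 1.5.12]{parabook}, which goes through verbatim since its proof uses only the defining properties of a Cartan connection of a given type and never the Lie-theoretic origin of the model; in particular it applies to the skeleton $(\fk\oplus\fs, L\rtimes_{pr_2}\mathcal{S},\rho_{\mathcal{S}})$. So I would first cite Theorem \ref{cartoprinc}(3) (or the discussion of the fundamental derivative following it) to justify that the formula $(T\Pi)^*(\nabla s)((\mathcal{F}_{\mathcal{S}}\om)^{-1}(t)) = D_t s - d(\mathcal{F}_{\mathcal{S}}\om)((\mathcal{F}_{\mathcal{S}}\om)^{-1}(t),(\mathcal{F}_{\mathcal{S}}\om)^{-1}(s))$ indeed defines a linear connection $\nabla$ on $\mathcal{A}_{\mathcal{S}}M$ (here one uses that the right-hand side is $C^\infty(M)$-linear and a derivation in $s$, $t$ respectively in the appropriate slots, and tensorial in $t$). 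Then I would invoke the Cartan-formula identity above to conclude that $s$ is $\nabla$-parallel if and only if the corresponding $\xi$ satisfies $\mathcal{L}_\xi(\mathcal{F}_{\mathcal{S}}\om)=0$, i.e. is an infinitesimal automorphism.

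The main obstacle — more bookkeeping than genuine difficulty — is verifying that the right-hand side genuinely descends to a well-defined linear connection on the associated bundle $\mathcal{A}_{\mathcal{S}}M$: one must check equivariance under $L\rtimes_{pr_2}\mathcal{S}$ and $C^\infty(M)$-linearity in the $t$-slot together with the Leibniz rule in $s$, exactly as in \cite[Section 1.5.8]{parabook}. Since the skeleton $(\fk\oplus\fs, L\rtimes_{pr_2}\mathcal{S},\rho_{\mathcal{S}})$ need not carry any compatible Lie bracket on $\fk\oplus\fs$, one cannot phrase $\nabla$ in terms of curvature or a twisted differential; but as remarked after Theorem \ref{cartoprinc}, the fundamental derivative and the resulting construction of linear connections on associated vector bundles make sense for arbitrary skeletons, so no extra structure is needed. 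I would therefore present the proof as: (i) recall the identification of infinitesimal automorphisms with invariant vector fields via Theorem \ref{Liegp}; (ii) recall that invariant vector fields correspond to sections of $\mathcal{A}_{\mathcal{S}}M$ through $\mathcal{F}_{\mathcal{S}}\om$; (iii) apply the argument of \cite[Lemma 1.5.12]{parabook}, which is valid here since it only uses the skeleton structure, to identify the vanishing of the Lie derivative with the vanishing of $\nabla s$.
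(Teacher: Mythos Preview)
Your proposal is correct and follows essentially the same approach as the paper: both start from the characterization of infinitesimal automorphisms in Theorem \ref{Liegp}, pass to sections of $\mathcal{A}_{\mathcal{S}}M$ via the Cartan connection, and then rewrite the vanishing of $\mathcal{L}_\xi(\mathcal{F}_{\mathcal{S}}\om)$ as the claimed parallelism condition, while noting that the absence of a Lie bracket on $\fk\oplus\fs$ prevents one from invoking curvature or an algebraic bracket. The only cosmetic difference is that the paper reaches your displayed identity in two steps---first writing $\mathcal{L}_\xi\om=0$ as $D_st=[s,t]$ for the geometric bracket and then expanding $[s,t]=D_st-D_ts-d(\mathcal{F}_{\mathcal{S}}\om)((\mathcal{F}_{\mathcal{S}}\om)^{-1}(s),(\mathcal{F}_{\mathcal{S}}\om)^{-1}(t))$ via \cite[Corollary 1.5.8]{parabook}---whereas you combine these into a single Cartan-formula identity and cite \cite[Lemma 1.5.12]{parabook} instead; the paper also makes the uniqueness of $\nabla$ explicit by checking the vertical case $(\mathcal{L}_{\zeta_X}\om)(\xi)=d\om(\zeta_X,\xi)$, which you fold into your well-definedness discussion.
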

\begin{proof}
The condition $\mathcal{L}_\xi(\mathcal{F}_{\mathcal{S}}(\om))=0$ from the Theorem \ref{Liegp} is equivalent to the condition that $\xi\cdot \mathcal{F}_{\mathcal{S}}(\om)(\nu)-\mathcal{F}_{\mathcal{S}}(\om)([\xi,\nu])=0$ holds for all $L\rtimes_{pr_2} \mathcal{S}$--invariant vector fields $\nu$ on $\mathcal{F}_{\mathcal{S}}(\ba)$. This can be equivalently written as $D_st = [s, t]$ for sections $t,s$ of $\mathcal{A}_{\mathcal{S}}M$, where $[,]$ is the (geometric) bracket on $\mathcal{A}_{\mathcal{S}}M$ induced by the bracket $[\xi,\nu]$ of the corresponding $L\rtimes_{pr_2} \mathcal{S}$--invariant vector fields $\xi,\nu$ on $\mathcal{F}_{\mathcal{S}}(\ba)$. As in proof of \cite[Corollary 1.5.8]{parabook}, we get the following equality $$[s,t]=D_{s}t-D_{t}s-d(\mathcal{F}_{\mathcal{S}}\om)((\mathcal{F}_{\mathcal{S}}\om)^{-1}(s),(\mathcal{F}_{\mathcal{S}}\om)^{-1}(t))$$
relating the geometric bracket with the fundamental derivative. We remark that the claim of \cite[Corollary 1.5.8]{parabook} is not available, because the curvature and the algebraic bracket on $\mathcal{A}_{\mathcal{S}}M$ are not well--defined in our situation. This allows us to rewrite the formula $D_st = [s, t]$ in the claimed way.

Since $(\mathcal{L}_{\zeta_X}\om)(\xi)=d\om(\zeta_X,\xi)$ holds for all fundamental vector fields $\zeta_X$ for $X\in \fl\oplus \fs$, the linear connection $\nabla$ computing infinitesimal automorphism is uniquely defined by the claimed formula.
\end{proof}

Thus the remaining task for the computation of the Lie algebra of the Lie group $\mathcal{S}\Aut(\ba,\om)$ is to check, which sections of $\mathcal{A}_{\mathcal{S}}M$ parallel w.r.t. to the linear connection from the Proposition \ref{autcomp0} correspond to complete vector fields.

Let us look on the case, when $(\alpha,i)$ is an extension of $(\fg,H,\Ad_G|_H)$ corresponding to Klein geometry $(G,H)$ to $(\fk,L,\rho)$ and we want to compute the Lie algebra of infinitesimal automorphisms of the Cartan geometry $\mathcal{F}_{\mathcal{S}}\circ  \mathcal{F}_{(\alpha,i)}(\ga\to M,\om)$ for the Cartan geometry $(\ga\to M,\om)$ of type $(\fg,H,\Ad_G|_H)$. Let us denote by $\kappa:\ga\to \wedge^2(\fg/\fh)^*\otimes \fg$ the curvature of the Cartan geometry $(\ga\to M,\om)$.

The identification $$\mathcal{A}_{\mathcal{S}}M=\ga\times_{\rho_{\mathcal{S}}\circ i(H)}(\alpha(\fg)+\fl\oplus \fs)$$ follows from the construction of the extension functor $\mathcal{F}_{\mathcal{S}}\circ \mathcal{F}_{(\alpha,i)}$ and we will write $$t=\alpha(t_\fg)+t_\fl+t_\fs$$ for the decomposition of the $\rho_{\mathcal{S}}\circ i(H)$--equivariant function $t: \ga\to \fk\oplus\fs$ to the functions $t_\fg: \ga\to \fg$, $t_\fl: \ga\to \fl$ and $t_\fs: \ga\to \fs$. The decomposition $t=\alpha(t_\fg)+t_\fl+t_\fs$ depends on a choice, because $\fk=\alpha(\fg)+\fl$ is not direct sum, however, the formulas that will appear in the rest of this section do not depend on the particular choice of the decomposition. Further, the function $t_\fs$ is constant along fibers, because $(\id-\rho_\mathcal{S}((i(h),(\id,\id)))t_\fs(u)\in \fk$ holds for all $h\in H$ and all $u\in \ga$. In particular, the functions $\alpha(t_\fg)+t_\fl$ and $t_\fs$ are not sections of $\mathcal{A}_{\mathcal{S}}M$, in general.

On the other hand, 
\begin{align*}&d(\mathcal{F}_{\mathcal{S}}\circ \mathcal{F}_{(\alpha,i)}\om)((\mathcal{F}_{\mathcal{S}}\circ \mathcal{F}_{(\alpha,i)}\om)^{-1}(\alpha(t_\fg)),(\mathcal{F}_{\mathcal{S}}\circ \mathcal{F}_{(\alpha,i)}\om)^{-1}(s))=\alpha(d\om(t_\fg,s_\fg))+\\
&d\rho_{\mathcal{S}}(s_\fl+s_\fs)(\alpha(t_\fg))=\alpha(\kappa(t_\fg,s_\fg)+\ad_\fg(s_\fg)(t_\fg))+d\rho_{\mathcal{S}}(s_\fl+s_\fs)(\alpha(t_\fg))
\end{align*}
holds for all $H$--equivariant functions $t_\fg: \ga\to \fg$ and all sections $s$ of $\mathcal{A}_{\mathcal{S}}M$. This provides a different connection on $\mathcal{A}_{\mathcal{S}}M$ that is related to the computation of infinitesimal automorphisms.

\begin{thm}\label{autcomp}
Let $(\ga\to M,\om)$ be a Cartan geometry of type $(\fg,H,\Ad_G|_H)$, let $(\alpha,i)$ be an extension of $(\fg,H,\Ad_G|_H)$ to $(\fk,L,\rho)$ and suppose $\mathcal{S}$ is a subgroup of $Ext(\fk,L,\rho)$ and a Lie subgroup of $Gl(\fk)\times \Aut(L)$. Then the map $A: \fg\to \frak{gl}(\fk\oplus\fs)$ defined by the formula 
$$A(t_\fg)(s):=-\alpha(\ad_\fg(s_\fg)(t_\fg))-d\rho_{\mathcal{S}}(s_\fl+s_\fs)(\alpha(t_\fg))$$
provides a morphism $(A,\rho_\mathcal{S}\circ i)$ of the skeleton $(\fg,H,\Ad_G|_H)$ to the skeleton $(\frak{gl}(\fk\oplus\fs),Gl(\fk\oplus\fs),\Ad_{Gl(\fk\oplus\fs)})$. Therefore the infinitesimal automorphisms of $\mathcal{F}_{\mathcal{S}}\circ  \mathcal{F}_{(\alpha,i)}(\ga\to M,\om)$ are sections $s$ of $\mathcal{A}_{\mathcal{S}}M$ such that $$(\nabla^{A} s)_{\fk}=-\alpha\circ ins_{s_\fg} \circ \kappa$$
and
$$(\nabla^{A} s)_{\fs}=0$$ hold for the unique linear connection $\nabla^{A}$ obtained for morphism $(A,\rho_\mathcal{S}\circ i)$ in Theorem \ref{cartoprinc}, where $ins_{s_\fk}$ is the insertion map to the first slot of the map $\kappa$. In particular, $s_\fs:\ga\to \fs$ is constant for any infinitesimal automorphism $s$.
\end{thm}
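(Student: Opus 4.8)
The plan is to refine Proposition \ref{autcomp0}: after restricting along the natural inclusion $\iota$ of $\ga$ into $\mathcal{F}_{\mathcal{S}}\circ\mathcal{F}_{(\alpha,i)}(\ga)$, the linear connection $\nabla$ appearing there will turn out to differ from the connection $\nabla^A$ produced by Theorem \ref{cartoprinc} for the morphism $(A,\rho_{\mathcal{S}}\circ i)$ only by the curvature term $\alpha\circ\kappa$, and the condition $\nabla s=0$ will split accordingly into the two displayed equations.

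First I would check that $(A,\rho_{\mathcal{S}}\circ i)$ is indeed a morphism of the skeleton $(\fg,H,\Ad_G|_H)$ to the skeleton $(\frak{gl}(\fk\oplus\fs),Gl(\fk\oplus\fs),\Ad_{Gl(\fk\oplus\fs)})$. Differentiating the defining relation $\alpha\circ\Ad_G(h)=\rho(i(h))\circ\alpha$ of the extension $(\alpha,i)$ gives $\alpha\circ\ad_\fg(Z)=d\rho(di(Z))\circ\alpha$ for all $Z\in\fh$. Using this together with the formula for $d\rho_{\mathcal{S}}$ recorded before the theorem, and the fact that the only ambiguity in a decomposition $s=\alpha(s_\fg)+s_\fl+s_\fs$ is a simultaneous replacement $s_\fg\mapsto s_\fg+Z$, $s_\fl\mapsto s_\fl-di(Z)$ with $Z\in\fh$ (the summand $\fs$ being canonical in $\fk\oplus\fs$), one verifies that $A(t_\fg)\in\frak{gl}(\fk\oplus\fs)$ is well defined and that $A(Z)=d(\rho_{\mathcal{S}}\circ i)(Z)$ for $Z\in\fh$, so $A$ extends $d(\rho_{\mathcal{S}}\circ i)$. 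The equivariance $A\circ\Ad_G(h)=\Ad_{Gl(\fk\oplus\fs)}(\rho_{\mathcal{S}}(i(h)))\circ A$ is then a direct computation from the same relation for $(\alpha,i)$, from $\rho_{\mathcal{S}}$ being a representation, and from the well-definedness just obtained. Since the target skeleton has $L=Gl(\fm)$ and $\fk=\fl=\frak{gl}(\fm)$ for $\fm=\fk\oplus\fs$, part (3) of Theorem \ref{cartoprinc} then yields the unique linear connection $\nabla^A$ on $\ga\times_{(\rho_{\mathcal{S}}\circ i)(H)}(\fk\oplus\fs)=\mathcal{A}_{\mathcal{S}}M$ with $(T\pi)^*(\nabla^As)(\xi)=D_{\om(\xi)}s+A(\om(\xi))(s)$.

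Next I would compare with Proposition \ref{autcomp0}. By the construction of the extension functors, $\iota^*(\mathcal{F}_{\mathcal{S}}\circ\mathcal{F}_{(\alpha,i)}\om)=\alpha\circ\om$, so along $\iota(\ga)$ the vector field $(\mathcal{F}_{\mathcal{S}}\circ\mathcal{F}_{(\alpha,i)}\om)^{-1}(\alpha(t_\fg))$ is $T\iota\circ\om^{-1}(t_\fg)$. Evaluating the defining formula of $\nabla$ from Proposition \ref{autcomp0} on the test sections $t=\alpha(t_\fg)$ (which are sections of $\mathcal{A}_{\mathcal{S}}M$ by equivariance of $\alpha$, and whose images under $\mathcal{A}_{\mathcal{S}}M\to TM$ already determine $\nabla s$) therefore makes the fundamental derivative $D_ts$ along $\iota(\ga)$ reduce to the fundamental derivative $D_{t_\fg}s$ on $\ga$, while the term $d(\mathcal{F}_{\mathcal{S}}\circ\mathcal{F}_{(\alpha,i)}\om)((\mathcal{F}_{\mathcal{S}}\circ\mathcal{F}_{(\alpha,i)}\om)^{-1}(\alpha(t_\fg)),(\mathcal{F}_{\mathcal{S}}\circ\mathcal{F}_{(\alpha,i)}\om)^{-1}(s))$ equals the quantity computed right before the theorem, which by the very definition of $A$ is $\alpha(\kappa(t_\fg,s_\fg))+\alpha(\ad_\fg(s_\fg)(t_\fg))+d\rho_{\mathcal{S}}(s_\fl+s_\fs)(\alpha(t_\fg))=\alpha(\kappa(t_\fg,s_\fg))-A(t_\fg)(s)$. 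Combining this with the formula for $\nabla^A$ from the previous step, the condition $\nabla s=0$ of Proposition \ref{autcomp0} — i.e.\ that $s$ be an infinitesimal automorphism of $\mathcal{F}_{\mathcal{S}}\circ\mathcal{F}_{(\alpha,i)}(\ga\to M,\om)$ — becomes $(T\pi)^*(\nabla^As)(\om^{-1}(t_\fg))=\alpha(\kappa(t_\fg,s_\fg))=-\alpha((ins_{s_\fg}\kappa)(t_\fg))$ for all $t_\fg$. Since $\alpha$ has values in the $\fk$-summand, splitting this identity into $\fk$- and $\fs$-components gives precisely $(\nabla^As)_\fk=-\alpha\circ ins_{s_\fg}\circ\kappa$ and $(\nabla^As)_\fs=0$.

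For the last assertion, observe that $A(t_\fg)$ maps $\fk\oplus\fs$ into the $\fk$-summand, so taking the $\fs$-component of $(T\pi)^*(\nabla^As)(\xi)=D_{\om(\xi)}s+A(\om(\xi))(s)$ shows that the $\fs$-component of $(T\pi)^*(\nabla^As)(\xi)$ is just the derivative of $s_\fs$ along $\om^{-1}(\om(\xi))$; hence $(\nabla^As)_\fs=0$ forces all directional derivatives of $s_\fs$ to vanish, and as $s_\fs$ is constant along the fibres of $\ga\to M$ (noted before the theorem) and $M$ is connected, $s_\fs$ is constant. I expect the real work to lie in the first step — verifying that $(A,\rho_{\mathcal{S}}\circ i)$ is genuinely a morphism of skeletons, above all the independence of $A(t_\fg)$ from the non-unique decomposition of $s$ and the $H$-equivariance — together with the bookkeeping, when restricting along $\iota(\ga)$, that matches the fundamental-derivative and curvature terms of Proposition \ref{autcomp0} with their counterparts on $\ga$.
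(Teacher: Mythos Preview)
Your proposal is correct and follows essentially the same route as the paper's proof: first verify that $(A,\rho_{\mathcal{S}}\circ i)$ is a morphism of skeletons (checking $A|_\fh=d\rho_{\mathcal{S}}\circ di$ and $H$-equivariance), then restrict the connection of Proposition \ref{autcomp0} along $\iota(\ga)$ and use the displayed formula for $d(\mathcal{F}_{\mathcal{S}}\circ\mathcal{F}_{(\alpha,i)}\om)$ preceding the theorem to identify $\nabla s=0$ with $\nabla^As=\alpha(\kappa(t_\fg,s_\fg))$, and finally read off the constancy of $s_\fs$ from the fact that $A$ takes values in $\fk$. Your additional remark on the independence of $A(t_\fg)$ from the non-unique splitting $s=\alpha(s_\fg)+s_\fl+s_\fs$ is a useful clarification the paper only alludes to before the theorem.
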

\begin{rem}
Let us point out that the Theorem can be used in the case when $\mathcal{S}=\{(\id,\id)\}$ to compute the infinitesimal automorphisms of the Cartan geometry $\mathcal{F}_{(\alpha,i)}(\ga\to M,\om)$. Similarly, the Theorem can be used in the case when $(\fg,H,\Ad_G|_H)=(\fk,L,\rho)$ to compute the Lie algebra of $\mathcal{S}$--automorphism group of $(\ga\to M,\om)$.
\end{rem}
\begin{proof}
We have to check that the map $A$ satisfies that $A|_\fh=d\rho_\mathcal{S}\circ di$ and $A(\Ad_G(h)(X))=\rho_\mathcal{S}(i(h))\circ A(X) \circ \rho_\mathcal{S}(i(h))^{-1}$ hold for all $h\in H$ and $X\in \fg$. But by definition it holds $$A(X)=d\rho_{\mathcal{S}}(di(X))$$ for $X\in \fh$ and we compute $$A(\Ad_G(h)(X))(Y)=\alpha(\ad_\fh(\Ad_G(h)(X))(Y))=\Ad_G(h)\circ A(X)\circ \Ad_G(h)^{-1}(Y)$$ for $X,Y\in \fg$ and $h\in H$ and \begin{align*}
A(\Ad_G(h)(X))(W)&=-d\rho_\mathcal{S}(W)(\alpha(\Ad_G(h)(X)))\\
&=-d\rho_\mathcal{S}(W)\circ \rho_\mathcal{S}(i(h),(\id,\id))\circ\alpha(X)\\&=-\rho_\mathcal{S}(i(h),(\id,\id))d\rho_\mathcal{S}(\rho_\mathcal{S}(i(h),(\id,\id))^{-1}(W))\circ \alpha(X)\\&=\rho_\mathcal{S}(i(h),(\id,\id))\circ A(X)\circ \rho_\mathcal{S}(i(h),(\id,\id))^{-1}(W)
\end{align*} for $X\in \fg$, $W\in \fs$ and $h\in H$.

Then we conclude from the Proposition \ref{autcomp0} that we want to find section $s$ of $\mathcal{A}_{\mathcal{S}}M$ such that \begin{align*}(T\pi)^*(\nabla^As)(\om^{-1}(t_\fg))&=D_{\alpha(t_\fg)}s+A(t_\fg)(s)\\
&=\alpha(\kappa(t_\fg,s_\fg)+\ad_\fg(s_\fg)(t_\fg))+d\rho_{\mathcal{S}}(s_\fl+s_\fs)(\alpha(t_\fg))+A(t_\fg)(s)\\
&=\alpha(\kappa(t_\fg,s_\fg))
\end{align*} holds for all $H$--equivariant functions $t_\fg: \ga\to \fg$ due to the above formula for $$d(\mathcal{F}_{\mathcal{S}}\circ \mathcal{F}_{(\alpha,i)}\om)((\mathcal{F}_{\mathcal{S}}\circ \mathcal{F}_{(\alpha,i)}\om)^{-1}(\alpha(t_\fg)),(\mathcal{F}_{\mathcal{S}}\circ \mathcal{F}_{(\alpha,i)}\om)^{-1}(s))$$.

Since $(T\pi)^*(\nabla^As_\fs)(\om^{-1}(t_\fg))_{\fs}=(D_{\alpha(t_\fg)}s_\fs)_\fs=0$ holds for all infinitesimal automorphisms $s$, the function $s_\fs$ is constant along flows of constant vector fields in $\ga$ and constant along fibers and thus the last claim follows.
\end{proof}

Since the functions $s_{\fs}:\ga\to \fs$ are constant for the complete infinitesimal automorphisms $s$ of $\mathcal{F}_{\mathcal{S}}\circ \mathcal{F}_{(\alpha,i)}(\ga\to M,\om)$, the corresponding one parameter subgroups $\Phi_w$ satisfy $(\Phi_w)^*( \mathcal{F}_{(\alpha,i)}(\om))=\exp(w\cdot d\rho_{\mathcal{S}}(s_{\fs}))\circ  \mathcal{F}_{(\alpha,i)}(\om)$ for $w\in \mathbb{R}$.

In particular, the Theorem \ref{autcomp} can be used on the homogeneous Cartan geometry $\mathcal{F}_{(\alpha,i)}(G\to G/H,\om_G)$ of type $(\fk,L,\rho)$ and we obtain the following result.

\begin{thm}\label{homcomp}
Let $(\alpha,i)$ be an extension of $(\fg,H,\Ad_G|_H)$ to $(\fk,L,\rho)$ and suppose $\mathcal{S}$ is a subgroup of $Ext(\fk,L,\rho)$ and a Lie subgroup of $Gl(\fk)\times \Aut(L)$. Then the connection $\nabla^A$ defined in Theorem \ref{autcomp} is $G$--invariant connection on $\mathcal{A}_\mathcal{S}(G/H)$ and the infinitesimal automorphisms of $\mathcal{F}_{\mathcal{S}}\circ\mathcal{F}_{(\alpha,i)}(G\to G/H,\om_G)$ are sections of $\mathcal{A}_\mathcal{S}(G/H)$ parallel w.r.t. to $\nabla^A$.

In particular, the space of infinitesimal automorphisms on $\mathcal{F}_{\mathcal{S}}\circ \mathcal{F}_{(\alpha,i)}(K\to K/H,\om_K)$ is isomorphic to set of all $X\in \fk\oplus\frak{s}$ such that $$[A(Z_{k+2}),\dots[A(Z_3), A(Z_1)\circ A(Z_2)- A(Z_2)\circ  A(Z_1)-A(\ad_\fg(Z_1)Z_2)]\dots](X)=0$$ holds for all $k$ and $Z_1,\dots, Z_k\in \fg$, where $[,]$ is the Lie bracket in $\frak{gl}(\fk\oplus \fs)$.
\end{thm}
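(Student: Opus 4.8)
The plan is to obtain the first assertion directly from Theorem \ref{autcomp} by noting that the flat model has vanishing curvature, and then to identify the $\nabla^A$--parallel sections with the vectors of $\fk\oplus\fs$ fixed by the holonomy of the $G$--invariant connection $\nabla^A$; the displayed iterated brackets will turn out to span the holonomy Lie algebra. First I would observe that $(G\to G/H,\om_G)$ is the flat model: the Maurer--Cartan equation gives $d\om_G(\om_G^{-1}(X),\om_G^{-1}(Y))=-[X,Y]_\fg$, so the curvature function $\kappa:G\to\wedge^2(\fg/\fh)^*\otimes\fg$ of $(G\to G/H,\om_G)$ (read off from the formula in Theorem \ref{cartoprinc} with $\alpha=\id$) is identically $0$. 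Hence Theorem \ref{autcomp}, applied to $(\ga\to M,\om)=(G\to G/H,\om_G)$, says that the infinitesimal automorphisms of $\mathcal{F}_{\mathcal{S}}\circ\mathcal{F}_{(\alpha,i)}(G\to G/H,\om_G)$ are exactly the sections $s$ of $\mathcal{A}_\mathcal{S}(G/H)$ with $(\nabla^A s)_\fk=0$ and $(\nabla^A s)_\fs=0$, i.e. the $\nabla^A$--parallel sections. That $\nabla^A$ is $G$--invariant I would deduce from functoriality: every left translation $\lambda_g$ obeys $\lambda_g^*\om_G=\om_G$, hence is an automorphism of $(G\to G/H,\om_G)$, and by functoriality of $\mathcal{F}_{\mathcal{S}}\circ\mathcal{F}_{(\alpha,i)}$ and of the construction of $\nabla^A$ in Theorem \ref{cartoprinc} it induces an automorphism of $\mathcal{A}_\mathcal{S}(G/H)=G\times_{\rho_\mathcal{S}\circ i(H)}(\fk\oplus\fs)$ covering the $G$--action on $G/H$ and preserving $\nabla^A$.

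Next I would make $\nabla^A$ explicit along the Maurer--Cartan form. A section of $\mathcal{A}_\mathcal{S}(G/H)$ is an $(\rho_\mathcal{S}\circ i)|_H$--equivariant map $s:G\to\fk\oplus\fs$, and since $\om_G^{-1}(Z)$ is the left--invariant vector field $Z^L$, the fundamental derivative is $D_Zs(g)=\tfrac{d}{dt}\big|_0 s(g\exp(tZ))$; so by Theorem \ref{cartoprinc} the equation $\nabla^A s=0$ becomes $\tfrac{d}{dt}\big|_0 s(g\exp(tZ))=-A(Z)(s(g))$ for all $Z\in\fg$ and $g\in G$ (for $Z\in\fh$ this is the infinitesimal equivariance, since $A|_\fh=d\rho_\mathcal{S}\circ di$). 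As $G/H$ is connected, a parallel $s$ is determined by $X:=s(e)$, and the question becomes for which $X\in\fk\oplus\fs$ this linear ODE admits a global single--valued solution with $s(e)=X$. I would also record, from the curvature formula at the end of Theorem \ref{cartoprinc} combined with $\kappa\equiv0$, that the curvature of $\nabla^A$ is the algebraic expression $\kappa_A(Z_1+\fh,Z_2+\fh)=A(Z_1)\circ A(Z_2)-A(Z_2)\circ A(Z_1)-A(\ad_\fg(Z_1)Z_2)$ (which is forced to vanish if $Z_1$ or $Z_2\in\fh$ by the equivariance $A(\Ad_G(h)Z)=\rho_\mathcal{S}(i(h))\circ A(Z)\circ\rho_\mathcal{S}(i(h))^{-1}$ of Theorem \ref{autcomp}).

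The last step is the heart of the matter, and the one I expect to be the main obstacle, precisely because the tractor--calculus shortcuts (the algebraic bracket on $\mathcal{A}_\mathcal{S}M$ and the curvature of a tractor connection) are not available for skeletons. Conceptually, a single--valued parallel section with $s(e)=X$ exists if and only if $X$ is fixed by the holonomy of $\nabla^A$ at $eH$; since $\nabla^A$ is $G$--invariant on a homogeneous space it is real--analytic, so this holonomy group is connected and equals the exponential of the infinitesimal holonomy algebra at $eH$, which by the Ambrose--Singer theorem is the Lie subalgebra of $\frak{gl}(\fk\oplus\fs)$ generated by the curvature operators of $\nabla^A$ and all their iterated covariant derivatives. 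For the invariant connection $\nabla^A$, covariant differentiation in a direction $Z_j\in\fg$ acts on an operator $B\in\frak{gl}(\fk\oplus\fs)$ by the commutator $B\mapsto[A(Z_j),B]$, so iterating on the curvature operator $\kappa_A(Z_1+\fh,Z_2+\fh)$ computed above produces exactly the operators $[A(Z_{k+2}),\dots[A(Z_3),A(Z_1)\circ A(Z_2)-A(Z_2)\circ A(Z_1)-A(\ad_\fg(Z_1)Z_2)]\dots]$ of the statement; hence $X$ is the value at $e$ of an infinitesimal automorphism iff all these operators annihilate $X$. The remaining bookkeeping — differentiating the ODE $\tfrac{d}{dt}s(g\exp tZ)=-A(Z)s(g)$ repeatedly along left--invariant fields, commuting derivatives via the bracket of $G$, and invoking the Jacobi identity and $A(\ad_\fg(Z)W)=[A(Z),A(W)]-\kappa_A(Z+\fh,W+\fh)$ to recognise the successive integrability obstructions as those brackets — is routine, so I would not carry it out in detail.
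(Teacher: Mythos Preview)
Your argument is essentially the paper's own: the paper invokes functoriality of $\nabla^A$ from Theorem \ref{cartoprinc} for $G$--invariance, uses flatness of $(G\to G/H,\om_G)$ together with Theorem \ref{autcomp} to reduce infinitesimal automorphisms to $\nabla^A$--parallel sections, and then cites \cite[Theorem II.11.8]{KoNo} for the fact that the displayed iterated brackets span the holonomy algebra of the invariant connection and that parallel (local) sections are exactly the vectors annihilated by it --- precisely the holonomy computation you spell out by hand. One small caution: the paper phrases the conclusion for \emph{local} parallel sections and explicitly defers the global question to a separate check (whether the $\fg$--representation integrates to $G$), so your appeal to connectedness of the holonomy group via analyticity is more than what is being asserted and is not needed here.
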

\begin{proof}
Since the map $\om_G\mapsto \nabla^A$ in the Theorem \ref{cartoprinc} is a functor, the first claim is clear. Since the Cartan geometry $(K\to K/H,\om_K)$ is flat, the second claim follows from the Theorem \ref{autcomp}.

It is well known (c.f. \cite[Theorem II.11.8]{KoNo}) that the claimed elements generate the holonomy algebra of the $G$--invariant connection $\nabla^A$ given by the map $A$ and that the parallel local sections correspond to elements of $X\in \fk\oplus\frak{s}$, which are annihilated by the holonomy algebra.
\end{proof}

To obtain a global result, one has to check which solutions $X\in \fk\oplus\frak{s}$ satisfying the conditions of the above Theorem sit not only in a representation of $\fg$, but also in a representation of $G$.

In the special case, when $(\fg,H,\Ad_G|_H)=(\fk,L,\rho)$, we know that all $X\in\fg=\fk$ correspond to the infinitesimal automorphisms. Therefore it is enough to ask, which elements of $X\in \fs$ correspond to infinitesimal automorphisms of $\mathcal{F}_{\mathcal{S}}(K\to K/H,\om_K)$. Since $A(t_\fg)(X)=-d\rho_{\mathcal{S}}(X)(t_\fg)\subset \fg$ holds in this case, we obtain the following corollary of the Theorem \ref{homcomp}, because in this case the condition of Theorem \ref{homcomp} is the requirement for $d\rho_{\mathcal{S}}(X)$ to be an automorphism of the Lie algebra $\fg$.

\begin{cor}\label{modhom}
The $\mathcal{S}$--automorphism group of the Cartan geometry $(G\to G/H,\om_G)$ of type $(\fg,H,\Ad_G|_H)$ is the group $$G\rtimes (\mathcal{S}\cap\{(d\sigma,\sigma|_H): \sigma\in \Aut(G),\ \sigma(H)\subset H\}).$$
\end{cor}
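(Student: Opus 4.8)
The plan is to run everything through the group homomorphism $q\colon \mathcal{S}\Aut(\ba,\om)\to\mathcal{S}$ sending a $(\beta,j)$--automorphism to $(\beta,j)$, whose kernel is $\Aut(\ba,\om)$ (the remark after Definition \ref{symdef1}), applied to the flat model $(\ba\to M,\om)=(G\to G/H,\om_G)$. It then suffices to identify $\Aut(G\to G/H,\om_G)$, to compute the image of $q$ inside $\mathcal{S}$, and to split $q$; Theorem \ref{homcomp} is used to pin down the infinitesimal picture and thereby the identity component, while the global statement follows from a direct analysis of bundle morphisms of $(G\to G/H,\om_G)$.

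For the infinitesimal side, specialize Theorem \ref{autcomp}/\ref{homcomp} to $(\fg,H,\Ad_G|_H)=(\fk,L,\rho)$, $(\alpha,i)=(\id,\id)$. Then $\mathcal{A}_\mathcal{S}(G/H)=G\times_{\rho_\mathcal{S}\circ\id(H)}(\fg\oplus\fs)$ and, since $\alpha=\id$, the map $A\colon\fg\to\frak{gl}(\fg\oplus\fs)$ of Theorem \ref{autcomp} reads $A(Z)(Y)=[Z,Y]$ for $Y\in\fg$ and $A(Z)(X_\fs)=-dpr_1(X_\fs)(Z)$ for $X_\fs\in\fs$; in particular every $A(Z)$ has image in $\fg$. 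Computing $R_A(Z_1,Z_2):=A(Z_1)A(Z_2)-A(Z_2)A(Z_1)-A(\ad_\fg(Z_1)Z_2)$, the Jacobi identity gives $R_A(Z_1,Z_2)|_\fg=0$, whereas on $X_\fs\in\fs$ one gets $R_A(Z_1,Z_2)(X_\fs)=\varphi([Z_1,Z_2])-[Z_1,\varphi(Z_2)]-[\varphi(Z_1),Z_2]$ with $\varphi:=dpr_1(X_\fs)$, i.e. the failure of $\varphi$ to be a derivation of $\fg$. Since all $A(Z)$ take values in $\fg$ and $R_A$ kills $\fg$, the iterated brackets $[A(Z_{k+2}),[\dots,[A(Z_3),R_A(Z_1,Z_2)]\dots]]$ of Theorem \ref{homcomp} also kill $\fg$ and on $\fs$ reduce to $[Z_{k+2},[\dots,[Z_3,R_A(Z_1,Z_2)(X_\fs)]\dots]]$, so they annihilate $Y+X_\fs$ precisely when $R_A(Z_1,Z_2)(X_\fs)=0$ for all $Z_1,Z_2$. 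Hence the infinitesimal automorphisms of $\mathcal{F}_\mathcal{S}(G\to G/H,\om_G)$ are $\fg\oplus\{X_\fs\in\fs\colon dpr_1(X_\fs)\text{ is a derivation of }\fg\}$, with the $\fg$--summand realized by the left--translation vector fields.

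For the global statement: a bundle morphism $\Phi\colon G\to G$ with $\Phi^*\om_G=\om_G$ is a left translation (rigidity of the left Maurer--Cartan form, equivalently Theorem \ref{Liegp} together with the free transitive action of $G$ on itself), so $\Aut(G\to G/H,\om_G)=G$. For $(\beta,j)\in\mathcal{S}$, a $(\beta,j)$--automorphism $\Phi$ satisfies $\Phi^*\om_G=\beta\circ\om_G$; by Theorem \ref{cartoprinc} the curvature of $\mathcal{F}_{(\beta,j)}(G\to G/H,\om_G)$ is $R_\beta$, so a morphism to the flat $(G\to G/H,\om_G)$ forces $R_\beta=0$, i.e. $\beta\in\Aut(\fg)$, and the monodromy obstruction of the equation $\Phi^*\om_G=\beta\circ\om_G$ forces $\beta$ to integrate to some $\sigma\in\Aut(G)$; then $\Phi=L_a\circ\sigma$ for $a\in G$, and $\Phi(gh)=\Phi(g)j(h)$ forces $\sigma|_H=j$, in particular $\sigma(H)\subset H$. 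Conversely $\sigma$ itself is a $(d\sigma,\sigma|_H)$--automorphism whenever $(d\sigma,\sigma|_H)\in\mathcal{S}$ with $\sigma\in\Aut(G)$, $\sigma(H)\subset H$. Thus $\mathrm{Im}(q)=\mathcal{S}_0:=\mathcal{S}\cap\{(d\sigma,\sigma|_H)\colon\sigma\in\Aut(G),\ \sigma(H)\subset H\}$, a subgroup of $\mathcal{S}$; the assignment $(d\sigma,\sigma|_H)\mapsto\sigma$ splits $q$, conjugation by $\sigma$ sends $L_a$ to $L_{\sigma(a)}$, and so $\mathcal{S}\Aut(G\to G/H,\om_G)=G\rtimes\mathcal{S}_0$ with product $(a,\sigma)(b,\tau)=(a\sigma(b),\sigma\tau)$, which is the claimed formula.

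The step I expect to be the main obstacle is reconciling the second and third paragraphs: Theorem \ref{homcomp} yields infinitesimal automorphisms from every $X_\fs$ with $dpr_1(X_\fs)$ a derivation of $\fg$, yet such an $X_\fs$ contributes to the Lie group $\mathcal{S}\Aut$ only when that derivation exponentiates, inside $\mathcal{S}$, to genuine automorphisms of $G$ preserving $H$ --- equivalently, only when the corresponding infinitesimal automorphism is complete. This is exactly the passage from a ``representation of $\fg$'' to a ``representation of $G$'' mentioned after Theorem \ref{homcomp}; the direct bundle--morphism argument of the third paragraph bypasses having to integrate derivations by hand and is what actually produces the group--level statement.
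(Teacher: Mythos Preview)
Your proof is correct. The infinitesimal computation in your second paragraph is exactly the paper's argument: the paper deduces the corollary from Theorem \ref{homcomp} by the single observation that, in the special case $(\alpha,i)=(\id,\id)$, one has $A(t_\fg)(X)=-d\rho_\mathcal{S}(X)(t_\fg)\in\fg$ for $X\in\fs$, whence the holonomy condition of Theorem \ref{homcomp} collapses to the requirement that $d\rho_\mathcal{S}(X)|_\fg$ be a derivation of $\fg$. Your computation of $R_A$ and of the iterated brackets makes this explicit and is the same argument spelled out.

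What you add beyond the paper is the direct global analysis in your third paragraph. The paper states the group--level result but offers only the infinitesimal justification together with the remark (after Theorem \ref{homcomp}) that one must pass ``from a representation of $\fg$ to a representation of $G$''. You carry this passage out concretely: pulling back the Maurer--Cartan equation through $\Phi^*\om_G=\beta\circ\om_G$ forces $\beta\in\Aut(\fg)$, normalizing $\Phi$ at $e$ produces the integrated $\sigma\in\Aut(G)$, and equivariance over $j$ yields $\sigma|_H=j$. This is a genuine improvement in rigor over the paper's presentation; the paper's route is shorter but leaves the integration step to the reader, while yours closes it. One small caveat: your splitting $(d\sigma,\sigma|_H)\mapsto\sigma$ is well-defined only when $\sigma$ is uniquely determined by $(d\sigma,\sigma|_H)$, which holds if $G$ is connected (or more generally if $G$ is generated by $G_0$ and $H$); the paper's formulation carries the same implicit assumption.
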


We can use similar argumentation to above for the general homogeneous Cartan geometries given by the extension $(\alpha,i)$ of $(\fg,H,\Ad_G|_H)$ to $(\fk,L,\rho)$ only in the case, when $X\in \fl\oplus \fs$ satisfies $d\rho_{\mathcal{S}}(X)(\alpha(\fg))\subset \alpha(\fg)$. However, in such case we obtain the following global result as Corollary of Theorem \ref{homcomp}.

\begin{cor}\label{autcor}
If $l\in L$ and $(\beta,j)\in Ext(\fk,L,\rho)$ are such that $\rho(l)\circ\beta$ preserves the image of $\alpha$. Then if the Lie algebra automorphism $\alpha^{-1}\circ \rho(l)\circ\beta\circ \alpha$ of $\fg$ induces a Lie group automorphism of $G$, then there is $(\beta,j)$--automorphism of $\mathcal{F}_{(\alpha,i)}(G\to G/H,\om_G)$ mapping the point $\iota(e)\in \mathcal{F}_{(\alpha,i)}(G)$ on $r^{l}\circ \iota(e)$.
\end{cor}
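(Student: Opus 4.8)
The plan is to produce the $(\beta,j)$--automorphism explicitly on $\mathcal{F}_{(\alpha,i)}(G\to G/H,\om_G)=(G\times_{i(H)}L\to G/H,\om^\alpha)$ by transporting the Lie group automorphism of $G$ that the hypothesis provides. Write $\psi:=\alpha^{-1}\circ\rho(l)\circ\beta\circ\alpha$, so by assumption $\psi\in\Aut(\fg)$ and there is $\Psi\in\Aut(G)$ with $d\Psi=\psi$; the defining relation is $\alpha\circ\psi=\rho(l)\circ\beta\circ\alpha$.

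First I would collect the algebraic facts needed. Since $\beta$ extends $dj$ and $\rho|_\fl=\Ad_L$, the map $\rho(l)\circ\beta$ sends $\fl$ to $\fl$, acting there as $\Ad_L(l)\circ dj$; moreover $\ker\alpha=\ker(di)\subset\fh$ and $\alpha^{-1}(\fl)=\fh$ because $\alpha$ induces an isomorphism $\fg/\fh\cong\fk/\fl$. Hence $\psi(\fh)=\fh$, and restricting $\alpha\circ\psi=\rho(l)\circ\beta\circ\alpha$ to $\fh$ (where $\alpha=di$) gives $di\circ\psi|_\fh=\Ad_L(l)\circ dj\circ di$. Consequently $\Psi$ restricts to an automorphism of $H$ (for disconnected $H$ this is where we use that $\psi$ \emph{induces} an automorphism of $G$; on the identity component it is automatic), so $(\psi,\Psi|_H)\in Ext(\fg,H,\Ad_G|_H)$; and since $\om_G$ is left invariant and $\Psi$ is a group automorphism, $\Psi^*\om_G=\psi\circ\om_G$, i.e. $\Psi$ is a $(\psi,\Psi|_H)$--automorphism of $(G\to G/H,\om_G)$. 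Finally $i\circ\Psi|_H$ and $\conj(l)\circ j\circ i$ are homomorphisms $H\to L$ with equal derivative $\Ad_L(l)\circ dj\circ di$, hence equal on $H^\circ$ and, when $H$ is connected, on all of $H$.

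Now set $\Phi([g,a]):=[\Psi(g),\,l\cdot j(a)]$ for $g\in G$, $a\in L$. Well-definedness on $G\times_{i(H)}L$ follows from $\Psi(H)\subset H$ together with $i\circ\Psi|_H=\conj(l)\circ j\circ i$: for $h\in H$, using $[\Psi(g)\Psi(h),b]=[\Psi(g),i(\Psi(h))\,b]$ one checks that $(g,a)$ and $(gh,i(h)^{-1}a)$ have the same image. By construction $\Phi$ is an invertible $L$--bundle morphism over $j$, and $\Phi(\iota(e))=[e,l]=r^l\circ\iota(e)$. It remains to verify $\Phi^*(\om^\alpha)=\beta\circ\om^\alpha$. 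Pulling back along the inclusion $\iota:G\to G\times_{i(H)}L$ and using $\Phi\circ\iota=r^l\circ\iota\circ\Psi$, $\iota^*\om^\alpha=\alpha\circ\om_G$, $(r^l)^*\om^\alpha=\rho(l)^{-1}\circ\om^\alpha$ and $\Psi^*\om_G=\psi\circ\om_G$, I get
$$\iota^*(\Phi^*\om^\alpha)=\rho(l)^{-1}\circ\alpha\circ\psi\circ\om_G=\rho(l)^{-1}\circ\rho(l)\circ\beta\circ\alpha\circ\om_G=\beta\circ\alpha\circ\om_G=\iota^*(\beta\circ\om^\alpha).$$
Both $\Phi^*\om^\alpha$ and $\beta\circ\om^\alpha$ satisfy $(r^a)^*(-)=\rho(j(a))^{-1}\circ(-)$ (for the second this is the extension identity $\beta\circ\rho(a)=\rho(j(a))\circ\beta$), and a one-form on $G\times_{i(H)}L$ with this equivariance is determined by its restriction to $\iota(G)$ — the same uniqueness mechanism used in Theorem \ref{cartoprinc}. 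Hence $\Phi^*\om^\alpha=\beta\circ\om^\alpha$ and $\Phi$ is the desired $(\beta,j)$--automorphism.

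The analytic content (integrability of $\psi$ to $\Psi$) is supplied by hypothesis and the verification of $\Phi^*\om^\alpha=\beta\circ\om^\alpha$ is short; the one point requiring care is the bundle-theoretic bookkeeping making $\Phi$ well defined, i.e. that $\Psi$ preserves $H$ and that $i\circ\Psi|_H=\conj(l)\circ j\circ i$ holds on all of $H$. Equivalently one can avoid writing $\Phi$ by hand: $(\rho(l),\conj(l))\in Ext(\fk,L,\rho)$ since $r^{l^{-1}}$ is a $(\rho(l),\conj(l))$--automorphism of every geometry of type $(\fk,L,\rho)$, so $(\rho(l)\circ\beta,\conj(l)\circ j)=(\rho(l),\conj(l))\circ(\beta,j)\in Ext(\fk,L,\rho)$ is compatible with $(\alpha,i)$ via $(\psi,\Psi|_H)$ in the sense of Proposition \ref{extinsc}; pushing the $(\psi,\Psi|_H)$--automorphism $\Psi$ through $\mathcal{F}_{(\alpha,i)}$ then yields a $(\rho(l)\circ\beta,\conj(l)\circ j)$--automorphism fixing $\iota(e)$, and composing it with $r^l$ (a $(\rho(l)^{-1},\conj(l)^{-1})$--automorphism) produces the sought $(\beta,j)$--automorphism taking $\iota(e)$ to $r^l\circ\iota(e)$.
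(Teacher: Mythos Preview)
Your argument is correct and self-contained; it differs in spirit from the paper's treatment. The paper does not give an explicit proof of this corollary: it is recorded as following from Theorem~\ref{homcomp} by the same reasoning that led to Corollary~\ref{modhom}, i.e., one reads off from the holonomy condition that an element $X\in\fl\oplus\fs$ with $d\rho_{\mathcal S}(X)(\alpha(\fg))\subset\alpha(\fg)$ gives an infinitesimal automorphism precisely when the induced map on $\fg$ is a derivation, and then the global statement is obtained by requiring integrability to $\Aut(G)$. You instead bypass the infinitesimal computation entirely and write down the $(\beta,j)$--automorphism $\Phi([g,a])=[\Psi(g),l\cdot j(a)]$ directly, verifying $\Phi^*\om^\alpha=\beta\circ\om^\alpha$ by the uniqueness mechanism of Theorem~\ref{cartoprinc}. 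Your alternative packaging via Proposition~\ref{extinsc} is also legitimate and amounts to the same thing.

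What your approach buys is an explicit formula and independence from the machinery of Section~\ref{sec5}; what the paper's route buys is that the corollary drops out of the general infinitesimal picture without a separate construction. Both approaches hit the same genuine issue, which you flag honestly: the well-definedness of $\Phi$ needs $\Psi(H)\subset H$ and $i\circ\Psi|_H=\conj(l)\circ j\circ i$ on all of $H$, which your Lie-algebra computation only settles on $H^\circ$. The paper's statement is equally silent on this point (compare Corollary~\ref{modhom}, where the condition $\sigma(H)\subset H$ is made explicit), and in the application in Section~\ref{sec6} the group $H=SO(2)\times\{1\}$ is connected, so nothing is lost. Your identification of this as ``the one point requiring care'' is exactly right.
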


\section{Examples and applications}\label{sec6}

In this section, we will use the same notation $\Ad$ for any adjoint representation that is induced by the restriction of the adjoint representation of $\mathbb{R}^n\rtimes Gl(n)$ to a subgroup of $Gl(n)$.

\subsection{Homotheties of Riemannian geometries as morphisms induced by extension functors}\label{6,1}

We mentioned in the Section \ref{sec1} that all maximally symmetric models of the Riemannian geometries share the same skeleton $(\mathbb{R}^n\oplus \frak{so}(n),O(n),\Ad)$. We need to set a condition that will represent the torsion--freeness in our setting in order the view the category of Riemannian geometries as subcategory of the category $\mathcal{C}^{(\mathbb{R}^n\oplus \frak{so}(n),O(n),\Ad)}$. We can use the fact that $\mathbb{R}^n$ is the unique $O(n)$--invariant complement of $\frak{so}(n)$ in $\mathbb{R}^n\oplus \frak{so}(n)$.

\begin{def*}
We say that $T:=d\om|_{\wedge^2 (\mathbb{R}^n)^*\otimes \mathbb{R}^n}$ is a \emph{torsion of the Cartan geometry $(\ga\to M,\om)$ of type $(\mathbb{R}^n\oplus \frak{so}(n),O(n),\Ad)$}.
\end{def*}

We can define torsion in the analogous way for all Cartan geometries of type $(\fk,L,\rho)$ such that the representation $\rho$ is completely reducible and there is unique $L$--invariant complement of $\fl$ in $\fk$.

\begin{lem}\label{riecat}
There is equivalence of categories between the category of Riemannian geometries and the subcategory of $\mathcal{C}^{(\mathbb{R}^n\oplus \frak{so}(n),O(n),\Ad)}$ consisting of Cartan geometries satisfying $T=0$.
\end{lem}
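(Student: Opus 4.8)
The plan is to construct an explicit equivalence of categories by realizing each Riemannian manifold as a torsion-free Cartan geometry of type $(\mathbb{R}^n\oplus \frak{so}(n),O(n),\Ad)$ and conversely. The starting observation is that the skeleton $(\mathbb{R}^n\oplus \frak{so}(n),O(n),\Ad)$ is precisely the skeleton underlying the Klein geometry $(\mathbb{R}^n\rtimes O(n),O(n))$ of Euclidean space, so the usual first Bianchi-type normalization from the theory of Riemannian Cartan geometries can be invoked via Theorem~\ref{cartoprinc}.

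\emph{From Riemannian geometries to Cartan geometries.} Given a Riemannian manifold $(M,g)$, I would take $\ga$ to be the orthonormal frame bundle $P_{O(n)}M\to M$, which is a principal $O(n)$-bundle. The Levi--Civita connection furnishes a principal connection form $\gamma_{\frak{so}}$ with values in $\frak{so}(n)$, and the soldering (tautological) form $\theta$ has values in $\mathbb{R}^n$; one sets $\om:=\theta\oplus\gamma_{\frak{so}}$, a one-form with values in $\mathbb{R}^n\oplus\frak{so}(n)$. The equivariance $(r^l)^*\om=\Ad(l^{-1})\circ\om$ and the reproduction of fundamental vector fields of $\frak{so}(n)$ follow from the standard properties of $\theta$ and of a principal connection, and $\om(u)$ is a linear isomorphism at each $u$ because $\theta$ trivializes the horizontal spaces and $\gamma_{\frak{so}}$ the vertical ones. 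The torsion $T=d\om|_{\wedge^2(\mathbb{R}^n)^*\otimes\mathbb{R}^n}$ is, after translating through $\om^{-1}$, exactly the classical torsion of the Levi--Civita connection computed in an orthonormal frame, which vanishes; hence $T=0$. A morphism of Riemannian geometries, i.e.\ an isometry $f:(M,g)\to(M',g')$, lifts canonically to an $O(n)$-bundle map $P_{O(n)}f:P_{O(n)}M\to P_{O(n)}M'$ pulling back $\theta'$ to $\theta$ and (by naturality of the Levi--Civita connection under isometries) $\gamma'_{\frak{so}}$ to $\gamma_{\frak{so}}$, hence pulling back $\om'$ to $\om$; this gives a functor in one direction, and functoriality is immediate from functoriality of the frame bundle construction and uniqueness of the Levi--Civita connection.

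\emph{From torsion-free Cartan geometries to Riemannian geometries.} Conversely, given $(\ga\to M,\om)$ of type $(\mathbb{R}^n\oplus \frak{so}(n),O(n),\Ad)$ with $T=0$, decompose $\om=\om_{\mathbb{R}^n}\oplus\om_{\frak{so}}$ using the unique $O(n)$-invariant splitting $\mathbb{R}^n\oplus\frak{so}(n)$. Equivariance of $\om$ shows $\om_{\mathbb{R}^n}$ is a strictly horizontal, $O(n)$-equivariant $\mathbb{R}^n$-valued form, so it defines a reduction of the frame bundle of $M$ to $O(n)$, equivalently a Riemannian metric $g$ on $M$ (via $g:=$ pullback of the standard inner product on $\mathbb{R}^n$). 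Likewise $\om_{\frak{so}}$ is a principal connection on $\ga$, inducing a metric linear connection $\nabla$ on $M$; the hypothesis $T=0$ says exactly that $\nabla$ is torsion-free, so $\nabla$ is the Levi--Civita connection of $g$. Applying the Riemannian-to-Cartan construction above to $(M,g)$ recovers a Cartan geometry isomorphic to the original one, using Theorem~\ref{rigid} (the skeleton is effective since $O(n)$ acts effectively on $\mathbb{R}^n$) to pin down the isomorphism uniquely. A morphism $\Phi:(\ga\to M,\om)\to(\ga'\to M',\om')$ covers a diffeomorphism $f:M\to M'$; pulling back $\om_{\mathbb{R}^n}'=\om_{\mathbb{R}^n}$ forces $f$ to be an isometry, giving the functor in the other direction.

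\emph{Equivalence.} The two functors are quasi-inverse: one composite is naturally isomorphic to the identity on Riemannian geometries because the frame bundle of the metric extracted from $(\theta\oplus\gamma_{\frak{so}})$ is canonically $P_{O(n)}M$; the other composite is naturally isomorphic to the identity on the subcategory $\{T=0\}$ by the uniqueness argument via Theorem~\ref{rigid} just mentioned. The main obstacle is the careful verification on the Cartan-geometry side that $\om_{\mathbb{R}^n}$ genuinely encodes a reduction to $O(n)$ and that the induced $\nabla$ is metric, together with checking that morphisms on that side are forced to be isometries rather than merely affine maps — this is where the completely reducible representation and the uniqueness of the $O(n)$-invariant complement of $\frak{so}(n)$ in $\mathbb{R}^n\oplus\frak{so}(n)$ are essential, since they guarantee the splitting of $\om$ used throughout is canonical and preserved by all morphisms.
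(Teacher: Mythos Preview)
Your argument is correct and follows the same classical template as the paper: identify the Cartan connection with the pair (soldering form, Levi--Civita connection form) on the orthonormal frame bundle, and read off the torsion condition $T=0$ as torsion-freeness of the induced metric connection.

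The one structural difference worth noting is that the paper deliberately detours through the extension functor $\mathcal{F}_{aff}$ induced by the inclusion $\mathbb{R}^n\rtimes O(n)\subset \mathbb{R}^n\rtimes Gl(n)$: it extends $(\ga,\om)$ to a Cartan geometry of type $(\mathbb{R}^n\oplus\frak{gl}(n),Gl(n),\Ad)$, identifies $\mathcal{F}_{aff}(\ga)$ with the full first-order frame bundle $P^1M$, and then views the Riemannian metric as the $O(n)$-reduction $\ga\subset P^1M$. This buys nothing for the lemma itself---your intrinsic argument on the $O(n)$-bundle is cleaner---but the paper does it to set up the notation $P^1M$, $\theta+\gamma$, and $\mathcal{F}_{aff}$ that is reused throughout the next subsection on holonomy reductions and equivalence of metrics with the same Levi--Civita connection. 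Your invocation of Theorem~\ref{rigid} to pin down the natural isomorphism is harmless but not really needed: the bundle map $\ga\to P_{O(n)}M$ determined by $\om_{\mathbb{R}^n}$ is already canonical, and effectiveness only confirms there is no other morphism covering the same base map.
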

\begin{proof}
In order to fix the notation for further purposes, we present here proof based on existence and properties of Levi--Civita connection, c.f. \cite{KoNo} for the explicit details.

Let $(\pi: \ga\to M,\om)$ be a Cartan geometry of type $(\mathbb{R}^n\oplus \frak{so}(n),O(n),\Ad)$. The natural inclusion of $\mathbb{R}^n\rtimes O(n)\subset \mathbb{R}^n\rtimes Gl(n)$ defines an extension functor from $\mathcal{C}^{(\mathbb{R}^n\oplus \frak{so}(n),O(n),\Ad)}$ to $\mathcal{C}^{(\mathbb{R}^n\oplus \frak{gl}(n),Gl(n),\Ad)}$, which we denote as $\mathcal{F}_{aff}$. Then we can decompose $\mathcal{F}_{aff}(\om)=\theta+\gamma$ according to the values to $\mathbb{R}^n$ and $\frak{gl}(n)$. The map $T_u\pi\circ \theta^{-1}: \mathbb{R}^n\to T_{\pi(u)}M$ identifies each $u\in \mathcal{F}_{aff}(\ga)$ with a frame of $T_{p(u)}M$ and thus we can identify $\mathcal{F}_{aff}(\ga)$ with the first order frame bundle $P^1M$. Then $\ga\subset P^1M$ is a $O(n)$--subbundle of $P^1M$, i.e., a Riemannian geometry. Further, $\gamma$ is the  connection form of the Levi--Civita connection of the Riemannian geometry if and only if $T=0$. Since $\mathcal{F}_{aff}$ is extension functor, the morphisms in $\mathcal{F}_{aff}(\mathcal{C}^{(\mathbb{R}^n\oplus \frak{so}(n),O(n),\Ad)})$ preserve the subbundle $\ga\subset P^1M$, i.e., they are morphisms of Riemannian geometries.

Conversely, the Levi--Civita connection $\gamma$ on $P^1M$ and the natural soldering form $\theta$ on $P^1M$ form together a Cartan connection $\theta+\gamma$ of type $(\mathbb{R}^n\oplus \frak{gl}(n),Gl(n),\Ad)$.  If $\ga\subset P^1M$ is a $O(n)$--subbundle, then the Levi--Civita connection $\gamma$ restricts to $\frak{so}(n)$--valued form on $T\ga$ and $(\theta+\gamma)|_{T\ga}$ is Cartan connection of type $(\mathbb{R}^n\oplus \frak{so}(n),O(n),\Ad)$ such that $T=0$. The morphisms of Riemannian geometries are morphisms of the Levi--Civita connection $\gamma$ that preserve the subbundle $\ga\subset P^1M$ and thus restrict to morphisms in $\mathcal{C}^{(\mathbb{R}^n\oplus \frak{so}(n),O(n),\Ad)}$.
\end{proof}

A homothety of Riemannian metrics $\ga\subset P^1M$ and $\ga'\subset P^1M'$ is diffeomorphism $f: M\to M'$ such that $(P^1f)^*(\ga')=r^{a\cdot \id}\circ \ga$ holds for some $a\cdot \id$ in the center $Z(Gl(n))$ of $Gl(n)$. Thus if $a\neq 1$, then $P^1f$ is not morphism in the category $\mathcal{C}^{(\mathbb{R}^n\oplus \frak{so}(n),O(n),\Ad)}$. On the other hand, we know that $r^{a\cdot \id}\in Ext(\mathbb{R}^n\oplus \frak{gl}(n),Gl(n),\Ad)\mathcal{C}^{(\mathbb{R}^n\oplus \frak{gl}(n),Gl(n),\Ad)}$ and the extension $(\Ad(a\cdot \id)^{-1},\id)$ restricts to extension in $Ext(\mathbb{R}^n\oplus \frak{so}(n),O(n),\Ad)$. Therefore we have proven the equivalence of categories (1) and (2) in the following Proposition.

\begin{prop}\label{riehom}
There is equivalence of categories between the following categories:

\begin{enumerate}
\item The category of Riemannian geometries with homotheties.
\item The subcategory of $Z(Gl(n))\mathcal{C}^{(\mathbb{R}^n\oplus \frak{so}(n),O(n),\Ad)}$ consisting of Cartan geometries satisfying $T=0$.
\item The subcategory of $\mathcal{C}^{(\mathbb{R}^n \oplus  \frak{co}(n),CO(n),\Ad)}$ consisting of Cartan geometries in the image of $\mathcal{F}_{\mathcal{S}}$ satisfying $T=0$ for $\mathcal{S}=Z(Gl(n))$.
\end{enumerate}
\end{prop}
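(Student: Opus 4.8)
The plan is to establish the equivalence between categories (2) and (3) in Proposition \ref{riehom}, since the equivalence of (1) and (2) has already been argued in the discussion preceding the statement. The key observation is that $\frak{co}(n)=\frak{so}(n)\oplus \R\cdot\id$ is exactly $\fl\oplus\fs$ for the skeleton $(\fk,L,\rho)=(\mathbb{R}^n\oplus\frak{so}(n),O(n),\Ad)$ together with the choice $\mathcal{S}=Z(Gl(n))$, whose Lie algebra is $\fs=\R\cdot\id\subset\frak{gl}(n)$. Concretely, $CO(n)=O(n)\rtimes_{pr_2}Z(Gl(n))$ because $Z(Gl(n))=\R_{>0}\cdot\id$ acts trivially by conjugation on $O(n)$, so $L\rtimes_{pr_2}\mathcal{S}=L\times\mathcal{S}$ is the direct product $O(n)\times\R_{>0}$, which is precisely $CO(n)$. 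Similarly $\fk\oplus\fs=\mathbb{R}^n\oplus\frak{so}(n)\oplus\R\cdot\id=\mathbb{R}^n\oplus\frak{co}(n)$, and unwinding the definition of $\rho_{\mathcal{S}}$ in Section \ref{sec2} shows it coincides with $\Ad$ (the restriction of the adjoint action of $\mathbb{R}^n\rtimes Gl(n)$ to $CO(n)$). Hence $(\fk\oplus\fs,L\rtimes_{pr_2}\mathcal{S},\rho_{\mathcal{S}})=(\mathbb{R}^n\oplus\frak{co}(n),CO(n),\Ad)$ as skeletons.

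Next I would apply Theorem \ref{main1} with this $\mathcal{S}$: it gives a full and faithful functor $\mathcal{F}_{\mathcal{S}}$ from $\mathcal{SC}^{(\mathbb{R}^n\oplus\frak{so}(n),O(n),\Ad)}$ into $\mathcal{C}^{(\mathbb{R}^n\oplus\frak{co}(n),CO(n),\Ad)}$. By the first part of Theorem \ref{main1}, $\mathcal{F}_{\mathcal{S}}$ is full and faithful, so it restricts to an equivalence between $\mathcal{SC}^{(\mathbb{R}^n\oplus\frak{so}(n),O(n),\Ad)}$ and its essential image, namely the full subcategory $\mathcal{F}_{\mathcal{S}}(\mathcal{SC}^{(\mathbb{R}^n\oplus\frak{so}(n),O(n),\Ad)})$ of $\mathcal{C}^{(\mathbb{R}^n\oplus\frak{co}(n),CO(n),\Ad)}$, which is exactly the subcategory described in (3) as ``Cartan geometries in the image of $\mathcal{F}_{\mathcal{S}}$''. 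What remains is to check that this equivalence matches up the torsion condition on both sides.

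For the torsion, recall that $\mathbb{R}^n$ is the unique $O(n)$--invariant complement of $\frak{so}(n)$ in $\mathbb{R}^n\oplus\frak{so}(n)$ and also the unique $CO(n)$--invariant complement of $\frak{co}(n)$ in $\mathbb{R}^n\oplus\frak{co}(n)$, since $\R\cdot\id$ acts as scalars and cannot mix with $\mathbb{R}^n$; so the torsion $T=d\om|_{\wedge^2(\mathbb{R}^n)^*\otimes\mathbb{R}^n}$ is defined on both types. By the construction of $\mathcal{F}_{\mathcal{S}}$ as (a right translate of) the extension functor $\mathcal{F}_{(\alpha_\iota,\iota)}$, the pullback of $\mathcal{F}_{\mathcal{S}}(\om)$ along the natural inclusion $\iota$ equals $\alpha_\iota\circ\om$, and $\alpha_\iota$ is just the inclusion $\mathbb{R}^n\oplus\frak{so}(n)\hookrightarrow\mathbb{R}^n\oplus\frak{co}(n)$; hence the $\mathbb{R}^n$--component of $d(\mathcal{F}_{\mathcal{S}}(\om))$ restricted along $\iota$ agrees with the $\mathbb{R}^n$--component of $d\om$, i.e. the torsions correspond under $\mathcal{F}_{\mathcal{S}}$, and $T=0$ for $(\ba\to M,\om)$ if and only if $T=0$ for $\mathcal{F}_{\mathcal{S}}(\ba\to M,\om)$. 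Combining this with the equivalence of categories (1) and (2) already established (Lemma \ref{riecat} plus the discussion of the extension $(\Ad(a\cdot\id)^{-1},\id)$), and composing the functors, yields the chain of equivalences (1) $\simeq$ (2) $\simeq$ (3). The main obstacle I anticipate is the bookkeeping in the last step: one must verify carefully that the torsion is genuinely intrinsic to the Cartan connection (independent of the non-canonical splitting $\fk=\alpha(\fg)+\fl$ issues that appear for general $\mathcal{F}_{\mathcal{S}}$) and that the right translation by $(e,(\beta,j))$ built into the definition of $\mathcal{F}_{\mathcal{S}}$ does not disturb the $\mathbb{R}^n$--component, which it does not because $Z(Gl(n))$ acts on $\mathbb{R}^n$ by scalars and hence preserves the subspace on which $T$ is evaluated.
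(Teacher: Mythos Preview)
Your proposal is correct and follows essentially the same route as the paper: identify the skeleton $(\fk\oplus\fs,\,L\rtimes_{pr_2}\mathcal{S},\,\rho_{\mathcal{S}})$ for $\mathcal{S}=Z(Gl(n))$ with $(\mathbb{R}^n\oplus\frak{co}(n),\,CO(n),\,\Ad)$ and then invoke Theorem~\ref{main1} to obtain the equivalence (2)\,$\simeq$\,(3). Your explicit check that the torsion condition $T=0$ is preserved by $\mathcal{F}_{\mathcal{S}}$ is a detail the paper leaves implicit; one small slip is that $Z(Gl(n))=\mathbb{R}^*\cdot\id$ rather than $\mathbb{R}_{>0}\cdot\id$, but this does not affect the argument since the paper makes the same identification with $CO(n)$.
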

\begin{proof}
Since for $\mathcal{S}=Z(Gl(n))$ is $(\mathbb{R}^n\oplus \frak{so}(n)\oplus \fs,O(n)\rtimes_{pr_2}\mathcal{S},(\Ad)_{\mathcal{S}})=(\mathbb{R}^n \oplus  \frak{co}(n),CO(n),\Ad)$ and $\mathcal{F}_{\mathcal{S}}$ is the extension functor induced by inclusion $\mathbb{R}^n\rtimes O(n)\subset \mathbb{R}^n \rtimes CO(n)$, the equivalence of categories (2) and (3) follows from the Theorem \ref{main1}.
\end{proof}

Let us remark that the image of $\mathcal{F}_{\mathcal{S}}$ does not exhaust the whole subcategory of $\mathcal{C}^{(\mathbb{R}^n \oplus  \frak{co}(n),CO(n),\Ad)}$ consisting of Cartan geometries satisfying $T=0$. Indeed, according to \cite[Section 1.6]{parabook}, Cartan geometry $(\ga\to M,\om)$ of type $(\mathbb{R}^n \oplus  \frak{co}(n),CO(n),\Ad)$ is in the image of $\mathcal{F}_{\mathcal{S}}$ if and only if there is a global $O(n)$--equivariant section $\ga/Z(Gl(n))\to \ga$. 

Since the inclusion $\mathbb{R}^n \rtimes CO(n)\subset \mathbb{R}^n\rtimes Gl(n)$ induces extension of $(\mathbb{R}^n \oplus  \frak{co}(n),CO(n),\Ad)$ to $(\mathbb{R}^n\oplus \frak{gl}(n),Gl(n),\Ad)$ assigns again to Cartan geometries in the image of $\mathcal{F}_{\mathcal{S}}$ the Levi--Civita connection of the original Riemannian metric, we get the following consequence.

\begin{cor}\label{homtoaff}
If $f$ is a homothety of Riemannian metrics, then $P^1f$ is an isomorphisms of the corresponding Levi--Civita connections.
\end{cor}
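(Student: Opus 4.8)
\textbf{Proof plan for Corollary \ref{homtoaff}.}

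The plan is to chain together the equivalences of categories already established. First I would recall from Proposition \ref{riehom} that a homothety $f$ of Riemannian metrics, viewed via the equivalence of categories (1) and (2), corresponds to an $(\Ad(a\cdot\id)^{-1},\id)$--morphism in $Z(Gl(n))\mathcal{C}^{(\mathbb{R}^n\oplus \frak{so}(n),O(n),\Ad)}$ between the Cartan geometries $(\ga\to M,\om)$ and $(\ga'\to M',\om')$ encoding the two Riemannian metrics (both satisfying $T=0$). Here $a\cdot\id$ is the element of $Z(Gl(n))$ realizing the scaling factor of the homothety.

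Next I would apply the extension functor $\mathcal{F}_{aff}$ from Lemma \ref{riecat} induced by the inclusion $\mathbb{R}^n\rtimes O(n)\subset \mathbb{R}^n\rtimes Gl(n)$, or rather its version $\mathcal{F}_{(\Ad,\iota),F}$ in the sense of Proposition \ref{extinsc}. Concretely, the inclusion $\mathbb{R}^n\rtimes CO(n)\subset \mathbb{R}^n\rtimes Gl(n)$ determines an extension of $(\mathbb{R}^n\oplus \frak{co}(n),CO(n),\Ad)$ to $(\mathbb{R}^n\oplus \frak{gl}(n),Gl(n),\Ad)$; composing with the equivalence (2)$\Leftrightarrow$(3) of Proposition \ref{riehom} one obtains an extension functor which, as observed in the paragraph preceding the Corollary, sends a Cartan geometry in the image of $\mathcal{F}_{\mathcal{S}}$ to the first order frame bundle $P^1M$ equipped with the soldering form $\theta$ and the Levi--Civita connection $\gamma$ of the original metric (this is exactly the content of the proof of Lemma \ref{riecat}). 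Since $(\Ad(a\cdot\id)^{-1},\id)\in Ext(\mathbb{R}^n\oplus\frak{gl}(n),Gl(n),\Ad)$ is precisely the extension whose associated $\mathcal{S}$--automorphism of each Cartan geometry of type $(\mathbb{R}^n\oplus\frak{gl}(n),Gl(n),\Ad)$ is the right multiplication $r^{a\cdot\id}$ (Lemma \ref{nonefext}), the image of the homothety morphism under this extension functor is exactly the map $P^1f$ composed with $r^{a\cdot\id}$, which by the defining relation $(P^1f)^*(\ga')=r^{a\cdot\id}\circ\ga$ of a homothety is an honest $Gl(n)$--bundle morphism $P^1M\to P^1M'$ pulling back $\theta'+\gamma'$ to $\theta+\gamma$; in particular it pulls back $\gamma'$ to $\gamma$, i.e.\ it is an isomorphism of the Levi--Civita connections.

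The only point requiring a little care — and the main (mild) obstacle — is bookkeeping the extension data: one must verify that the composite extension functor from $(\mathbb{R}^n\oplus\frak{co}(n),CO(n),\Ad)$ to $(\mathbb{R}^n\oplus\frak{gl}(n),Gl(n),\Ad)$ indeed carries the $(\Ad(a\cdot\id)^{-1},\id)$--morphism produced by Proposition \ref{riehom} to the $Gl(n)$--bundle morphism $P^1f$ (an ordinary morphism, since the target skeleton absorbs the scaling $a\cdot\id$ into $Gl(n)$ itself), and that functoriality of $\mathcal{F}_{(\alpha,i)}$ is compatible with the equivalences of categories invoked. All of this is a routine application of Theorem \ref{cartoprinc}, Proposition \ref{extinsc} and Theorem \ref{main1}, together with the concrete description of $\mathcal{F}_{aff}$ in the proof of Lemma \ref{riecat}; no new computation beyond unwinding these identifications is needed.
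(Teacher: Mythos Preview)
Your proposal is correct and follows essentially the same route as the paper: the paper's argument is the one-sentence observation preceding the corollary, namely that the extension induced by $\mathbb{R}^n\rtimes CO(n)\subset \mathbb{R}^n\rtimes Gl(n)$ sends each Cartan geometry in the image of $\mathcal{F}_{\mathcal{S}}$ back to the Levi--Civita connection of the original metric, so that the ordinary morphism in $\mathcal{C}^{(\mathbb{R}^n\oplus\frak{co}(n),CO(n),\Ad)}$ corresponding (via Proposition~\ref{riehom}) to the homothety is carried by functoriality to a morphism of the Levi--Civita connections. Your write-up unwinds this through category~(2) rather than jumping straight to category~(3), and invokes Lemma~\ref{nonefext} and Proposition~\ref{extinsc} to make explicit how the scaling $a\cdot\id$ is absorbed into $Gl(n)$, but the substance is identical.
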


\subsection{Morphisms induced by extension functors and equivalence of Riemannian metrics with the same Levi--Civita connection}

Let us show, how can we use the $\mathcal{S}$--automorphism groups to compute the space of non--equivalent Riemannian metrics sharing the same Levi--Civita connection. In this section, we will use the notation introduced in the previous section \ref{6,1}.

Let $(P^1M\to M,\om=\theta+\gamma)$ be a Cartan geometry of type $(\mathbb{R}^n\oplus \frak{gl}(n),Gl(n),\Ad)$ satisfying $T=0$. There is the following description of the holonomy reductions (c.f. \cite{KoNo}) rephrased via Cartan geometries. 

Let $Hol(u)$ be the holonomy group of the principal connection $\gamma$ at $u\in P^1M$. Then the natural inclusion $$\iota: \mathbb{R}^n\rtimes Hol(u)\to \mathbb{R}^n\rtimes Gl(n)$$ induces extension $(d\iota,\iota|_{Hol(u)})$ of $(\mathbb{R}^n\oplus \frak{hol}(u),Hol(u),\Ad)$ to $(\mathbb{R}^n\oplus \frak{gl}(n),Gl(n),\Ad)$ and there is unique Cartan connection $\om^u$ of type $(\mathbb{R}^n\oplus \frak{hol}(u),Hol(u),\Ad)$ on the holonomy subbundle $\ga(u)\subset P^1M$ such that $$\iota_u^*(\om)=d\iota\circ \om^u$$ holds for the inclusion $\iota_u: \ga(u)\to P^1M$. In other words, $\om^u$ is the unique Cartan connection of type $(\mathbb{R}^n\oplus \frak{hol}(u),Hol(u),\Ad)$ on $\ga(u)$ such that $(P^1M\to M,\om)$ and $\mathcal{F}_{(d\iota,\iota|_{Hol(u)})}(\ga(u)\to M,\om^{u})$ are isomorphic Cartan geometries of type $(\mathbb{R}^n\oplus \frak{gl}(n),Gl(n),\Ad)$.

We show that we can relate automorphisms of the Cartan geometry $(P^1M\to M,\om)$ with certain $\mathcal{S}$--automorphisms of the holonomy reduction $(\ga(u)\to M,\om^{u})$.

\begin{lem}\label{5,3}
For $\Phi\in \Aut(P^1M,\om)$ and $u\in P^1M$, there is $p\in N_{Gl(n)}(Hol(u))$ such that $\Phi(\ga(u))=\ga(up)$ holds. In particular, $$r^{p^{-1}}\circ \Phi$$ is an $(\Ad(p),\conj(p))$--automorphism of $(\ga(u)\to M,\om^{u})$.
\end{lem}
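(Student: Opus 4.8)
The plan is to track how the automorphism $\Phi$ moves the holonomy subbundle $\ga(u)$ and then to recognize the resulting map as a morphism in the sense of Definition \ref{symdef2}. First I would use the defining property of holonomy subbundles: $\ga(u)$ consists exactly of the points of $P^1M$ that can be joined to $u$ by a $\gamma$--horizontal curve, and $\gamma$ is precisely the principal part of the Cartan connection $\om$. Since $\Phi\in \Aut(P^1M,\om)$ satisfies $\Phi^*\om=\om$, the map $\Phi$ sends $\gamma$--horizontal curves to $\gamma$--horizontal curves, hence it maps $\ga(u)$ to the holonomy subbundle through $\Phi(u)$, i.e.\ $\Phi(\ga(u))=\ga(\Phi(u))$. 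Now $\Phi(u)$ and $u$ project to points $f(\pi(u))$ and $\pi(u)$ of $M$, but after composing with a suitable deck transformation / using connectedness one reduces to the case where $\Phi(u)$ lies in the same fibre as $u$ up to the $Gl(n)$--action; in any case $\Phi(u)=up$ for some $p\in Gl(n)$, so $\Phi(\ga(u))=\ga(up)$.

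The next step is to see that $p$ normalizes $Hol(u)$. This follows because the two holonomy groups $Hol(u)$ and $Hol(up)$ are conjugate by $p$ in the standard way, $Hol(up)=p^{-1}Hol(u)p$; on the other hand $\Phi$ being an automorphism of $(P^1M,\om)$ identifies the holonomy structure at $u$ with that at $\Phi(u)=up$, which forces $Hol(up)=Hol(u)$, and therefore $p^{-1}Hol(u)p=Hol(u)$, i.e.\ $p\in N_{Gl(n)}(Hol(u))$. (Equivalently: the holonomy subbundle $\ga(up)$ has structure group $Hol(up)$, but $\ga(up)=\Phi(\ga(u))$ is a principal $Hol(u)$--subbundle since $\Phi$ is $Gl(n)$--equivariant and preserves $\ga(u)$, forcing $Hol(up)=Hol(u)$.)

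Finally I would identify $r^{p^{-1}}\circ\Phi$ as the claimed morphism. By construction $r^{p^{-1}}\circ\Phi$ maps $\ga(u)$ to $\ga(up)p^{-1}=\ga(u)$, so it restricts to a map $\ga(u)\to\ga(u)$. For the equivariance: $\Phi$ is a $Gl(n)$--bundle morphism and $r^{p^{-1}}$ conjugates the $Hol(u)$--action by $p^{-1}$, so $r^{p^{-1}}\circ\Phi$ is a principal bundle morphism over $\conj(p^{-1})^{-1}=\conj(p)\colon Hol(u)\to Hol(u)$ — here one uses $p\in N_{Gl(n)}(Hol(u))$ so that $\conj(p)$ is an automorphism of $Hol(u)$. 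For the connection condition: using $\Phi^*\om=\om$ together with the transformation rule $(r^{p^{-1}})^*\om=\Ad(p)\circ\om$ from Definition \ref{defcar} of a Cartan connection of type $(\mathbb{R}^n\oplus\frak{gl}(n),Gl(n),\Ad)$, one gets $(r^{p^{-1}}\circ\Phi)^*\om=\Ad(p)\circ\om$, and pulling back along $\iota_u$ and using $\iota_u^*\om=d\iota\circ\om^u$ yields $(r^{p^{-1}}\circ\Phi|_{\ga(u)})^*\om^u=\Ad(p)\circ\om^u$. Comparing with Definition \ref{symdef2}, this says exactly that $r^{p^{-1}}\circ\Phi$ is an $(\Ad(p),\conj(p))$--automorphism of $(\ga(u)\to M,\om^u)$, provided $(\Ad(p),\conj(p))$ lies in the relevant $\mathcal{S}$, which here is $Ext(\mathbb{R}^n\oplus\frak{hol}(u),Hol(u),\Ad)$ — and it does, since $\Ad(p)$ normalizes $\Ad(Hol(u))$ and restricts correctly to $\frak{hol}(u)$ by $p\in N_{Gl(n)}(Hol(u))$, invoking Proposition \ref{extcomp}.

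I expect the main obstacle to be the careful bookkeeping in the first step — namely pinning down that $\Phi(u)=up$ for an honest $p\in Gl(n)$ (rather than just $\Phi(\ga(u))$ being \emph{some} holonomy subbundle) and that the resulting $p$ is independent of auxiliary choices up to elements of $Hol(u)$, which is what makes $(\Ad(p),\conj(p))$ well defined modulo the trivial extensions; the connection--form computation and the normalizer claim are then essentially formal consequences of $\Phi^*\om=\om$ and the equivariance of $\Phi$.
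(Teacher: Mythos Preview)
Your overall strategy matches the paper's, and your verification that $r^{p^{-1}}\circ\Phi$ satisfies Definition~\ref{symdef2} is more careful than what the paper spells out. Your structure--group argument for $p\in N_{Gl(n)}(Hol(u))$ is equivalent to the paper's computation $\ga(up)=\Phi(\ga(uh))=r^h\Phi(\ga(u))=\ga(uph)$ for $h\in Hol(u)$.

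However, the first step contains a real error, which you yourself flag as the weak point. The assertion ``$\Phi(u)=up$ for some $p\in Gl(n)$'' is false in general: $\Phi$ covers a diffeomorphism $f$ of $M$, so $\Phi(u)$ lies in the fiber over $f(\pi(u))$, not over $\pi(u)$, and no ``deck transformation'' or reduction argument repairs this --- composing $\Phi$ with anything changes the automorphism you are analyzing. What is actually true, and what the paper uses, is that $\Phi$ preserves the horizontal distribution $\om^{-1}(\mathbb{R}^n)$, so $\Phi(\ga(u))=\ga(\Phi(u))$ is again a holonomy subbundle; since $M$ is connected, this subbundle meets the fiber $\pi^{-1}(\pi(u))$, and any point of that intersection has the form $up$ with $p\in Gl(n)$, whence $\Phi(\ga(u))=\ga(up)$. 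You already have the identity $\Phi(\ga(u))=\ga(\Phi(u))$; the fix is simply to \emph{choose} $p$ so that $up\in\ga(\Phi(u))$, rather than to claim $up=\Phi(u)$. This $p$ is determined only up to right multiplication by $Hol(u)$, consistent with your closing remark about $(\Ad(p),\conj(p))$ being well defined modulo trivial extensions.

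One smaller slip: in your parenthetical normalizer argument you write that ``$\Phi$\dots preserves $\ga(u)$'', but it does not; what you need (and what makes the argument work) is that $\Phi|_{\ga(u)}$ is $Hol(u)$--equivariant onto its image $\ga(up)$, forcing the structure groups $Hol(u)$ and $Hol(up)=p^{-1}Hol(u)p$ to coincide.
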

\begin{proof}
Since the automorphism $\Phi$ of $(P^1M\to M,\om)$ preserves the horizontal distribution $\om^{-1}(\mathbb{R}^n)$, it holds $\Phi(\ga(u))=\ga(up)$ for some $p\in Gl(n)$ due to construction of the holonomy reduction. Then $p\in N_{Gl(n)}(Hol(u))$, because $$\ga(up)=\Phi(\ga(uh))=r^h\circ \Phi(\ga(u))=\ga(uph)$$ holds for all $h\in Hol(u)$. Thus the claim follows, because $(\Ad(p),\conj(p))\in Ext(\mathbb{R}^n\oplus \frak{hol}(u),Hol(u),\Ad)$ for $p\in N_{Gl(n)}(Hol(u))$ and thus $r^{p^{-1}}\circ \Phi$ is an $(\Ad(p),\conj(p))$--automorphisms of $(\ga(u)\to M,\om^{u})$.
\end{proof}

Let us denote $$\mathcal{S}^{Hol(u)}:=\{(\Ad(p),\conj(p)): p\in N_{Gl(n)}(Hol(u))\}.$$ Since the skeleton $(\mathbb{R}^n\oplus \frak{hol}(u),Hol(u),\Ad)$ is effective, we know from the Theorem \ref{effective} that $$N_ {\mathcal{S}^{Hol(u)}}=\{(h,(\Ad(h)^{-1},\conj(h)^{-1})): h\in Hol(u)\}$$ holds for the kernel of $(\mathbb{R}^n\oplus \frak{hol}(u)\oplus \fs^{Hol(u)},Hol(u)\rtimes_{pr_2}\mathcal{S}^{Hol(u)},(\Ad)_{\mathcal{S}^{Hol(u)}})$ and thus $(\mathbb{R}^n\oplus \fn_{Gl(n)}(Hol(u)),N_{Gl(n)}(Hol(u)),\Ad)$ is the effective quotient of $(\mathbb{R}^n\oplus \frak{hol}(u)\oplus \fs^{Hol(u)},Hol(u)\rtimes_{pr_2}\mathcal{S}^{Hol(u)},(\Ad)_{\mathcal{S}^{Hol(u)}})$, because we can identify element $(h,\Ad(p),\conj(p))\in Hol(u)\rtimes_{pr_2}\mathcal{S}^{Hol(u)}/N_ {\mathcal{S}^{Hol(u)}}$ with element $hp\in N_{Gl(n)}(Hol(u))$. Therefore we obtain the following application of the computation of the $\mathcal{S}$--automorphism group.

\begin{thm}\label{autext}
Let $(P^1M\to M,\om)$ be a Cartan geometry of type $(\mathbb{R}^n\oplus \frak{gl}(n),Gl(n),\Ad)$ satisfying $T=0$ and $(\ga(u)\to M,\om^u)$ be the holonomy reduction of $(P^1M\to M,\om)$ at $u\in P^1M$. Then
\begin{align*}
\Aut(P^1M,\om)&\cong \mathcal{S}^{Hol(u)}\Aut(\ga(u),\om^u)/\{r^h: h\in Hol(u)\}\\
&\cong \Aut(\mathcal{F}_{eff}\circ \mathcal{F}_{\mathcal{S}^{Hol(u)}}(\ga(u)\to M,\om^u)).
\end{align*}
In particular, $dim(\Aut(P^1M,\om))$ is at most $n+dim(\frak{s}^{Hol(u)})$.
\end{thm}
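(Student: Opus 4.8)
The plan is to reduce both displayed isomorphisms and the dimension bound to a single map $\Psi:\Aut(P^1M,\om)\to \mathcal{S}^{Hol(u)}\Aut(\ga(u),\om^u)/\{r^h:h\in Hol(u)\}$. The second isomorphism is already essentially at hand: it is Theorem \ref{effective} applied to the effective skeleton $(\mathbb{R}^n\oplus\frak{hol}(u),Hol(u),\Ad)$ with $\mathcal{S}=\mathcal{S}^{Hol(u)}$, the only point being that the denominator $\{r^l:(\Ad(l^{-1}),\conj(l)^{-1})\in\mathcal{S}^{Hol(u)}\}$ occurring there is exactly $\{r^h:h\in Hol(u)\}$, which follows from the description of $N_{\mathcal{S}^{Hol(u)}}$ recorded just before the theorem. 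Moreover $\mathcal{F}_{eff}\circ\mathcal{F}_{\mathcal{S}^{Hol(u)}}(\ga(u)\to M,\om^u)$ is a Cartan geometry of type $(\mathbb{R}^n\oplus\fn_{Gl(n)}(Hol(u)),N_{Gl(n)}(Hol(u)),\Ad)$, whose model space has dimension $n+\dim(\fn_{Gl(n)}(Hol(u)))=n+\dim(\frak{s}^{Hol(u)})$, so once the first isomorphism is established the dimension estimate follows from Theorem \ref{Liegp}.

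So the real content is the first isomorphism. I would define $\Psi(\Phi):=[\,r^{p^{-1}}\circ\Phi|_{\ga(u)}\,]$, where, given $\Phi\in\Aut(P^1M,\om)$, the element $p\in N_{Gl(n)}(Hol(u))$ is the one produced by Lemma \ref{5,3}, so that $\Phi(\ga(u))=\ga(up)$ and $r^{p^{-1}}\circ\Phi|_{\ga(u)}$ is an $(\Ad(p),\conj(p))$--automorphism of $(\ga(u)\to M,\om^u)$. The first step is well--definedness. The admissible $p$'s form a single coset $Hol(u)\cdot p$, and when $p$ is replaced by $hp$ with $h\in Hol(u)$ one computes, using that $\Phi$ is a $Gl(n)$--bundle morphism (hence commutes with right translations) and that $p$ normalises $Hol(u)$, that $r^{p^{-1}}\circ\Phi|_{\ga(u)}$ is merely post--composed with a right translation by an element of $Hol(u)$; since $\{r^h:h\in Hol(u)\}$ is the normal subgroup of trivial $\mathcal{S}^{Hol(u)}$--automorphisms, the class is unchanged. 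That $\Psi$ is a group homomorphism follows from the same commutation property together with the identity $(\Phi_1\circ\Phi_2)(\ga(u))=\ga(up_1p_2)$ when $\Phi_i(\ga(u))=\ga(up_i)$.

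For injectivity, suppose $\Psi(\Phi)$ is trivial, say $r^{p^{-1}}\circ\Phi|_{\ga(u)}=r^{h_0}|_{\ga(u)}$ with $h_0\in Hol(u)$; then $\Phi|_{\ga(u)}=r^{h_0p}|_{\ga(u)}$, so $\Phi$ sends $\ga(u)$ into $\ga(up)$ and covers $\id_M$ on the base (because $\ga(u)\to M$ is surjective and $r^{h_0p}$ covers $\id_M$). Since $(\mathbb{R}^n\oplus\frak{gl}(n),Gl(n),\Ad)$ is effective, Theorem \ref{rigid} applied to $\Phi$ and $\id_{P^1M}$ gives $\Phi=\id_{P^1M}$. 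For surjectivity, given an $(\Ad(p),\conj(p))$--automorphism $\phi$ of $(\ga(u)\to M,\om^u)$ with $p\in N_{Gl(n)}(Hol(u))$, consider $r^{p}\circ\iota_u\circ\phi:\ga(u)\to P^1M$, where $\iota_u$ is the inclusion of the holonomy subbundle. Using $\iota_u^{*}\om=d\iota\circ\om^u$, the rule $(r^{p})^{*}\om=\Ad(p)^{-1}\circ\om$, and that $\Ad(p)$ preserves $\mathbb{R}^n\oplus\frak{hol}(u)$ and commutes with $d\iota$ (here $p\in N_{Gl(n)}(Hol(u))$ is used), one checks that $r^{p}\circ\iota_u\circ\phi$ is a principal bundle morphism over the inclusion $Hol(u)\hookrightarrow Gl(n)$ which pulls $\om$ back to $d\iota\circ\om^u$. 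By Proposition \ref{altdef} and the isomorphism $(P^1M,\om)\cong\mathcal{F}_{(d\iota,\iota|_{Hol(u)})}(\ga(u)\to M,\om^u)$ recorded before Lemma \ref{5,3}, such a morphism prolongs to a unique endomorphism $\Phi$ of $(P^1M,\om)$ in $\mathcal{C}^{(\mathbb{R}^n\oplus\frak{gl}(n),Gl(n),\Ad)}$ with $\Phi\circ\iota_u=r^{p}\circ\iota_u\circ\phi$; the construction is reversible, so $\Phi\in\Aut(P^1M,\om)$, and $\Psi(\Phi)=[\phi]$ by construction. Finally, composing $\Psi$ with the second isomorphism and the functor isomorphism of Theorem \ref{main1} realises $\Aut(P^1M,\om)$ as the automorphism group of $\mathcal{F}_{eff}\circ\mathcal{F}_{\mathcal{S}^{Hol(u)}}(\ga(u)\to M,\om^u)$, so all three groups are isomorphic as Lie groups.

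I expect the main obstacle to be the surjectivity step — precisely the verification that $r^{p}\circ\iota_u\circ\phi$ is a morphism relative to the honest inclusion $Hol(u)\hookrightarrow Gl(n)$ (and not relative to a conjugated inclusion), so that it prolongs to a global automorphism of $(P^1M,\om)$ rather than merely to a map on the holonomy subbundle. This is where the normaliser condition and the compatibility of $\Ad(p)$ with $d\iota$ genuinely enter, and it is intertwined with the left/right coset bookkeeping for $Hol(u)$ that also appears in the well--definedness and homomorphism checks.
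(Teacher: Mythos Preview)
Your argument is correct and follows the same outline as the paper: Lemma \ref{5,3} provides the map $\Aut(P^1M,\om)\to \mathcal{S}^{Hol(u)}\Aut(\ga(u),\om^u)/\{r^h\}$, Theorem \ref{effective} gives the second isomorphism and identifies the denominator, and Theorem \ref{Liegp} gives the dimension bound.

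The one place where you diverge from the paper is the surjectivity step. You build the lift by hand: given an $(\Ad(p),\conj(p))$--automorphism $\phi$ you form $r^{p}\circ\iota_u\circ\phi$, check directly that it is a $(d\iota,\iota|_{Hol(u)})$--morphism, and invoke Proposition \ref{altdef} to prolong it to an automorphism of $(P^1M,\om)$. The paper instead observes that the inclusion $\mathbb{R}^n\rtimes N_{Gl(n)}(Hol(u))\hookrightarrow \mathbb{R}^n\rtimes Gl(n)$ induces an extension such that
\[
\mathcal{F}_{(d\iota,\iota|_{N_{Gl(n)}(Hol(u))})}\circ\mathcal{F}_{eff}\circ\mathcal{F}_{\mathcal{S}^{Hol(u)}}(\ga(u)\to M,\om^u)=(P^1M\to M,\om),
\]
so functoriality of the extension functor immediately sends automorphisms of $\mathcal{F}_{eff}\circ\mathcal{F}_{\mathcal{S}^{Hol(u)}}(\ga(u),\om^u)$ to automorphisms of $(P^1M,\om)$, and Theorem \ref{effective} closes the loop. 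Your route makes the role of the normaliser condition and of $\conj(p)$ completely explicit (your computation $r^{p}\circ\iota_u\circ\phi(vh)=(r^{p}\circ\iota_u\circ\phi(v))\cdot h$ is exactly where $php^{-1}\in Hol(u)$ is used); the paper's route hides this inside the statement that the relevant map is an extension of skeletons, and gets injectivity and surjectivity in one stroke without a separate appeal to Theorem \ref{rigid}. Both are short once the machinery is in place.
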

\begin{proof}
The map from $\Aut(P^1M,\om)$ to $\mathcal{S}^{Hol(u)}\Aut(\ga(u),\om^u)$ defined in Lemma \ref{5,3} depends on the choice of point in $\ga(u)$, but on the other hand the reduction $(\ga(uh)\to M,\om^{uh})$ at $uh$ for $h\in Hol(u)$ differs from $(\ga(u)\to M,\om^u)$ by the trival $\mathcal{S}^{Hol(u)}$--automorphism $r^h$. Therefore $\Aut(P^1M,\om)\subset \mathcal{S}^{Hol(u)}\Aut(\ga(u),\om^u)/\{r^h: h\in Hol(u)\}$ holds.

Conversely, the natural inclusion $\iota$ of $\mathbb{R}^n\oplus N_{Gl(n)}(Hol(u))$ to $\mathbb{R}^n\oplus Gl(n)$ induces extension $(d\iota,\iota|_{N_{Gl(n)}(Hol(u))})$ of $(\mathbb{R}^n\oplus \fn_{Gl(n)}(Hol(u)),N_{Gl(n)}(Hol(u)),\Ad)$ to $(\mathbb{R}^n\oplus \frak{gl}(n),Gl(n),\Ad)$ is extension and it is clear that  $\mathcal{F}_{(d\iota,\iota|_{\mathcal{S}^{Hol(u)}})}\circ\mathcal{F}_{eff}\circ \mathcal{F}_{\mathcal{S}^{Hol(u)}}(\ga(u)\to M,\om^u))=(P^1M\to M,\om)$ holds. Therefore the rest of the claim follows from the Theorem \ref{effective}.
\end{proof}

It is well--known (c.f. \cite{KoNo}) that $\gamma$ is the connection form of a Levi--Civita connection of some Riemannian metric on $M$ if and only if the holonomy group $Hol(u)$ is subgroup of $O(n)$ for some $u\in P^1M$. In particular, if $Hol(u)\subset O(n)$, then the inclusion $$\iota: \mathbb{R}^n\rtimes Hol(u)\to \mathbb{R}^n\rtimes O(n)$$ induces extension $(d\iota,\iota|_{Hol(u)})$ of $(\mathbb{R}^n\oplus \frak{hol}(u),Hol(u),\Ad)$ to $(\mathbb{R}^n\oplus \frak{so}(n),O(n),\Ad)$ and $\mathcal{F}_{(d\iota,\iota|_{Hol(u)})}(\ga(u)\to M,\om^{u})$ is a Cartan geometry encoding a Riemannian geometry according to Lemma \ref{riecat}. All natural inclusions $\iota$ mentioned in this section only differer by the choices of subgroups off $Gl(n)$ and thus $\gamma$ is the connection form of the Levi--Civita connection of the Riemannian geometry corresponding to $\mathcal{F}_{(d\iota,\iota|_{Hol(u)})}(\ga(u)\to M,\om^{u})$,

Of course, since the holonomy reduction $(\ga(u)\to M,\om^{u})$ depends on the choice of $u$, there can be many $u\in P^1M$ such that the holonomy group $Hol(u)$ is subgroup of $O(n)$. The problem is to determine, for which $u\in P^1M$ such that $Hol(u)\subset O(n)$ the Cartan geometries $\mathcal{F}_{(d\iota,\iota|_{Hol(u)})}(\ga(u)\to M,\om^{u})$ of type $(\mathbb{R}^n\oplus \frak{so}(n),O(n),\Ad)$ correspond to not equivalent Reimannian geometries.

Let us fix point $u\in P^1M$ and assume that $Hol(u)\subset O(n)$ holds for the holonomy reduction $(\ga(u)\to M,\om^{u})$ of $(P^1M\to M,\om)$ in the rest of this section. If $u'\in \ga(u)$, then $(\ga(u')\to M,\om^{u'})=(\ga(u)\to M,\om^{u})$, and if $p\in Gl(\mathbb{R}^n)$, then $(\ga(up)\to M,\om^{up})$ is Cartan geometry of type $(\mathbb{R}^n\rtimes \conj(p)^{-1}(Hol(u)),\conj(p)^{-1}(Hol(u)))$. Therefore we need to determine for which $p\in Gl(n)$ satisfying $\conj(p)^{-1}(Hol(u))\subset O(n)$ are the Cartan geometries $\mathcal{F}_{(d\iota,\iota|_{Hol(u)})}(\ga(u)\to M,\om^{u})$ and $\mathcal{F}_{(d\iota,\iota|_{\conj(p)^{-1}Hol(u)})}(\ga(up)\to M,\om^{up})$ of type $(\mathbb{R}^n\oplus \frak{so}(n),O(n),\Ad)$ isomorphic. Clearly, if $uk\in \ga(up)$ for some $k\in O(n)$, the Cartan geometries of type $(\mathbb{R}^n\oplus \frak{so}(n),O(n),\Ad)$ provide the same reduction of $P^1M$ to $O(n)$, i.e., the same Riemannian metric, which leads to the following well--known result.

\begin{lem}\label{reddif}
The set of Riemannian metrics on $M$ such that the connection forms of their Levi--Civita connections coincide with $\gamma$ is in bijective correspondence with the set $$Z_{\exp(S^2\mathbb{R}^n)}(Hol(u)).$$
\end{lem}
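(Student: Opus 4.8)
The plan is to identify the set of such metrics with the set of $\gamma$--parallel reductions of $P^1M$ to $O(n)$, and then to coordinatize a parallel reduction by its Gram matrix read off in the fixed frame $u$. First recall that a Riemannian metric $g$ on $M$ is the same datum as its orthonormal frame bundle $Q_g\subset P^1M$, an $O(n)$--reduction of the first order frame bundle, and that, because $T=0$, the form $\gamma$ is the connection form of the Levi--Civita connection of $g$ precisely when $\gamma$ restricts to an $\frak{so}(n)$--valued form on $TQ_g$, i.e. precisely when $Q_g$ is $\gamma$--parallel. By the reduction theorem (c.f. \cite{KoNo}) a $\gamma$--parallel $O(n)$--reduction is exactly a sub-bundle of the form $\ga(v)\cdot O(n)$ with $Hol(v)\subset O(n)$; since $M$ is connected every such sub-bundle meets the fibre over $\pi(u)$, hence equals $\ga(up)\cdot O(n)$ for some $p\in Gl(n)$ with $\conj(p)^{-1}(Hol(u))=Hol(up)\subset O(n)$, and conversely each such $p$ produces one. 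This is the book-keeping set up in the paragraph preceding the statement, and two admissible $p,p'$ give the same $O(n)$--reduction, hence the same metric, exactly when $up'\in \ga(up)\cdot O(n)$.

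To each admissible $p$ I attach the positive definite symmetric bilinear form $B_p:=\langle p^{-1}(\cdot),p^{-1}(\cdot)\rangle_0$ on $\mathbb{R}^n$, i.e. the Gram matrix $p^{-\top}p^{-1}$, in the frame $u$, of the metric whose orthonormal frames are $\ga(up)\cdot O(n)$; this is an element of $\exp(S^2\mathbb{R}^n)$. I would then verify by short matrix computations the following two points: (i) $B_p=B_{p'}$ iff $p^{-1}p'\in O(n)$ iff $\ga(up)\cdot O(n)=\ga(up')\cdot O(n)$, so that $p\mapsto B_p$ descends to an injection from the set of $\gamma$--parallel $O(n)$--reductions into $\exp(S^2\mathbb{R}^n)$ — this is the classical diffeomorphism $Gl(n)/O(n)\xrightarrow{\sim}\exp(S^2\mathbb{R}^n)$, with $B\in\exp(S^2\mathbb{R}^n)$ realized by $p=B^{-1/2}$; and (ii) for $h\in O(n)$ one has $p^{-1}hp\in O(n)$ iff $h$ preserves $B_p$, i.e. (using $h^{\top}=h^{-1}$) iff $hB_p=B_ph$, so that the admissibility condition $\conj(p)^{-1}(Hol(u))\subset O(n)$ is equivalent to $B_p$ commuting with every element of $Hol(u)$, that is, to $B_p\in Z_{\exp(S^2\mathbb{R}^n)}(Hol(u))$. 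Combining (i) and (ii) gives the asserted bijection. Equivalently, and more conceptually, a metric with Levi--Civita form $\gamma$ is a $\gamma$--parallel section of the metric bundle $P^1M\times_{Gl(n)}\exp(S^2\mathbb{R}^n)$, and by the holonomy principle these are in bijective correspondence with the points of $\exp(S^2\mathbb{R}^n)$ fixed by the $Hol(u)$--action, which is exactly $Z_{\exp(S^2\mathbb{R}^n)}(Hol(u))$.

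The statement is classical, so no serious obstacle is expected; the only points that need care are the equivalence in (ii) — the actual translation between a normalizer-type condition on $p$ and a centralizer condition on $B_p=p^{-\top}p^{-1}$ — together with the insistence that ``the same metric'' means ``the same $O(n)$--sub-bundle of $P^1M$'' (and not merely isomorphic Cartan geometries), and the use of the \emph{full} holonomy group $Hol(u)$ rather than the restricted one, since $M$ is connected and we are classifying globally defined metrics.
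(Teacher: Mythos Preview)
Your proposal is correct and follows essentially the same route as the paper: reduce to the polar (Cartan) decomposition $Gl(n)=\exp(S^2\mathbb{R}^n)\cdot O(n)$ and translate the condition $\conj(p)^{-1}(Hol(u))\subset O(n)$ into a centralizer condition on the symmetric part, the paper doing this via the Cartan involution $\theta(A)=(A^{\top})^{-1}$ and you via the equivalent direct matrix identity $p^{-1}hp\in O(n)\Leftrightarrow hB_p=B_ph$. The only cosmetic difference is that you parametrize by the Gram matrix $B_p=p^{-\top}p^{-1}=\exp(-2X)$ rather than by the polar factor $\exp(X)$ itself (so your bijection differs from the paper's by the automorphism $\exp(X)\mapsto\exp(-2X)$ of $Z_{\exp(S^2\mathbb{R}^n)}(Hol(u))$), and your closing remark via the holonomy principle for parallel sections of the metric bundle is a pleasant conceptual alternative not present in the paper.
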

\begin{proof}
If the Cartan geometries $\mathcal{F}_{(d\iota,\iota|_{Hol(u)})}(\ga(u)\to M,\om^{u})$ and $\mathcal{F}_{(d\iota,\iota|_{\conj(p)^{-1}Hol(u)})}(\ga(up)\to M,\om^{up})$ of type $(\mathbb{R}^n\oplus \frak{so}(n),O(n),\Ad)$ correspond to different Riemannian metrics, then there is no $k\in O(n)$ such that $uk\in \ga(up)$. Therefore it suffices to prove that the set $Z_{\exp(S^2\mathbb{R}^n)}(Hol(u))$ is in bijective correspondence with the set of elements $p\in Gl(n)/O(n)$ such that $\conj(p)^{-1}(Hol(u))\subset O(n)$.

We recall that there are the Cartan involutions $d\theta(A)=-A^T$ of $\frak{gl}(n)$ and $\theta(A)=(A^T)^{-1}$ of $Gl(n)$ with fixed point sets $\frak{so}(n)$ and $O(n)$, respectively. Since $S^2 \mathbb{R}^n$ is the $-1$--eigenspace of $d\theta$ and $\exp$ is a bijection between $S^2 \mathbb{R}^n$ and the space of all non--degenerate positive--definite symmetric matrices, the global Cartan decomposition of $Gl(n)$ is of the form $Gl(n)=\exp(S^2 \mathbb{R}^n)O(n)$. Let us remark that this decomposition is also known as polar decomposition. Therefore will decompose $p\in Gl(n)$ as $\exp(X)k$ for $\exp(X)\in \exp(S^2 \mathbb{R}^n)$ and $k\in O(n)$.

Therefore it suffices to prove that the set $Z_{\exp(S^2\mathbb{R}^n)}(Hol(u))$ is in bijective correspondence with the set of elements $\exp(X)\in \exp(S^2\mathbb{R}^n)$ such that $$\conj(\exp(X))^{-1}(Hol(u))\subset O(n).$$

If $\conj(\exp(X))^{-1}(Hol(u))\subset O(n)$ for $X\in S^2 \mathbb{R}^n$, then $\exp(-X) h \exp(X)=\theta(\exp(-X) h \exp(X))=\exp(X) h \exp(-X)$ holds for all $h\in Hol(u)$. Thus $\exp(2X)$ centralizes $Hol(u)$ and $\exp(X)\in Z_{\exp(S^2\mathbb{R}^n)}(Hol(u))$, because $\exp(X)$ is diagonalizable.
\end{proof}

So it remains to show, when two elements of $Z_{\exp(S^2\mathbb{R}^n)}(Hol(u))$ correspond to equivalent Riemannian metrics. From the Corollary \ref{homtoaff} follows that an isomorphism of two Riemannian metrics on $M$ such that the connection forms of their Levi--Civita connections coincide with $\gamma$ is an automorphism of $(P^1M\to M,\om)$. Therefore we can apply the Theorem \ref{autext} to obtain the second application of the computation of $\mathcal{S}$--automorphism group.

\begin{thm}\label{meteq}
Let $(\ga(u)\to M,\om^{u})$ be a holonomy reduction of $(P^1M\to M,\om)$ such that $Hol(u)\subset O(n)$ for some $u\in P^1M$. Then two Riemannian metrics corresponding to $\exp(Y_1),\exp(Y_2)\in Z_{\exp(S^2\mathbb{R}^n)}(Hol(u))$ are isomorphic if and only if there is an $(\Ad(\exp(Y_1)k\exp(-Y_2)),\conj(\exp(Y_1)k\exp(-Y_2)))$--automorphism of $(\ga(u)\to M,\om^{u})$ for some $k\in N_{O(n)}(Hol(u))$.

Moreover, for each $\exp(Y)\in Z_{\exp(S^2\mathbb{R}^n)}(Hol(u))$ and each $p\in N_{Gl(n)}(Hol(u))$ there is unique $\exp(Y_p)\in Z_{\exp(S^2\mathbb{R}^n)}(Hol(u))$ such that $p=\exp(Y)k\exp(-Y_p)$ holds for some $k\in N_{O(n)}(Hol(u))$. In particular, the assignment $$\tau(\Phi)(\exp(Y)):=\exp(Y_p)$$ for $(\Ad(p),\conj(p))$--automorphism $\Phi$ of $(\ga(u)\to M,\om^{u})$ is a right action of the Lie group $\mathcal{S}^{Hol(u)}\Aut(\ga(u),\om^u)$ on $Z_{\exp(S^2\mathbb{R}^n)}(Hol(u))$ and the space of non--equivalent Riemanian metrics on $M$ such that the connection forms of their Levi--Civita connections coincide with $\gamma$ is in bijection with the orbit space $$Z_{\exp(S^2\mathbb{R}^n)}(Hol(u))/\tau(\mathcal{S}^{Hol(u)}\Aut(\ga(u),\om^u)).$$
\end{thm}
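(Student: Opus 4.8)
\emph{Proof proposal.} The plan is to reduce everything to statements about the Cartan geometry $(P^1M\to M,\om)$ and the way its automorphisms move the $O(n)$--reductions that encode the metrics, and then feed that into Theorem \ref{autext}, Lemma \ref{5,3}, Corollary \ref{homtoaff} and Lemma \ref{reddif}. Write $Q_Y:=\ga(u\exp(Y))\cdot O(n)=\ga(u)\exp(Y)O(n)\subset P^1M$ for the $O(n)$--subbundle attached to $\exp(Y)\in Z_{\exp(S^2\mathbb{R}^n)}(Hol(u))$; by the discussion preceding Lemma \ref{reddif} this $Q_Y$ carries the Riemannian metric $g_Y$ whose Levi--Civita connection is $\gamma$, and $\exp(Y)\mapsto g_Y$ is the bijection of Lemma \ref{reddif}. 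The first step is to observe that a diffeomorphism $f\colon M\to M$ is an isometry $g_{Y_1}\to g_{Y_2}$ if and only if $P^1f\in\Aut(P^1M,\om)$ (indeed $P^1f$ always preserves the soldering form $\theta$, and preserves $\gamma$ by Corollary \ref{homtoaff}, both metrics having Levi--Civita connection $\gamma$) and $P^1f(Q_{Y_1})=Q_{Y_2}$; conversely, restricting any $\Phi\in\Aut(P^1M,\om)$ with $\Phi(Q_{Y_1})=Q_{Y_2}$ gives such an isometry. Hence $g_{Y_1}\cong g_{Y_2}$ exactly when some $\Phi\in\Aut(P^1M,\om)$ carries $Q_{Y_1}$ to $Q_{Y_2}$.

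Next I would make this condition explicit. For $\Phi\in\Aut(P^1M,\om)$ Lemma \ref{5,3} produces $p\in N_{Gl(n)}(Hol(u))$ with $\Phi(\ga(u))=\ga(u)p$, well defined modulo $Hol(u)$, so that $r^{p^{-1}}\circ\Phi$ is an $(\Ad(p),\conj(p))$--automorphism of $(\ga(u)\to M,\om^{u})$; by $Gl(n)$--equivariance $\Phi(Q_{Y_1})=\ga(u)p\exp(Y_1)O(n)$. Using $\ga(u)Hol(u)=\ga(u)$ and the criterion $\ga(u)aO(n)=\ga(u)bO(n)\iff b\in Hol(u)aO(n)$, one finds $\Phi(Q_{Y_1})=Q_{Y_2}$ for some such $\Phi$ precisely when $p$ can be represented as $\exp(Y_2)k\exp(-Y_1)$ with $k\in O(n)$, and since $p$ normalizes $Hol(u)$ while $\exp(\pm Y_i)$ centralize it, necessarily $k\in N_{O(n)}(Hol(u))$. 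Passing to $\Phi^{-1}$, whose datum is $p^{-1}=\exp(Y_1)k^{-1}\exp(-Y_2)$, and using Theorem \ref{autext}/Theorem \ref{effective} to identify such $\Phi$ with $(\Ad(\cdot),\conj(\cdot))$--automorphisms of $(\ga(u)\to M,\om^u)$ modulo the trivial ones $r^h$, yields the first assertion.

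For the ``moreover'' part I would first prove, for fixed $\exp(Y)\in Z_{\exp(S^2\mathbb{R}^n)}(Hol(u))$ and arbitrary $p\in N_{Gl(n)}(Hol(u))$, the existence and uniqueness of $\exp(Y_p)$ purely algebraically. Put $g:=\exp(-Y)p\in N_{Gl(n)}(Hol(u))$ and take its polar decomposition $g=kb$, $k\in O(n)$, $b$ positive--definite symmetric. The Cartan involution $\theta$ fixes $O(n)\supset Hol(u)$, hence preserves $N_{Gl(n)}(Hol(u))$, so $b^{2}=\theta(g)^{-1}g\in N_{Gl(n)}(Hol(u))$, i.e.\ $b^{2}hb^{-2}\in Hol(u)\subset O(n)$ for all $h\in Hol(u)$. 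The one genuinely new computation is that for $h\in O(n)$ and $b$ positive--definite symmetric the element $b^{2}hb^{-2}$ is orthogonal if and only if $[b,h]=0$ (expand $(b^{2}hb^{-2})(b^{2}hb^{-2})^{T}=I$ with $h^{T}=h^{-1}$ and compare the eigenspaces of $b^{4}$ and of $b$). Thus $b\in Z_{Gl(n)}(Hol(u))$, so $\exp(Y_p):=b^{-1}\in Z_{\exp(S^2\mathbb{R}^n)}(Hol(u))$ and $k=gb^{-1}\in N_{O(n)}(Hol(u))$, giving $p=\exp(Y)k\exp(-Y_p)$; uniqueness of $(k,b)$, hence of $Y_p$, is uniqueness of the polar decomposition, since $k\exp(-Y_p)$ is already of the required form. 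That $\tau$ is a right action then follows from the composition law for the datum: if $\Phi\leftrightarrow p$ and $\Phi'\leftrightarrow p'$ via Lemma \ref{5,3}, then $\Phi\circ\Phi'\leftrightarrow pp'$, and from $p=\exp(Y)k_{1}\exp(-Y_p)$, $p'=\exp(Y_p)k_{2}\exp(-Y_{p'})$ one gets $pp'=\exp(Y)k_{1}k_{2}\exp(-Y_{p'})$, so uniqueness forces $\tau(\Phi\circ\Phi')(\exp(Y))=\tau(\Phi')(\tau(\Phi)(\exp(Y)))$; well--definedness on $\mathcal{S}^{Hol(u)}\Aut(\ga(u),\om^{u})/\{r^{h}\}$ is the same computation with $p$ replaced by $ph$, $h\in Hol(u)$, using that $\exp(-Y_p)$ commutes with $Hol(u)$. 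Finally, combining the first assertion with the definition of $\tau$ shows $g_{Y_1}\cong g_{Y_2}$ iff $\exp(Y_1)$ and $\exp(Y_2)$ lie in one $\tau$--orbit, and Lemma \ref{reddif} identifies all such metrics with $Z_{\exp(S^2\mathbb{R}^n)}(Hol(u))$, giving the claimed orbit space.

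The step I expect to be the main obstacle is the bookkeeping in the second paragraph: keeping straight the several conventions at once --- which of $Q_{Y_1},Q_{Y_2}$ is the source, the direction of the isometry versus of the automorphism, the passage between $\Aut(P^1M,\om)$, its datum $p\in N_{Gl(n)}(Hol(u))$ and $\mathcal{S}^{Hol(u)}$--automorphisms of the holonomy reduction via $r^{p^{-1}}$, and the quotient by the trivial automorphisms $r^{h}$ --- so that the explicit shape $\exp(Y_1)k\exp(-Y_2)$ comes out exactly as stated; the polar--decomposition lemma in the third paragraph is short, but it is the only computation that is not essentially routine.
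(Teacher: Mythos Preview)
Your proposal is correct and follows essentially the same route as the paper: both arguments (i) identify isometries between $g_{Y_1}$ and $g_{Y_2}$ with automorphisms of $(P^1M,\om)$ that move one $O(n)$--reduction to the other, (ii) use Lemma \ref{5,3}/Theorem \ref{autext} to translate this into the existence of an $(\Ad(p),\conj(p))$--automorphism of the holonomy reduction with $p$ of the shape $\exp(Y_1)k\exp(-Y_2)$, and (iii) use the polar (global Cartan) decomposition together with the centralizer argument to produce $\exp(Y_p)$ and check that $\tau$ is a right action. The only noticeable difference is expository: the paper dispatches the key step ``$p=\exp(Y')k\in N_{Gl(n)}(Hol(u))\Rightarrow \exp(Y')\in Z_{\exp(S^2\mathbb{R}^n)}(Hol(u))$'' by a pointer to the proof of Lemma \ref{reddif}, whereas you spell it out as the lemma ``$b^{2}hb^{-2}\in O(n)$ for $h\in O(n)$ and $b$ positive--definite symmetric $\Leftrightarrow [b,h]=0$'', which is exactly the same Cartan--involution/diagonalizability computation unpacked.
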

\begin{proof}
It is clear that $\exp(Y_1)k\exp(-Y_2)\in N_{Gl(n)}(Hol(u))$ holds for $\exp(Y_1),\exp(Y_2)\in Z_{\exp(S^2\mathbb{R}^n)}(Hol(u))$ and $k\in N_{O(n)}(Hol(u))$ and thus $$(\Ad(\exp(Y_1)k\exp(-Y_2)),\conj(\exp(Y_1)k\exp(-Y_2)))\in \mathcal{S}^{Hol(u)}.$$ We know from the Theorem \ref{autext} that the $(\Ad(\exp(Y_1)k\exp(-Y_2)),\conj(k))$--automorphisms of $(\ga(u)\to M,\om^{u})$ are in bijective correspondence with automorphisms of $(P^1M\to M,\om)$ mapping $\ga(u)$ on $\ga(u\exp(Y_2)k^{-1}\exp(-Y_1))$.

Thus if there is an $(\Ad(\exp(Y_1)k\exp(-Y_2)),\conj(k))$--automorphism, then the corresponding automorphisms of $(P^1M\to M,\om)$ maps $\ga(u\exp(Y_1))$ on $\ga(u\exp(Y_2)k^{-1})$ and the Riemannian metrics corresponding to $\exp(Y_1),\exp(Y_2)$ are isomorphic.

Conversely, if the Riemannian metrics corresponding to $\exp(Y_1),\exp(Y_2)$ are isomorphic, then the corresponding automorphisms of $(P^1M\to M,\om)$ maps $\ga(u\exp(Y_1))$ on $\ga(u\exp(Y_2)k^{-1})$ for some $k\in O(n)$ such that $\exp(Y_1)k\exp(-Y_2)\in N_{Gl(n)}(Hol(u))$ and $k\in N_{O(n)}(Hol(u))$, because $\conj(\exp(Y_1)k\exp(-Y_2))(Hol(u))=Hol(u)$ implies $\conj(k)(Hol(u))=Hol(u)$.

In general, if $p=\exp(Y_2)k\in N_{Gl(n)}(Hol(u))$, then $\exp(Y_2)\in Z_{\exp(S^2\mathbb{R}^n)}(Hol(u))$ follows as in the proof of Lemma \ref{reddif} and thus $k\in N_{O(n)}(Hol(u))$. Therefore $\exp(Y_p)$ is uniquely given by the analogous decomposition $p^{-1}\exp(Y)=\exp(Y_p)k^{-1}$. Moreover, $$\exp((Y_{p_1})_{p_2})=p_2^{-1}\exp(Y_{p_1})k_2=p_2^{-1}p_1^{-1}\exp(Y)k_1k_2=\exp(Y_{p_1p_2})$$
holds and thus the map $\tau$ is composition of the group homomorphism $$\mathcal{S}^{Hol(u)}\Aut(\ga(u),\om^u)\to \mathcal{S}^{Hol(u)}$$ assigning to $(\alpha,i)$--automorphism the extension $(\alpha,i)$ and the right action $Y\mapsto Y_p$.
\end{proof}

Let us point out that the trivial automorphisms in $\mathcal{S}^{Hol(u)}\Aut(\ga(u),\om^u)$ are in the kernel of $\tau$ and therefore it is enough to compute the group $\Aut(\mathcal{F}_{eff}\circ \mathcal{F}_{\mathcal{S}^{Hol(u)}}(\ga(u)\to M,\om^u))$ for the Cartan geometry $(\ga(u)\to M,\om^{u})$ of type $(\mathbb{R}^n\oplus \frak{hol}(u),Hol(u),\Ad)$. We obtain the following claim as a corollary of the construction of the skeleton $(\mathbb{R}^n\oplus \fn_{Gl(n)}(Hol(u)),N_{Gl(n)}(Hol(u)),\Ad)$ and the Theorem \ref{autcomp}.

\begin{cor}\label{explcomp}
\begin{enumerate}
\item The extension corresponding to the extension functor $\mathcal{F}_{eff}\circ \mathcal{F}_{\mathcal{S}^{Hol(u)}}$ is induced by the inclusion of $\mathbb{R}^n\oplus \frak{hol}(u)$ to $\mathbb{R}^n\oplus \frak{s}^{Hol(u)}$ and the linear connection $\nabla^A$ from the Theorem \ref{autcomp} is the adjoint tractor connection for the Cartan geometry $\mathcal{F}_{eff}\circ \mathcal{F}_{\mathcal{S}^{Hol(u)}}(\ga(u)\to M,\om^{u})$ of type $(\mathbb{R}^n\oplus \fn_{Gl(n)}(Hol(u)),N_{Gl(n)}(Hol(u)),\Ad)$.
\item Infinitesimal automorphisms of $\mathcal{F}_{eff}\circ \mathcal{F}_{\mathcal{S}^{Hol(u)}}(\ga(u)\to M,\om^{u})$ are sections $s$ of $\ga(u)\times_{Hol(u)}(\mathbb{R}^n\oplus \fn_{Gl(n)}(Hol(u)))$ such that
$$(\nabla^A s)_{\mathbb{R}^n}=0$$
and
$$(\nabla^A s)_{\fn_{Gl(n)}(Hol(u))}=-ins_{s_{\mathbb{R}^n}}\circ \kappa$$
hold.
\item If $s$ is a complete infinitesimal automorphism of $\mathcal{F}_{eff}\circ \mathcal{F}_{\mathcal{S}^{Hol(u)}}(\ga(u)\to M,\om^{u})$, then the metrics $\exp(Y)\in Z_{\exp(S^2\mathbb{R}^n)}(Hol(u))$ and $\tau(\exp(w.s_{\fn_{Gl(n)}(Hol(u))})(\exp(Y))$ are equivalent for all $w\in \mathbb{R}$.
\end{enumerate}
\end{cor}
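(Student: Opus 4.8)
The plan is to extract all three parts from the description of $(\mathbb{R}^n\oplus\fn_{Gl(n)}(Hol(u)),N_{Gl(n)}(Hol(u)),\Ad)$ as the effective quotient of the skeleton $(\mathbb{R}^n\oplus\frak{hol}(u)\oplus\fs^{Hol(u)},Hol(u)\rtimes_{pr_2}\mathcal{S}^{Hol(u)},(\Ad)_{\mathcal{S}^{Hol(u)}})$ obtained just above, combined with Theorems \ref{autcomp}, \ref{autext}, \ref{effective} and \ref{meteq}.

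For part (1) I would first note that, by Proposition \ref{extdescr}, $\mathcal{F}_{\mathcal{S}^{Hol(u)}}$ is the extension functor of the inclusions $\mathbb{R}^n\oplus\frak{hol}(u)\hookrightarrow\mathbb{R}^n\oplus\frak{hol}(u)\oplus\fs^{Hol(u)}$ and $Hol(u)\hookrightarrow Hol(u)\rtimes_{pr_2}\mathcal{S}^{Hol(u)}$, while $\mathcal{F}_{eff}$ is the one of the projection to the effective quotient; composing and using the identification $(h,\Ad(p),\conj(p))\mapsto hp$, the composite $\mathcal{F}_{eff}\circ\mathcal{F}_{\mathcal{S}^{Hol(u)}}$ becomes the extension functor of the inclusion $\mathbb{R}^n\rtimes Hol(u)\hookrightarrow\mathbb{R}^n\rtimes N_{Gl(n)}(Hol(u))$, which is the first assertion. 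For the connection I would specialize Theorem \ref{autcomp} to $(\fg,H,\Ad_G|_H)=(\fk,L,\rho)=(\mathbb{R}^n\oplus\frak{hol}(u),Hol(u),\Ad)$ and $\alpha=\id$, choose the decomposition with $s_\fl=0$, and insert the explicit formula for $d\rho_{\mathcal{S}^{Hol(u)}}$ into $A(t_\fg)(s)=-\ad_\fg(s_\fg)(t_\fg)-d\rho_{\mathcal{S}^{Hol(u)}}(s_\fs)(t_\fg)$; after projecting $s$ to the effective quotient this collapses to $A(t_\fg)(s)=[t_\fg,s]$, the bracket being taken in $\mathbb{R}^n\oplus\fn_{Gl(n)}(Hol(u))$ --- the $\fn_{\mathcal{S}^{Hol(u)}}$--direction, which pairs $A\in\frak{hol}(u)$ with $-A\in\fn_{Gl(n)}(Hol(u))$, is killed by the quotient map, so the expression descends. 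Hence $(A,\rho_{\mathcal{S}^{Hol(u)}}\circ i)$ descends to the morphism $(\ad,\Ad)$ of $(\mathbb{R}^n\oplus\frak{hol}(u),Hol(u),\Ad)$ to $(\frak{gl}(\fm),Gl(\fm),\Ad)$ with $\fm=\mathbb{R}^n\oplus\fn_{Gl(n)}(Hol(u))$, i.e.\ to the morphism producing the adjoint tractor connection via Theorem \ref{cartoprinc}(3); thus $\nabla^A$ descends to the adjoint tractor connection of $\mathcal{F}_{eff}\circ\mathcal{F}_{\mathcal{S}^{Hol(u)}}(\ga(u)\to M,\om^u)$.

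For part (2) I would just read off Theorem \ref{autcomp} in the same specialization (passing to the effective quotient as in Theorem \ref{effective}): infinitesimal automorphisms are the sections $s$ with $(\nabla^A s)_\fk=-\alpha\circ ins_{s_\fg}\circ\kappa=-ins_{s_{\mathbb{R}^n}}\circ\kappa$ and $(\nabla^A s)_\fs=0$, where $\kappa\colon\ga(u)\to\wedge^2(\mathbb{R}^n)^*\otimes(\mathbb{R}^n\oplus\frak{hol}(u))$ is the curvature of $(\ga(u),\om^u)$. Because $\mathcal{F}_{(d\iota,\iota|_{Hol(u)})}$ does not alter the $\mathbb{R}^n$--part of the Cartan connection and $(P^1M,\om)$ satisfies $T=0$, also $(\ga(u),\om^u)$ satisfies $T=0$, so $\kappa$ is $\frak{hol}(u)$--valued; therefore $-ins_{s_{\mathbb{R}^n}}\circ\kappa$ has vanishing $\mathbb{R}^n$--component, and rewriting the pair $(\cdot)_\fk,(\cdot)_\fs$ in the splitting $\mathbb{R}^n\oplus\fn_{Gl(n)}(Hol(u))$ of the adjoint tractor bundle (in which $\frak{hol}(u)\subset\fn_{Gl(n)}(Hol(u))$) turns the two conditions into $(\nabla^A s)_{\mathbb{R}^n}=0$ and $(\nabla^A s)_{\fn_{Gl(n)}(Hol(u))}=-ins_{s_{\mathbb{R}^n}}\circ\kappa$.

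For part (3) I would use the last assertion of Theorem \ref{autcomp}, namely that the $\fs^{Hol(u)}$--component of a complete infinitesimal automorphism $s$ is constant, say equal to $B_0\in\fn_{Gl(n)}(Hol(u))$ under $\fs^{Hol(u)}\cong\fn_{Gl(n)}(Hol(u))$; then, by the formula $(\Phi_w)^*(\om^u)=\exp(w\,d\rho_{\mathcal{S}^{Hol(u)}}(s_\fs))\circ\om^u$ from the discussion preceding Theorem \ref{homcomp}, the flow $\Phi_w$ of $s$ corresponds under Theorem \ref{autext} to the one--parameter family of $\mathcal{S}^{Hol(u)}$--automorphisms of $(\ga(u),\om^u)$ that are $(\Ad(\exp(wB_0)),\conj(\exp(wB_0)))$--automorphisms, and Theorem \ref{meteq} applied to each such extension yields that $\exp(Y)$ and $\tau(\Phi_w)(\exp(Y))=\tau(\exp(w\cdot s_{\fn_{Gl(n)}(Hol(u))}))(\exp(Y))$ are equivalent for all $w\in\mathbb{R}$. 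I expect the genuine obstacle to lie in part (1): one must unwind carefully the three bookkeeping layers --- $\fs^{Hol(u)}\cong\fn_{Gl(n)}(Hol(u))$, the passage to the effective quotient, and the formula for $d\rho_{\mathcal{S}^{Hol(u)}}$ --- in order to recognise the a priori opaque operator $A$ of Theorem \ref{autcomp} as the adjoint representation of $\mathbb{R}^n\oplus\fn_{Gl(n)}(Hol(u))$; once this is settled, part (2) is an immediate specialization and part (3) is a routine translation through Theorem \ref{meteq}.
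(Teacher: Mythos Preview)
Your proposal is correct and follows the same approach the paper indicates: the paper states that the corollary follows ``from the construction of the skeleton $(\mathbb{R}^n\oplus \fn_{Gl(n)}(Hol(u)),N_{Gl(n)}(Hol(u)),\Ad)$ and Theorem \ref{autcomp}'' without spelling out details, and you have unpacked exactly those two ingredients. Your computation showing that $A(t_\fg)$ reduces, after the effective quotient, to $\ad(t_\fg)$ in $\mathbb{R}^n\oplus\fn_{Gl(n)}(Hol(u))$ is the key verification for part (1), and your use of $T=0$ to place $\kappa$ in $\frak{hol}(u)\subset\fn_{Gl(n)}(Hol(u))$ is precisely what is needed to pass from the $(\fk,\fs)$--splitting of Theorem \ref{autcomp} to the $(\mathbb{R}^n,\fn_{Gl(n)}(Hol(u)))$--splitting in part (2).
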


Let us compute one non--trivial example.

\begin{exam}
Let us choose $G=SO(3)\times \mathbb{R}^+$ and $H=SO(2)\times \{1\}$. Then $(\alpha,i)$ given by formulas
$$i( \begin{pmatrix} cos(t) & -sin(t)\cr sin(t) & cos(t)\end{pmatrix}\times \{1\})=\begin{pmatrix}1 & 0 & 0 & 0\cr 0  & cos(t) & -sin(t) & 0\cr 0 & sin(t) & cos(t) & 0\cr 0 & 0 & 0 & 1\end{pmatrix}$$
$$\alpha(\begin{pmatrix}0 & -a & -b\cr a & 0 & -c\cr b & c & 0\end{pmatrix}\oplus \{x\})=\begin{pmatrix}0 & 0 & 0 & 0\cr a & 0 & -c & 0\cr b & c & 0 & 0\cr x & 0 & 0 & 0\end{pmatrix}$$
is an extension of $(\frak{so}(3)\oplus \mathbb{R},SO(2)\times\{1\},\Ad)$ to $(\mathbb{R}^3\oplus \frak{gl}(3),Gl(3),\Ad)$. 

Clearly, $Hol(e,1)=SO(2)\subset O(3)$ holds for the holonomy group of the homogeneous Cartan geometry $\mathcal{F}_{(\alpha,i)}(G\to G/H,\om_G)$ of type $(\mathbb{R}^3\oplus \frak{gl}(3),Gl(3),\Ad)$ at the point $\{e\}\times \{1\}$. Moreover, the extension $(\alpha,i)$ restricts to an extension of $(\frak{so}(3)\oplus \mathbb{R},SO(2)\times\{1\},\Ad)$ to $(\mathbb{R}^3\oplus \frak{hol}(e,1),Hol(e,1),\Ad)$ and provides the holonomy reduction.

The normalizer $N_{Gl(3)}(Hol(e,1))$ can be computed as $$N_{Gl(3)}(Hol(e,1))=\begin{pmatrix}e^{m_1}cos(t) & -e^{m_1}sin(t) & 0\cr e^{m_1}sin(t) & e^{m_1}cos(t) & 0\cr 0 & 0 & e^{m_2}\end{pmatrix}$$ and the set of Riemannian metrics on $G/H$ sharing linear connection given by extension $(\alpha,i)$ is according to the Lemma \ref{reddif} the set
$$Z_{\exp(S^2\mathbb{R}^3)}(Hol(e,1))=\begin{pmatrix} e^{m_1} & 0 & 0\cr 0 & e^{m_1} & 0\cr  0 & 0 & e^{m_2}\end{pmatrix}.$$

We compute the group $\Aut(\mathcal{F}_{eff}\circ \mathcal{F}_{\mathcal{S}(Hol(e,1))}(G\to G/H,\om_G))$ and determine, which metrics in $Z_{\exp(S^2\mathbb{R}^3}(Hol(e,1))$ are equivalent. Since we are on a homogeneous Cartan geometry, we can use the Theorem \ref{homcomp} for the following map $A: \fg\to Gl(\mathbb{R}^3\oplus \fn_{Gl(3)}(Hol(e,1)))$
\begin{align*}
A(\begin{pmatrix}0 & -a & -b\cr a & 0 & -c\cr b & c & 0\end{pmatrix}\oplus \{x\})(\begin{pmatrix}0 & 0 & 0 & 0\cr a' & m_1' & -c' & 0\cr b' & c' & m_1' & 0\cr x' & 0 & 0 & m_2'\end{pmatrix}):=\\
\begin{pmatrix}0 & 0 & 0 & 0\cr -a\,m_1'+b\,c'-c\,b' & 0 & b\,a'-a\,b' & 0\cr -b\,m_1'-a\,c'+c\,a' & a\,b'-b\,a' & 0 & 0\cr -x\,m_2' & 0 & 0 & 0\end{pmatrix}
\end{align*}
and we can solve that the local infinitesimal automorphisms of $\Aut(\mathcal{F}_{eff}\circ \mathcal{F}_{\mathcal{S}(Hol(e,1))}(G\to G/H,\om_G))$ are given by the set $$\begin{pmatrix}0 & 0 & 0 & 0\cr a' & 0 & -c' & 0\cr b' & c' & 0 & 0\cr x' & 0 & 0 & m_2'\end{pmatrix} \subset \mathbb{R}^3\oplus \fn_{Gl(3)}(Hol(e,1)).$$
However, since the elements of $N_{Gl(3)}Hol(e,1)$ preserve the image of $\alpha$, we can use directly the Corollary \ref{autcor} to compute that $\Aut(\mathcal{F}_{eff}\circ \mathcal{F}_{\mathcal{S}(Hol(e,e))}(G\to G/H,\om_G))=SO(3)\times (\mathbb{R}\rtimes Gl^+(1))$.

Thus we can conclude from Theorem \ref{meteq} that the set of non--equivalent Riemannian metrics on $G/H$ sharing the linear connection given by the extension $(\alpha,i)$ is the set
$$\begin{pmatrix} e^{m_1} & 0 & 0\cr 0 & e^{m_1} & 0\cr  0 & 0 & 1\end{pmatrix}.$$
\end{exam}

\end{document}